\begin{document}

\newtheorem{defi}{Definition}
\newtheorem{theo}{Theorem}
\newtheorem{lem}{Lemma}
\newtheorem{rem}{Remark}
\newcommand{\ov}[1]{\overline{#1}}
\newcommand{\un}[1]{\underline{#1}}
\newcommand{\bsym}[1]{\boldsymbol{#1}}
\newcommand{\n}[1]{\lVert#1\rVert}
\newcommand{\eq}[1]{\begin{align}#1\end{align}}

\providecommand{\bs}{\begin{subequations}}
\providecommand{\es}{\end{subequations}}

\title{A model of morphogen transport\\ in the presence of glypicans II}
\author{Marcin Ma\l ogrosz
\footnote{Institute of Applied Mathematics and Mechanics, University of Warsaw, Banacha 2, 02-097 Warsaw, Poland (malogrosz@mimuw.edu.pl)}}
\date{}

  \maketitle

\begin{abstract}
\noindent
The model of morphogen transport introduced by Hufnagel et al. consisting of two evolutionary PDEs of reaction-diffusion type and three ODEs posed on a rectangular domain $(-L,L)\times (0,\epsilon H)$ is analysed. We prove that the problem is globally well-posed and that the corresponding solutions converge as $\epsilon$ tends to $0$ to the unique solution of the one dimensional (i.e. posed on $(-L,L)$) system, which was analysed in previous paper. Main difficulties in the analysis stem from the presence of a singular source term - a Dirac Delta, combined with no smoothing effect in the ODE part of the system.  
\end{abstract}

\textbf{AMS classification} 35B40, 35Q92.

\textbf{Keywords} dimension reduction, morphogen transport, reaction-diffusion equations.




\section{Introduction}

According to the French Flag Model, created in the late sixties by Lewis Wolpert (see \cite{Wol}), morphogen are molecules which due to mechanism of positional signalling govern the fate of cells in living organisms. It has been observed that certain proteins (and other substances) after being secreted from a source, typically a group of cells, spread through the tissue and after a certain amount of time form a stable gradient of concentration. Next receptors located on the surfaces of the cells detect levels of morfogen concentration and transmit these information to the nucleus. This leads to the activation of appropriate genes, synthesis of proteins and finally differentiation of cells. 

Although the role of morphogen in cell differentiation, as described above, is commonly accepted there is still discussion regarding the exact kinetic mechanism of the movement of morphogen molecules and the role of reactions of morphogen with receptors in forming the gradient of concentration (see \cite{GB},\cite{KPBKBJG-G},\cite{KW}).  To determine the mechanism of morphogen transport, several mathematical models consisting of systems of semilinear parabolic PDEs of reaction diffusion-type coupled with ODEs were recently proposed and analysed (see \cite{KLW1},\cite{KLW2},\cite{Tel},\cite{Mal1},\cite{STW}). 

In this series of papers we analyse model \textbf{[HKCS]}, introduced by Hufnagel et al. in\cite{Huf}, which describes the formation of the gradient of morphogen Wingless (Wg) in the imaginal wing disc of the Drosophila Melanogaster individual. Model \textbf{[HKCS]} has two counterparts - one and two dimensional, depending on the dimensionality of the domain representing the imaginal wing disc. We denote these models \textbf{[HKCS].1D} and \textbf{[HKCS].2D} respectively. In mathematical terms \textbf{[HKCS].1D} is a system of two semilinear parabolic PDEs of reaction diffusion type coupled with three nonlinear ODEs posed on the interval $I^L=(-L,L)$, while \textbf{[HKCS].2D} consists of a linear parabolic PDE posed on rectangle $\Omega^{L,H}=(-L,L)\times(0,H)$ which is coupled via nonlinear boundary condition on $\partial_1{\Omega^{L,H}}=(-L,L)\times\{0\}$ with a semilinear parabolic PDE and three ODEs. 

In \cite{Mal2} we have shown that \textbf{[HKCS].1D} is globally well posed and has a unique stationary solution. In this paper we turn our attention to the analysis of the \textbf{[HKCS].2D} model. Using analytic semigroup theory we prove its global well-posedness in appropriately chosen function setting and justify rigourously that \textbf{[HKCS].1D} can be obtained from \textbf{[HKCS].2D} through "ironing of the wing disc" - i.e. dimension reduction of the domain in the direction perpendicular to the surface of the wing disc. The main analytic problem which we have to overcome stems from two factors: the lack of smoothing effect in the ODEs and the presence of a point source term (a Dirac Delta) in the boundary condition for the equation posed on $(-L,L)\times(0,H)$, which causes the solution to be unbounded for every $t>0$. To overcome this difficulty a new notion of solution is proposed in  Section \ref{H2E:secM-mild}. Stationary problem for the \textbf{[HKCS].2D} is analysed in \cite{Mal4}.  

\subsection{The [HKCS].2D model.}

In this section we present the model \textbf{[HKCS].2D} - a two dimensional counterpart of the \textbf{[HKCS]} model introduced in $\cite{Huf}$. For the presentation and analysis of \textbf{[HKCS].1D} - a one dimensional counterpart we refer to \cite{Mal2}. 

For $L,H>0, \ \infty\geq T>0$, denote 
\begin{align*}
I^L&=(-L,L), \ I^1=(-1,1),\\ 
\Omega^{L,H}&=(-L,L)\times(0,H), \ \partial \Omega^{L,H}=\partial_1\Omega^{L,H}\cup\partial_0\Omega^{L,H}, \ \partial_1\Omega^{L,H}=(-L,L)\times\{0\}, \\ \Omega^{L,H}_T&=(0,T)\times \Omega^{L,H}, \ (\partial \Omega^{L,H})_T=(0,T)\times\partial \Omega^{L,H}, \ \Omega=\Omega^{1,1}.
\end{align*}
The domain $\Omega^{L,H}$ represents the imaginal wing disc of the Drosophila Melanogaster individual and the $x_2$ direction corresponds to the thickness of the disc, so that in practice $H\ll L$.
Let $\nu$ denote a unit outer normal vector to $\partial\Omega^{L,H}$ and let $\delta$ be a one dimensional Dirac Delta located at $x=0$ (that is $\delta(\phi)=\phi(0)$ for any $\phi\in C([-L,L])$).

\begin{figure}[h]
\begin{center}
\includegraphics[scale=2, trim=0 75 0 20]{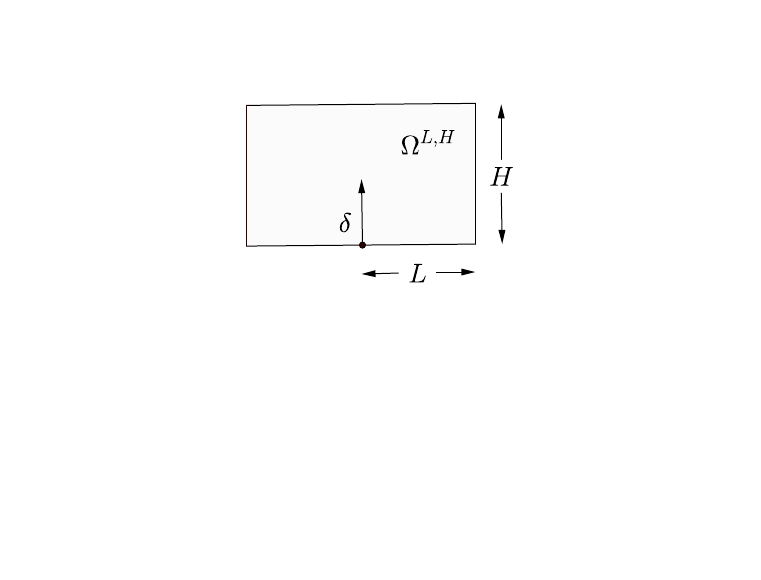}
\end{center}
\caption{Graph of the domain $\Omega^{L,H}$. The arrow pointing towards the rectangle represents a point source of the morphogen (a Dirac Delta) on the boundary.} 
\label{H2E:wykres}
\end{figure}
\textbf{[HKCS].2D}\\
 is a system which consists of one evolutionary PDE posed on $\Omega^{L,H}$, one evolutionary PDE and 3 ODE's posed on $\partial_1\Omega^{L,H}$:
\bs\label{H2E:Model}
\eq{
\partial_tW-D\Delta W&=-\gamma W, &&(t,x)\in \Omega^{L,H}_{T}\\
\partial_tW^*-D^*\partial^2_{x_1}W^*&=-\gamma^* W^*+[kGW-k'W^*]-[k_{Rg}RW^*-k_{Rg}'R_g^*], &&(t,x)\in (\partial_1\Omega^{L,H})_{T}\\
\partial_tR&= -[k_RRW-k_R'R^*]-[k_{Rg}RW^*-k_{Rg}'R_g^*]-\alpha R+\Gamma, &&(t,x)\in (\partial_1\Omega^{L,H})_{T}\\
\partial_tR^*&=[k_RRW-k_R'R^*] -\alpha^*R^*, &&(t,x)\in (\partial_1\Omega^{L,H})_{T}\\
\partial_tR_{g}^*&=[k_{Rg}RW^*-k_{Rg}'R_g^*]-\alpha^*R_{g}^*, &&(t,x)\in (\partial_1\Omega^{L,H})_{T}
}
\es

supplemented by the boundary conditions:
\bs\label{H2E:ModelBCD}
\eq{
D\nabla W\nu&=0, &&(t,x)\in(\partial_0\Omega^{L,H})_{T}\label{H2E:ModelBCD1}\\
D\nabla W\nu&=- [kGW-k'W^*] - [k_RRW-k_R'R^*] + s\delta,&&(t,x)\in(\partial_1\Omega^{L,H})_{T}\label{H2E:ModelBCD2}\\
\partial_{x_1}W^*&=0, &&(t,x)\in(\partial\partial_1\Omega^{L,H})_{T}\label{H2E:ModelBCD3}
}
\es
and initial conditions:
\bs\label{H2E:ModelINI}
\eq{
W(0)&=W_0, && x\in \Omega^{L,H}\\ 
W^*(0)&=W^*_0,\  R(0)=R_0,\ R^*(0)=R^*_0,\ R_g^*(0)=R_{g0}^*,  && x\in \partial_1\Omega^{L,H}
}
\es

In \eqref{H2E:Model},\eqref{H2E:ModelBCD},\eqref{H2E:ModelINI} $W, G, R$ denote concentrations of free morphogens Wg, 
free glypicans Dlp and free receptors,\\ $W^*, R^*$ denote concentrations of morphogen-glypican
and morphogen-receptor complexes,\\ $R_g^*$ denotes concentration of morphogen-glypican-receptor complexes. It is assumed that $W$ is located on $\Omega^{L,H}$, while other substances are present only on $\partial_1\Omega^{L,H}$.
The model takes into account association-dissociation mechanism of
\begin{itemize}
\item $W$ and $G$ with rates $k, k': \ kGW-k'W^*$.
\item $W$ and $R$ with rates $k_R, k_R': \ k_RRW-k_R'R^*$.
\item $W^*$ and $R$ with rates $k_{Rg}, k_{Rg}': \ k_{Rg}RW^*-k_{Rg}'R_g^*$.
\end{itemize}

Other terms of the system account for
\begin{itemize}
\item diffusion of $W$ in $\Omega^{L,H}$ (resp. $W^*$ on $\partial_1\Omega^{L,H}$) with rate $D$ (resp $D^*$): $-D\Delta W$ (resp. $-D^*\partial^2_{x_1}W^*$).
\item Degradation of $W$ in $\Omega^{L,H}$ (resp. $W^*$ on $\partial_1\Omega^{L,H}$) with rate $\gamma$ (resp. $\gamma^*$): $-\gamma W$ (resp. $-\gamma^* W^*$).
\item Endocytosis of $R$ (resp. $R^*,R_g^*$) with rate $\alpha$ (resp. $\alpha^*,\alpha^*$) : $-\alpha R$ (resp. $-\alpha^* R^*, -\alpha^* R_g^*$).
\item Secretion of $W$ with rate $s$ from the point source localised at $x=0\in\partial_1\Omega^{L,H}$: $s\delta$.
\item Production of $R$: \ $\Gamma$.
\end{itemize}

For simplicity we assume that $G$ and $\Gamma$ are given and constant (in time and space).

In order to analyse the reduction of the dimension of the domain we introduce for $\epsilon>0$ the \textbf{[HKCS].(2D,$\epsilon$)} model, which is obtained from \textbf{[HKCS].2D} by changing $\Omega^{L,H}$ into $\Omega^{L,\epsilon H}$ and rescaling the source in the boundary conditions \eqref{H2E:ModelBCD}:   

\textbf{[HKCS].(2D,$\epsilon$)}
\bs\label{H2E:Modele}
\eq{
\partial_tW^{\epsilon}-D\Delta W^{\epsilon}&=-\gamma W^{\epsilon}, &&(t,x)\in \Omega^{L,\epsilon H}_{T}\nonumber\\
\partial_tW^{*,\epsilon}-D^*\partial^2_{x_1}W^{*,\epsilon}&=-\gamma^* W^{*,\epsilon}+[kGW^{\epsilon}-k'W^{*,\epsilon}]-[k_{Rg}R^{\epsilon}W^{*,\epsilon}-k_{Rg}'R_g^{*,\epsilon}], &&(t,x)\in (\partial_1\Omega^{L,\epsilon H})_{T}\nonumber\\
\partial_tR^{\epsilon}&= -[k_RR^{\epsilon}W^{\epsilon}-k_R'R^{*,\epsilon}]-[k_{Rg}R^{\epsilon}W^{*,\epsilon}-k_{Rg}'R_g^{*,\epsilon}]-\alpha R^{\epsilon}+\Gamma, &&(t,x)\in (\partial_1\Omega^{L,\epsilon H})_{T}\nonumber\\
\partial_tR^{*,\epsilon}&=[k_RR^{\epsilon}W^{\epsilon}-k_R'R^{*,\epsilon}] -\alpha^*R^{*,\epsilon}, &&(t,x)\in (\partial_1\Omega^{L,\epsilon H})_{T}\nonumber\\
\partial_tR_{g}^{*,\epsilon}&=[k_{Rg}R^{\epsilon}W^{*,\epsilon}-k_{Rg}'R_g^{*,\epsilon}]-\alpha^*R_{g}^{*,\epsilon}, &&(t,x)\in (\partial_1\Omega^{L,\epsilon H})_{T}\nonumber
}
\es

with boundary conditions
\bs\label{H2E:ModeleBCD}
\eq{
\epsilon^{-1}D\nabla W^{\epsilon}\nu&=0, &&(t,x)\in(\partial_0\Omega^{L,\epsilon H})_{T}\nonumber\\
\epsilon^{-1}D\nabla W^{\epsilon}\nu&=- [kGW^{\epsilon}-k'W^{*,\epsilon}] - [k_RR^{\epsilon}W^{\epsilon}-k_R'R^{*,\epsilon}] + s\delta,&&(t,x)\in(\partial_1\Omega^{L,\epsilon H})_{T}\nonumber\\
\partial_{x_1}W^{*,\epsilon}&=0, &&(t,x)\in(\partial\partial_1\Omega^{L,\epsilon H})_{T}\nonumber
}
\es
and initial conditions
\begin{align*}
W^{\epsilon}(0)&=W_0^{\epsilon}, && x\in \Omega^{L,\epsilon H}\\ 
W^{*,\epsilon}(0)&=W^*_0,\ R^{\epsilon}(0)=R_0, \ R^{*,\epsilon}(0)=R^*_0,\ R_g^{*,\epsilon}(0)=R_{g0}^*, && x\in \partial_1\Omega^{L,\epsilon H},
\end{align*}
where $W_0^{\epsilon}(x_1,x_2)=W_0(x_1,x_2/\epsilon)$.\\

Observe that  \textbf{[HKCS].(2D,1)}=\textbf{[HKCS].2D}. Roughly speaking besides the well-posedness of \textbf{[HKCS].2D}, our main result is that 
\begin{align}
\lim_{\epsilon\to0^+}\textbf{[HKCS].(2D,$\epsilon$)}=\textbf{[HKCS].1D}\label{H2E:modellimit},
\end{align}
where \textbf{[HKCS].1D} was analysed in \cite{Mal2}. The precise meaning of the limit \eqref{H2E:modellimit} is given in Theorem \ref{H2E:Maintheorem2}.

\subsection{Nondimensionalisation}
Introduce the following nondimensional parameters:
\begin{align*}
&T=L^2/D, \ K_1=k_RT, \ K_2=k_RT/H, \ h=\epsilon H/L, \ d=D^*/D,\\
&\bsym{b}=(b_1,b_2,b_3,b_4,b_5)=(T\gamma,T\gamma^*,T\alpha,T\alpha^*,T\alpha^*),\\
&\bsym{c}=(c_1,c_2,c_3,c_4,c_5)=(TkG/H,Tk',Hk_{Rg}/k_R,Tk_R',Tk_{Rg}'),\\
&\bsym{p}=(p_1,p_2,p_3,p_4,p_5)=(K_2Ts,0,K_2T\Gamma,0,0).
\end{align*}
For $(t,x)=(t,x_1,x_2)\in\Omega_T=(0,T)\times(-1,1)\times(0,1)$ we define functions
\begin{align*}
&u_{1}^h(t,x_1,x_2)=K_1W^{\epsilon}(Tt,Lx_1,\epsilon Hx_2), \ u_{2}^h(t,x_1)=K_2W^{*,\epsilon}(Tt,Lx_1), \ u_{3}^h(t,x_1)=K_2R^{\epsilon}(Tt,Lx_1),\\
&u_{4}^h(t,x_1)=K_2R^{*,\epsilon}(Tt,Lx_1),\ u_{5}^h(t,x_1)=K_2R_g^{*,\epsilon}(Tt,Lx_1),\\
&\bsym{u}^h=(u_{1}^h,u_{2}^h,u_{3}^h,u_{4}^h,u_{5}^h),\\
&u_{01}(x_1,x_2)=K_1W_0^{\epsilon}(Lx_1,\epsilon Hx_2)=K_1W_0(Lx_1,Hx_2),\ u_{02}(x_1)=K_2W^*_0(Lx_1), \ u_{03}(x_1)=K_2R_0(Lx_1),\\
&u_{04}(x_1)=K_2R^*_0(Lx_1), \ u_{05}(x_1)=K_2R_{g0}^*(Lx_1),\\
&\bsym{u}_0=(u_{01},u_{02},u_{03},u_{04},u_{05}),
\end{align*}
then system \textbf{[HKCS].(2D,$\epsilon$)} rewritten in the nondimensional form reads
\bs\label{H2E:System}
\eq{
\partial_t u_{1}^h+div(J_h(u_{1}^h))&=-b_1u_{1}^h,&& (t,x)\in\Omega_{T}\label{H2E:SystemA}\\
\partial_t u_{2}^h-d\partial^2_{x_1} u_{2}^h&=c_1u_{1}^h-(b_2+c_2+c_3u_{3}^h)u_{2}^h+c_5u_{5}^h,&&(t,x)\in(\partial_1\Omega)_{T}\label{H2E:SystemB}\\
\partial_t u_{3}^h&=-(b_3+u_{1}^h+c_3u_{2}^h)u_{3}^h+c_4u_{4}^h+c_5u_{5}^h+p_3,&&(t,x)\in(\partial_1\Omega)_{T}\label{H2E:SystemC}\\
\partial_t u_{4}^h&=u_{1}^hu_{3}^h-(b_4+c_4)u_{4}^h,&&(t,x)\in(\partial_1\Omega)_{T}\label{H2E:SystemD}\\
\partial_t u_{5}^h&=c_3u_{2}^hu_{3}^h-(b_5+c_5)u_{5}^h,&&(t,x)\in(\partial_1\Omega)_{T}\label{H2E:SystemE}
}
\es
with boundary and initial conditions
\begin{align*}
-J_h(u_{1}^h)\nu&=0,&&(t,x)\in(\partial_0\Omega)_{T}\\
-J_h(u_{1}^h)\nu&=-(c_1+u_{3}^h)u_{1}^h+c_2u_{2}^h+c_4u_{4}^h+p_1\delta,&&(t,x)\in(\partial_1\Omega)_{T}\\
\partial_{x_1} u_{2}^h&=0,&&(t,x)\in(\partial\partial_1\Omega)_{T}\\
\boldsymbol{u}^h(0,\cdot)&=\boldsymbol{u}_0,
\end{align*}
where 
\begin{itemize}
\item $J_h(u)=-(\partial_{x_1}u,h^{-2}\partial_{x_2}u)$ denotes the flux of $u_1^h$,
\item  $\nu$ denotes the outer normal unit vector to $\partial\Omega$,
\item $\delta$ denotes a one dimensional Dirac Delta i.e $\delta(\phi)=\phi(0)$ for any $\phi\in C([-1,1])$.
\end{itemize}

From now on we impose the following natural assumptions on the signs of the constant parameters and (possibly nonconstant) initial condition
\begin{align}
d, \boldsymbol{b}>0,\  \boldsymbol{c},\boldsymbol{p}, \boldsymbol{u_0}\geq 0.\label{H2E:ass}
\end{align}

\subsection{Overview}

The paper is divided into five sections as follows

\tableofcontents

In Section \ref{H2E:secAnalytical} we introduce analytical tools which are used in the following sections. In Section \ref{H2E:secInequalities} we collect and prove several inequalities including a Gronwall type inequality which to our best knowlege didn't appear in the literature previously.  In Section \ref{H2E:secex} we recall the definition of $\mathcal{X}^{\alpha}$ solution and state the basic result concerning existence of $\mathcal{X}^{\alpha}$ solutions to systems of abstract ODEs in Banach spaces.  In Section \ref{H2E:secOperators} we develop theory of the $L_2$ realisation of the operator $-div(J_h(u))=[\partial^2_{x_1x_1}+h^{-2}\partial^2_{x_2x_2}]u$ with Neumann boundary condition on the rectangle $\Omega$. The main result of Section \ref{H2E:secOperators} is Lemma \ref{H2E:lemiron} concerning semigroup estimates used in the dimension reduction of System \eqref{H2E:System} (see Section \ref{H2E:secdimred}).

In section \ref{H2E:secregular} we study the well-posedness of the aproximation of System \ref{H2E:System} which is obtained by regularisation of the singular source term $\delta$. The main result of this section is Theorem \ref{H2E:existencereg} in which we prove that the regularised problem has a unique globally in time defined $\mathcal{X}^{\alpha}$ solution.

In Section \ref{H2E:secdimred} we study the well-posedness of system \eqref{H2E:System}. Moreover, we perform the dimension reduction which is obtained by the limit passage $h\to0^+$. In section \ref{H2E:secAuxiliary} we introduce auxilliary functions which play fundamental role in the definition of M-mild solution given in section \ref{H2E:secM-mild}. Next in section \ref{H2E:secMain} we state the main results of the paper: in Theorem \ref{H2E:Maintheorem1} we prove that system \eqref{H2E:System} has a unique global M-mild solution, while in Theorem \ref{H2E:Maintheorem2} we show that this solution converges as $h\to0^+$ to the solution of an appropriate system posed on a one dimensional interval. 

\section{Notation}\label{H2E:Notation}

If $X$ is a linear space and $Y$ is an arbitrary subset of $X$ we denote by $lin(Y)$ the linear space which consists of all linear combinations of elements of $Y$. If $X$ is a topological space and $U$ is an arbitrary subset of $X$ we denote by $cl_{X}(U)$ the closure of $U$ in $X$. If $X$ is a normed vector space we denote by $\n{\cdot}_X$ its norm and by $X^*$ its topological dual. If $x\in X$ and $x^*\in X^*$ we denote by $\Big<x^*,x\Big>_{(X^*,X)}=x^*(x)$ a natural pairing between $X$ and $X^*$. If $H$ is a Hilbert space we denote by $(.|.)_H$ its scalar product. In particular $(\bsym{x}|\bsym{y})_{\mathbb{R}^n}=\sum_{i=1}^nx_iy_i$ and $(f|g)_{L_2(U)}=\int_{U}fg$. To get more familiar with the notation observe that, due to Riesz theorem, for any $x^*\in H^*$ there exists a unique $x\in H$ such that $\Big<x^*,y\Big>_{(H^*,H)}=(x|y)_H$ for all $y\in H$. If $H_1$ and $H_2$ are Hilbert spaces and $u\in H_1, \ v\in H_2$ we denote by $H_1\otimes H_2$ and $H\otimes H$ tensor products. For a comprehensive treatment on normed, Hilbert and Banach spaces we refer to [\cite{Reed}, Chap. I-III].

If $X,Y$ are normed spaces we denote by $\mathcal{L}(X,Y)$ the Banach space of bounded linear operators between $X$ and $Y$, topologised by the operator norm. We write $X\subset Y$ to denote a continuous imbedding of $X$ into $Y$, while the compact imbedding is denoted $X\subset\subset Y$. If $A\in\mathcal{L}(X,Y)$ then we denote by $A'\in\mathcal{L}(Y^*,X^*)$ the transpose of $A$. If $D(A)$ is a linear (not necessarily dense) subspace of a Banach space $X$  we write $A:X\supset D(A)\to X$ when $A$ is an unbounded linear operator with domain $D(A)$. We denote by $G(A)$ the graph of $A$ (i.e. the set $\{(u,Au): u\in D(A)\}$). 
For a closed, unbounded operator $A: X\supset D(A)\to X$ we denote by $\rho(A)$ the resolvent set of $A$, by $\sigma(A)=\mathbb{C}\setminus \rho(A)$ its spectrum and by $\sigma_p(A)$ its pure point spectrum (i.e. set of eigenvalues). The resolvent operator is denoted by $R(\lambda,A)$ or $(\lambda-A)^{-1}$ for $\lambda\in\rho(A)$. If $H_1,H_2$ are Hilbert spaces and $A:H_1\supset D(A)\to H_2$ is a densely defined unbounded operator we denote by $A^*:H_2\supset D(A^*)\to H_1$ the adjoint of $A$. Let us point out that if $X$ is a real vector space than by the spectrum of operator $A$ we understand the spectrum of its complexification. For a comprehensive treatment on bounded and unbounded linear operators as well as their spectral theory we refer to [\cite{Reed}, Chap. VI-VIII]. 

If $U$ is a subset of $\mathbb{R}^n$ we denote by $\overline{U}=cl_{\mathbb{R}^n}(U)$ its closure and by $\partial U$ its boundary. For $1\leq p\leq \infty, \ s\in\mathbb{R}$, we denote by $W^{s}_p(U)$ the fractional Sobolev (also known as Sobolev-Slobodecki) spaces. If $U$ is also bounded we denote by $C(\overline{U})$ the Banach space of continuous functions on $\overline{U}$ topologised by the supremum norm which we denote by $\n{\cdot}_{\infty}$. We will also use $\n{\cdot}_{\infty}$ to denote the essential supremum norm used in the definition of space $L_{\infty}(U)$. By $\mathcal{M}(\overline{U})=(C(\overline{U}))^*$ we denote the Banach space of finite, signed Radon measures. For $\theta\in[0,1]$ we denote by  $[\cdot,\cdot]_{\theta}$ the complex interpolation functor. For a comprehensive treatment on spaces $W^s_p(U)$ and functor $[\cdot,\cdot]_{\theta}$ we refer to \cite{Tri}.

In the whole article $I_+=(0,1), \ I=(-1,1)$ and $\Omega=(-1,1)\times(0,1)$ are fixed domains. Moreover we denote by $\delta$ a one dimensional Dirac Delta: $ \ \delta\in\mathcal{M}(\overline{I})$ and $\delta(\phi)=\int_{\overline{I}}\phi d\delta=\phi(0)$ for any $\phi\in C(\overline{I})$. For $i,j\in\mathbb{N}$ we denote by $\delta_{ij}$ the Kronecker symbol i.e. $\delta_{ij}=\begin{cases}1 &  {\rm if} \ i=j\\ 0 & {\rm if} \ i\neq j\end{cases}$.
 
In estimates we will use a generic constant $C$ which may take different values even in the same paragraph. Constant $C$ may depend on various parameters, but it will never depend on $h$  nor any other parameter which could change due to a limitting process.

\section{Analytical Tools}\label{H2E:secAnalytical}
\subsection{Inequalities}\label{H2E:secInequalities}

In Lemma \ref{H2E:ineq} we collect three elementary estimates which are used in the following chapters. For completeness of the reasoning we provide short proofs. \\

\begin{lem}\label{H2E:ineq}
The following inequalities hold
\bs\label{H2E:ineqs}
\eq{
\sup\{t^{\alpha}e^{-rt}: t\geq t_0\}&\leq C(r^{-\alpha}+t_0^{\alpha})e^{-rt_0}, &&t_0\geq0, \alpha\geq0, r>0, \label{H2E:ineq1}\\
\int_{0}^t\frac{d\tau}{\tau^{\alpha}(t-\tau)^{\beta}}&\leq Ct^{1-\alpha-\beta}, && t>0, \alpha,\beta\geq 0, \alpha+\beta<1,\label{H2E:ineq2}\\
\int_{0}^te^{-r\tau}\frac{d\tau}{\tau^{\alpha}(t-\tau)^{\beta}}&\leq C\Big(\frac{t^{\alpha+\beta}}{r}\Big)^{\frac{1-(\alpha+\beta)}{1+\alpha+\beta}}, && t>0, \alpha,\beta\geq 0, \alpha+\beta<1, r>0, \label{H2E:ineq3}
}
where constant $C$ depends only on $\alpha$ and $\beta$.
\es
\end{lem}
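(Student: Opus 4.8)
The plan is to prove each of the three inequalities separately by elementary means.

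\textbf{Inequality \eqref{H2E:ineq1}.} First I would split into cases according to where the maximum of $f(t)=t^\alpha e^{-rt}$ on $[t_0,\infty)$ is attained. The unconstrained maximum of $f$ on $(0,\infty)$ occurs at $t_*=\alpha/r$. If $t_0\ge t_*$, then $f$ is decreasing on $[t_0,\infty)$, so $\sup_{t\ge t_0}f(t)=f(t_0)=t_0^\alpha e^{-rt_0}\le (r^{-\alpha}+t_0^\alpha)e^{-rt_0}$. If $t_0<t_*$, then the sup equals $f(t_*)=(\alpha/r)^\alpha e^{-\alpha}$; since $t_0<t_*=\alpha/r$ gives $e^{-rt_0}\ge e^{-\alpha}$, we get $f(t_*)=\alpha^\alpha e^{-\alpha} r^{-\alpha}\le \alpha^\alpha r^{-\alpha}e^{-rt_0}\le C r^{-\alpha}e^{-rt_0}$ with $C=\alpha^\alpha$ (for $\alpha=0$ the statement is trivial since $f\equiv e^{-rt}$). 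In both cases the bound $C(r^{-\alpha}+t_0^\alpha)e^{-rt_0}$ holds with $C=\max\{1,\alpha^\alpha\}$, a constant depending only on $\alpha$.

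\textbf{Inequality \eqref{H2E:ineq2}.} This is the standard Beta-function bound. I substitute $\tau=ts$ to obtain
\[
\int_0^t \frac{d\tau}{\tau^\alpha(t-\tau)^\beta}=t^{1-\alpha-\beta}\int_0^1 \frac{ds}{s^\alpha(1-s)^\beta}=t^{1-\alpha-\beta}B(1-\alpha,1-\beta),
\]
and the Beta integral converges precisely because $\alpha<1$ and $\beta<1$ (which follow from $\alpha,\beta\ge0$ and $\alpha+\beta<1$). So the claim holds with $C=B(1-\alpha,1-\beta)$, depending only on $\alpha,\beta$.

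\textbf{Inequality \eqref{H2E:ineq3}.} This is the one requiring an actual idea. Write $\sigma=\alpha+\beta\in[0,1)$ and abbreviate $I(t,r)=\int_0^t e^{-r\tau}\tau^{-\alpha}(t-\tau)^{-\beta}\,d\tau$. I would estimate $I$ in two complementary ways and then optimize over a free splitting parameter. First, dropping the exponential gives $I(t,r)\le C t^{1-\sigma}$ by \eqref{H2E:ineq2}. Second, for a cruder bound that exploits the decay: on any interval I can bound one of the singular factors by a power of the other; the cleanest route is to introduce a cutoff time $\rho\in(0,t]$, split $\int_0^t=\int_0^\rho+\int_\rho^t$, bound $e^{-r\tau}\le 1$ on $[0,\rho]$ and $(t-\tau)^{-\beta}\le$ (its value near the singularity handled by \eqref{H2E:ineq2} restricted to $[0,\rho]$) to get the first piece $\le C\rho^{1-\sigma}$, while on $[\rho,t]$ I bound $\tau^{-\alpha}\le \rho^{-\alpha}$... — the honest approach, and the one I expect the paper uses, is instead to scale. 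Substituting $\tau=s/r$ gives $I(t,r)=r^{\sigma-1}\int_0^{rt} e^{-s}s^{-\alpha}(rt-s)^{-\beta}\,ds$, but the $(rt-s)^{-\beta}$ factor still couples to $t$, so pure scaling does not finish it. The workable plan: fix $\theta\in(0,1)$ to be chosen, and split the integral at $\tau=\theta t$. On $[0,\theta t]$ we have $(t-\tau)^{-\beta}\le ((1-\theta)t)^{-\beta}$, so this piece is $\le ((1-\theta)t)^{-\beta}\int_0^{\theta t}e^{-r\tau}\tau^{-\alpha}d\tau\le C((1-\theta)t)^{-\beta}r^{\alpha-1}\Gamma(1-\alpha)$ using $\int_0^\infty e^{-r\tau}\tau^{-\alpha}d\tau=\Gamma(1-\alpha)r^{\alpha-1}$. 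On $[\theta t,t]$ we have $e^{-r\tau}\le e^{-r\theta t}$ and $\tau^{-\alpha}\le(\theta t)^{-\alpha}$, so this piece is $\le e^{-r\theta t}(\theta t)^{-\alpha}\int_{\theta t}^t(t-\tau)^{-\beta}d\tau\le C e^{-r\theta t}(\theta t)^{-\alpha}t^{1-\beta}$. Combining, $I(t,r)\le C\big[(1-\theta)^{-\beta}r^{\alpha-1}t^{-\beta}+\theta^{-\alpha}t^{1-\sigma}e^{-r\theta t}\big]$. Now I would choose $\theta$ as a function of the dimensionless quantity $rt$ to balance the two terms — taking $\theta t$ comparable to $(t^{\sigma}/r)^{\frac{2}{1+\sigma}}\cdot t^{-1}\cdot t = $ (equivalently, choosing $r\theta t$ of order $(rt)^{\frac{2}{1+\sigma}}$, which lies in $(0,rt)$ since the exponent is $<1$ when $\sigma<1$) — and bound $e^{-r\theta t}$ crudely by $1$ or by a constant, after which algebra collapses both terms to $C(t^{\sigma}/r)^{\frac{1-\sigma}{1+\sigma}}$.

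The main obstacle is exactly this last optimization in \eqref{H2E:ineq3}: keeping track of how the splitting parameter $\theta$ (or equivalently the cutoff time) must depend jointly on $r$ and $t$ so that the purely algebraic term and the exponentially small term are both controlled by the target power $\big(t^{\alpha+\beta}/r\big)^{\frac{1-(\alpha+\beta)}{1+\alpha+\beta}}$, and verifying that the resulting constant depends only on $\alpha$ and $\beta$ and not on $r,t$. The first two inequalities are routine once the right substitution or case split is written down.
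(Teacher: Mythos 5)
Your treatment of \eqref{H2E:ineq1} and \eqref{H2E:ineq2} is correct and coincides with the paper's proof (case analysis at the critical point $\alpha/r$, and the substitution $\tau=ty$ reducing to a convergent Beta integral).

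For \eqref{H2E:ineq3} your route (splitting the domain of integration at $\theta t$) differs from the paper's and, as written, has a genuine gap. Write $\sigma=\alpha+\beta$ and $\lambda=\frac{1-\sigma}{1+\sigma}$. In your combined bound $I(t,r)\leq C[(1-\theta)^{-\beta}r^{\alpha-1}t^{-\beta}+\theta^{-\alpha}t^{1-\sigma}e^{-r\theta t}]$ the first term carries the fixed powers $r^{\alpha-1}t^{-\beta}$, which no choice of $\theta$ can alter; since $\alpha-1+\lambda=-(\beta+\sigma\lambda)$, the ratio of that term to the target $(t^{\sigma}/r)^{\lambda}$ equals $(rt)^{-(\beta+\sigma\lambda)}$, which blows up as $rt\to0$ (take $\alpha=0$, $\beta=1/2$, $t=1$, $r\to0$: the term is $r^{-1}$ while the target is $r^{-1/3}$). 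The small-$rt$ regime therefore must be disposed of separately by the crude bound $I\leq Ct^{1-\sigma}$, which is below the target exactly when $rt\leq1$ --- you mention this bound but never assemble the case split. Moreover, your prescription for $\theta$ rests on the claim that $\tfrac{2}{1+\sigma}<1$, which is false ($\sigma<1$ gives $\tfrac{2}{1+\sigma}>1$), so for $rt>1$ the proposed cutoff $r\theta t\sim(rt)^{2/(1+\sigma)}$ exceeds $rt$, i.e.\ $\theta>1$; and bounding $e^{-r\theta t}$ ``by a constant'' cannot close the large-$rt$ case, where you need the exponential to beat a power, namely $e^{-r\theta t}\leq C(rt)^{-\lambda}$. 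The repair is short: for $rt\geq1$ take $\theta=1/2$; the first term is then controlled by the identity above, and the second by $Ct^{1-\sigma}(rt)^{-\lambda}\sup_{s\geq0}s^{\lambda}e^{-s/2}$. The paper sidesteps all of this with a single H\"older step: after $\tau=ty$, put $e^{-rty}$ in $L_p(0,1)$ and $y^{-\alpha}(1-y)^{-\beta}$ in $L_q(0,1)$ with $q=\frac{1+\sigma}{2\sigma}$ (so $q\sigma<1$ and $1/p=\lambda$); the factor $\big(\int_0^1e^{-prty}dy\big)^{1/p}\leq(rtp)^{-1/p}$ then produces the exponent $\lambda$ directly, with no case distinction on $rt$.
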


\begin{proof}
To prove \eqref{H2E:ineq1} define for $t\geq0$ function $f(t)=t^{\alpha}e^{-rt}$. Then $f'(t)=\alpha t^{\alpha-1}e^{-rt}-rt^{\alpha}e^{-rt}=t^{\alpha-1}e^{-rt}(\alpha-rt)$. Analysing the sign of $f'$ we obtain that function $f$ is increasing on $[0,\alpha/r]$ and decreasing on $[\alpha/r,\infty)$. It follows that 
\begin{align*}
\sup\{t^{\alpha}e^{-rt}: t\geq t_0\}=\left\{ \begin{array}{ll}
f(\alpha/r) & \textrm{if $t_0\leq\alpha/r$}\\
f(t_0) & \textrm{if $t_0>\alpha/r$}\\
\end{array} \right.\leq f(\alpha/r)+f(t_0)\leq C(r^{-\alpha}+t_0^{\alpha})e^{-rt_0},
\end{align*}
where one can take $C=\max\{\alpha^\alpha,1\}$. 
To prove inequality \eqref{H2E:ineq2} we change variables $\tau=ty$. Then we have
\begin{align*}
\int_0^t\frac{d\tau}{\tau^{\alpha}(t-\tau)^{\beta}}=t^{1-\alpha-\beta}\int_{0}^1\frac{dy}{y^{\alpha}(1-y)^{\beta}}\leq Ct^{1-\alpha-\beta}.
\end{align*}
Finally we prove \eqref{H2E:ineq3}. Set $q=\frac{\alpha+\beta+1}{2(\alpha+\beta)}$. It is easy to check that $1<q$ and $(\alpha+\beta)q<1$. Let $p=\frac{q}{q-1}=\frac{1+(\alpha+\beta)}{1-(\alpha+\beta)}$ be $q$'s H\"older conjugate exponent. Using H\"older inequality we obtain
\begin{align*}
&\int_{0}^te^{-r\tau}\frac{1}{\tau^{\alpha}(t-\tau)^{\beta}}d\tau=t^{1-(\alpha+\beta)}\int_0^1e^{-rty}\frac{1}{y^{\alpha}(1-y)^{\beta}}dy\leq t^{1-(\alpha+\beta)}\Big(\int_0^1e^{-prty}dy\Big)^{1/p}\Big(\int_0^1\frac{dy}{y^{q\alpha}(1-y)^{q\beta}}\Big)^{1/q}\\
&\leq Ct^{1-(\alpha+\beta)}\Big(\frac{1-e^{-rtp}}{rtp}\Big)^{1/p}\leq C\frac{t^{1-(\alpha+\beta)-1/p}}{r^{1/p}}=C\frac{t^{(\alpha+\beta)\frac{1-(\alpha+\beta)}{1+\alpha+\beta}}}{r^{\frac{1-(\alpha+\beta)}{1+\alpha+\beta}}}=C\Big(\frac{t^{\alpha+\beta}}{r}\Big)^{\frac{1-(\alpha+\beta)}{1+\alpha+\beta}}.
\end{align*}

\end{proof}

Lemma \ref{H2E:Gronlem} is an extension of the well known Gronwall inequality in integral form. Although several results of similar type can be found in the  literature (for instance in \cite{Med}), we were not able to find a reference to the one which would cover the full range of parameters. Our method of proof is taken from \cite{Med}. \\  

\begin{lem}\label{H2E:Gronlem}
Let $0\leq\alpha,\beta, \ \alpha+\beta<1, \ 0\leq a, \ 0<b, \ 0<T<\infty$. Assume that $f\in L_{\infty}(0,T')$ for every $T'<T$ and that for a.e. $t\in(0,T)$ the following inequality holds
\begin{align*}
0\leq f(t)\leq a+b\int_0^t\frac{f(\tau)}{\tau^{\alpha}(t-\tau)^{\beta}}d\tau,
\end{align*}
then $f\in L_{\infty}(0,T)$ and
\begin{align*}
\n{f}_{L_{\infty}(0,T)}\leq Ca\exp\Big(Cb^{\frac{1+\alpha+\beta}{1-\alpha-\beta}}T^{1+\alpha+\beta}\Big),
\end{align*}
where $C$ depends only on $\alpha$ and $\beta$. Moreover $C=1$ when $\alpha=\beta=0$.
\end{lem}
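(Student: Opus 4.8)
The strategy is the classical Gronwall-iteration argument from \cite{Med}: feed the inequality into itself repeatedly and control the iterated kernel integrals using Lemma~\ref{H2E:ineq}, specifically \eqref{H2E:ineq2}. I would first reduce to proving a bound on $(0,T')$ for arbitrary $T'<T$ with a constant independent of $T'$, so that $f\in L_\infty(0,T')$ may be used legitimately at each step; the final conclusion on $(0,T)$ then follows by letting $T'\uparrow T$. Write $K(t,\tau)=\tau^{-\alpha}(t-\tau)^{-\beta}$ and $(\mathcal{T}g)(t)=b\int_0^t K(t,\tau)g(\tau)\,d\tau$, so the hypothesis is $0\le f\le a+\mathcal{T}f$ pointwise a.e. Since $\mathcal{T}$ is a positive operator and $f\in L_\infty(0,T')$, iteration gives, for every $n$,
\begin{align*}
f(t)\le a\sum_{j=0}^{n-1}(\mathcal{T}^j\mathbf{1})(t)+(\mathcal{T}^n f)(t),\qquad t\in(0,T').
\end{align*}

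**The kernel estimate.** The heart of the matter is to show $(\mathcal{T}^j\mathbf{1})(t)\le b^j\,C^j\,\Gamma\text{-factors}\cdot t^{j(1-\alpha-\beta)}$ with summable constants. Set $\sigma=1-\alpha-\beta\in(0,1]$. Using \eqref{H2E:ineq2} one gets $(\mathcal{T}\mathbf{1})(t)\le bC_0 t^{\sigma}$ where $C_0=\int_0^1 y^{-\alpha}(1-y)^{-\beta}\,dy = B(1-\alpha,1-\beta)$; and inductively, using the substitution $\tau=ty$ and the Beta-integral identity $\int_0^1 y^{-\alpha}(1-y)^{m\sigma-\beta}\,dy = B(1-\alpha, (m)\sigma+1-\beta)$, one obtains
\begin{align*}
(\mathcal{T}^j\mathbf{1})(t)\le b^j\, t^{j\sigma}\prod_{m=1}^{j} B\bigl(1-\alpha,\ (m-1)\sigma+1-\beta\bigr).
\end{align*}
The product of Beta functions is where one must be careful: writing $B(1-\alpha,x)=\Gamma(1-\alpha)\Gamma(x)/\Gamma(x+1-\alpha)$, the product telescopes partially and one checks it is bounded by $\Gamma(1-\alpha)^{j}/\Gamma(j\sigma+ \text{const})$-type behaviour, which decays faster than geometrically in $j$; hence $\sum_j (\mathcal{T}^j\mathbf{1})(t)$ converges and is $\le C\exp\bigl(C(bt^{\sigma})^{1/\sigma}\bigr)=C\exp\bigl(C b^{(1+\alpha+\beta)/(1-\alpha-\beta)}t^{1+\alpha+\beta}\bigr)$, after absorbing $b$ and $t$ into a single variable and comparing the series with that of a Mittag-Leffler function. (The exponent $\tfrac{1+\alpha+\beta}{1-\alpha-\beta}$ on $b$ and $1+\alpha+\beta$ on $T$ are exactly what the scaling $bt^\sigma$ forces: $(bt^\sigma)^{1/\sigma}=b^{1/\sigma}t$, and $1/\sigma=\tfrac{1}{1-\alpha-\beta}$; one then re-expresses powers to match the statement, using $t\le T$.) Meanwhile the remainder term satisfies $(\mathcal{T}^n f)(t)\le \n{f}_{L_\infty(0,T')}\, b^n t^{n\sigma}\prod_{m=1}^n B(1-\alpha,(m-1)\sigma+1-\beta)\to 0$ as $n\to\infty$, by the same product estimate, for each fixed $t$ (indeed uniformly on $(0,T')$). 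Passing to the limit $n\to\infty$ yields $f(t)\le a\sum_{j\ge0}(\mathcal{T}^j\mathbf 1)(t)$, giving the claimed bound on $(0,T')$ with constant independent of $T'$, and then on $(0,T)$.

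**Main obstacle and the $\alpha=\beta=0$ refinement.** The principal technical obstacle is the bookkeeping of the Beta-function product and extracting from it the precise exponential form with the stated exponents rather than a cruder bound; this requires the Mittag-Leffler comparison (or an explicit estimate $\prod_{m=1}^j \Gamma((m-1)\sigma+1-\beta)/\Gamma(j\sigma+\cdots)$ and Stirling) and is the only place where real care is needed. For the degenerate case $\alpha=\beta=0$ the operator is simply $(\mathcal{T}g)(t)=b\int_0^t g$, the iterated kernels are exactly $(bt)^j/j!$, the series sums to $e^{bt}\le e^{bT}$, and the remainder $(\mathcal{T}^n f)(t)\le \n f_\infty (bt)^n/n!\to0$; this reproduces the sharp classical Gronwall bound $\n f_{L_\infty(0,T)}\le a\,e^{bT}$ with $C=1$, matching "$C=1$ when $\alpha=\beta=0$" (note $1+\alpha+\beta=1$ and $\tfrac{1+\alpha+\beta}{1-\alpha-\beta}=1$ there, so the general formula degenerates correctly). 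I would present the general computation first and then remark that setting $\alpha=\beta=0$ makes every constant equal to $1$.
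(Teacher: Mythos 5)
Your proof is correct in outline but takes a genuinely different route from the paper's. The paper follows Medved's method \cite{Med}: apply H\"older's inequality with a conjugate pair $(p,q)$ chosen so that $q(\alpha+\beta)<1$, which converts the hypothesis into $f(t)^p\le 2^{p-1}a^p+2^{p-1}b^pC_0^pT^{p/q-p(\alpha+\beta)}\int_0^tf(\tau)^pd\tau$, i.e.\ a classical regular-kernel Gronwall inequality for $f^p$; the choice $q=\tfrac12\bigl(1+\tfrac1{\alpha+\beta}\bigr)$ then produces exactly the exponents $\tfrac{1+\alpha+\beta}{1-\alpha-\beta}$ and $1+\alpha+\beta$ of the statement in two lines. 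Your argument is instead the iteration scheme of Henry type: it is sound, and once the Beta-product is controlled it yields the somewhat sharper bound $Ca\exp\bigl(C(bT^{\sigma})^{1/\sigma}\bigr)=Ca\exp\bigl(Cb^{1/(1-\alpha-\beta)}T\bigr)$ with $\sigma=1-\alpha-\beta$, which implies the stated one (for $bT^{\sigma}\ge1$ use $z^{1/\sigma}\le z^{(2-\sigma)/\sigma}$, and for $bT^{\sigma}\le1$ both exponentials are $O(1)$ and can be absorbed into $C$). The price is the Gamma-function bookkeeping, which you rightly flag as the only delicate point; note one index slip there: since the iterates satisfy $\phi_j(t)=b^jc_jt^{j\sigma}$ with the accumulated power sitting on the variable $\tau$ (not on $t-\tau$), the recursion is $c_{j+1}=c_j\,B\bigl(j\sigma+1-\alpha,\,1-\beta\bigr)$ rather than $c_{j+1}=c_j\,B\bigl(1-\alpha,\,j\sigma+1-\beta\bigr)$. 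This is harmless for the asymptotics — either product is bounded by $D^j/\Gamma(j\sigma+1)$ for a constant $D=D(\alpha,\beta)$, whence the Mittag--Leffler comparison — but it should be fixed before the Stirling step. Your treatment of the remainder term $\mathcal{T}^nf\to0$ on $(0,T')$ and the passage $T'\uparrow T$ is correct and makes legitimate use of the hypothesis $f\in L_\infty(0,T')$. In short: the paper's H\"older reduction is shorter and lands directly on the stated constants; your iteration is more classical, gives a marginally stronger exponent, and costs the Beta-product analysis.
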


\begin{proof}
When $\alpha=\beta=0$ the result is the well known Gronwall inequality in integral form. Otherwise we proceed similarly as in the proof of inequality \eqref{H2E:ineq3}.
Fix $q>1$ such that $q(\alpha+\beta)<1$ and let $p=\frac{q}{q-1}$ be $q$'s H\"older conjugate exponent. Using H\"older inequality we obtain
\begin{align*}
\int_0^t\frac{f(\tau)}{\tau^{\alpha}(t-\tau)^{\beta}}d\tau&\leq\Big(\int_0^tf(\tau)^pd\tau\Big)^{1/p}\Big(\int_0^t\frac{d\tau}{\tau^{\alpha q}(t-\tau)^{\beta q}}\Big)^{1/q}\\
&=\Big(\int_0^tf(\tau)^pd\tau\Big)^{1/p}t^{1/q-(\alpha+\beta)}\Big(\int_0^1\frac{d\tau}{\tau^{\alpha q}(1-\tau)^{\beta q}}\Big)^{1/q}\\
&\leq C_0T^{1/q-(\alpha+\beta)}\Big(\int_0^tf(\tau)^pd\tau\Big)^{1/p},
\end{align*}
where $C_0=\int_0^1\frac{d\tau}{\tau^{\alpha q}(1-\tau)^{\beta q}}$.
Thus
\begin{align*}
f(t)^p\leq \Big(a+bC_0T^{1/q-(\alpha+\beta)}\Big(\int_0^tf(\tau)^pd\tau\Big)^{1/p}\Big)^p\leq 2^{p-1}a^p+2^{p-1}b^pC_0^pT^{p/q-p(\alpha+\beta)}\int_0^tf(\tau)^pd\tau.
\end{align*}
Using Lemma \ref{H2E:Gronlem} with $\alpha=\beta=0$ we obtain
\begin{align*}
f(t)^p&\leq 2^{p-1}a^p\exp\Big(2^{p-1}b^pC_0^pT^{p/q-p(\alpha+\beta)}t\Big),\\
f(t)&\leq 2^{1/q}a\exp\Big(p^{-1}2^{p-1}b^pC_0^pT^{p/q-p(\alpha+\beta)+1}\Big)= 2^{1/q}a\exp\Big(p^{-1}2^{p-1}b^pC_0^pT^{p(1-\alpha-\beta)}\Big)\\
&\leq Ca\exp(Cb^pT^{p(1-\alpha-\beta)}),
\end{align*}
with $C=\max\{2^{1/q},p^{-1}2^{p-1}C_0^p\}$. To finish the proof observe that for $q=\frac{1}{2}\Big(1+\frac{1}{\alpha+\beta}\Big)$ one has $p=\frac{1+\alpha+\beta}{1-\alpha-\beta}$.
\end{proof}

\subsection{Existence result for a system of abstract ODE's}\label{H2E:secex}

For $i=1,\ldots, n$ let $(\mathcal{X}_i,\mathcal{X}_i^1)$ be a densely injected Banach couple (i.e. $\mathcal{X}_i^1$ is a dense subspace of $\mathcal{X}_i$ in the topology of $\mathcal{X}_i$). For $\alpha_i\in(0,1)$ denote $\mathcal{X}_i^{\alpha_i}=[\mathcal{X}_i,\mathcal{X}_i^1]_{\alpha_i}$ (where $[.,.]_{\alpha_i}$ is the complex interpolation functor). Finally note
\begin{align}
\alpha=(\alpha_1,\ldots,\alpha_n), \
\mathcal{X}=\mathcal{X}_1\times\ldots\times\mathcal{X}_n, \ 
\mathcal{X}^1=\mathcal{X}_1^1\times\ldots\times\mathcal{X}_n^1, \ 
\mathcal{X}^{\alpha}=\mathcal{X}_1^{\alpha_1}\times\ldots\times\mathcal{X}_n^{\alpha_n}.
\end{align}

\begin{lem}\label{H2E:exlem}

Assume that for $i=1,\ldots, n$ the following three conditions are satisfied
\begin{enumerate}
\item Operator $\mathcal{A}_i:\mathcal{X}_i\supset\mathcal{X}_i^1\to\mathcal{X}_i$ generates an analytic strongly continuous semigroup $e^{t\mathcal{A}_i}$. 
\item Map $\mathcal{F}_i: \mathcal{X}^{\alpha}\to \mathcal{X}_i$ is Lipschitz on bounded sets i.e.
\begin{align*}
\forall_{R>0}\exists_{C_R} \ \Big[\n{\bsym{u}}_{\mathcal{X}^{\alpha}},\n{\bsym{w}}_{\mathcal{X}^{\alpha}}\leq R\Longrightarrow\n{\mathcal{F}_i(\bsym{u})-\mathcal{F}_i(\bsym{w})}_{\mathcal{X}_i}\leq C_R\n{\bsym{u}-\bsym{w}}_{\mathcal{X}^{\alpha}}\Big]
\end{align*}
\item $u_{0i}\in\mathcal{X}_i^{\alpha_i}$.
\end{enumerate}
Then the following system of abstract ODE's
\begin{align}
\frac{d}{dt}u_i-\mathcal{A}_iu_i&=\mathcal{F}_i(\bsym{u}), \ t>0\label{H2E:abs1}\\
u_i(0)&=u_{0i}\label{H2E:abs2}
\end{align}
has a unique maximal $\mathcal{X}^{\alpha}$ solution $\bsym{u}=(u_1,\ldots,u_n)$ i.e. there exists a unique
\begin{align*}
\bsym{u}\in C([0,T_{\max});\mathcal{X}^{\alpha})\cap C^1((0,T_{\max});\mathcal{X})\cap C((0,T_{\max});\mathcal{X}^1),
\end{align*}
which satisfies system \eqref{H2E:abs1}-\eqref{H2E:abs2} in the classical sense.
For $t\in(0,T_{\max})$ the following Duhamel formulas hold:
\begin{align*}
u_{i}(t)=e^{t\mathcal{A}_i}u_{0i}+\int_0^te^{(t-\tau)\mathcal{A}_i}\mathcal{F}_i(\bsym{u}(\tau))d\tau, \ 1\leq i\leq n.
\end{align*}
and $T_{\max}$ satisfies the blow-up condition:
\begin{align}
{\rm if} \ T_{\max}<\infty \ {\rm then} \ \limsup_{t\to T_{\max}^{-}}\n{\bsym{u}(t)}_{\mathcal{X}^{\alpha}}=\infty.\label{H2E:blowupcond}
\end{align}
In particular if there exists $C$ such that
\begin{align}
\sum_{i=1}^n\n{\mathcal{F}_i(\bsym{u}(t))}_{\mathcal{X}_i}\leq C(\n{\bsym{u}(t)}_{\mathcal{X}^{\alpha}}+1) \ {\rm for} \  t\in[0,T_{\max})\label{H2E:sublinear}
\end{align}
then $T_{\max}=\infty$.
\end{lem}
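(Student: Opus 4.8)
The plan is to reduce the system of abstract ODEs to a single fixed-point problem and invoke a standard local existence theorem for semilinear parabolic equations, then upgrade to global existence under \eqref{H2E:sublinear}. First I would equip $\mathcal{X}^\alpha = \prod_i \mathcal{X}_i^{\alpha_i}$ with the product norm $\n{\bsym{u}}_{\mathcal{X}^\alpha} = \sum_i \n{u_i}_{\mathcal{X}_i^{\alpha_i}}$, observe that the diagonal operator $\mathcal{A} = (\mathcal{A}_1,\ldots,\mathcal{A}_n)$ on $\mathcal{X}$ with domain $\mathcal{X}^1$ generates the analytic strongly continuous semigroup $e^{t\mathcal{A}} = (e^{t\mathcal{A}_1},\ldots,e^{t\mathcal{A}_n})$, and that by the standard theory of fractional powers / complex interpolation $[\mathcal{X},\mathcal{X}^1]_{\alpha_i}$ is (up to equivalent norms) the relevant interpolation space on which $e^{t\mathcal{A}_i}$ satisfies the smoothing estimate $\n{e^{t\mathcal{A}_i}}_{\mathcal{L}(\mathcal{X}_i,\mathcal{X}_i^{\alpha_i})} \le C t^{-\alpha_i} e^{\omega t}$ for some $\omega$, together with $\n{e^{t\mathcal{A}_i} v - v}_{\mathcal{X}_i^{\alpha_i}} \to 0$ as $t\to 0^+$ for $v \in \mathcal{X}_i^{\alpha_i}$. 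The map $\mathcal{F} = (\mathcal{F}_1,\ldots,\mathcal{F}_n): \mathcal{X}^\alpha \to \mathcal{X}$ is Lipschitz on bounded sets by hypothesis (2).

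Next I would set up the integral (Duhamel) formulation: for fixed $\bsym{u}_0 \in \mathcal{X}^\alpha$, $R > \n{\bsym{u}_0}_{\mathcal{X}^\alpha}$, and small $\tau_0 > 0$, consider the closed ball $B$ of radius $R$ in $C([0,\tau_0];\mathcal{X}^\alpha)$ centred at the constant function $\bsym{u}_0$ (or radius $2R$ centred at $0$), and define $(\Phi\bsym{v})_i(t) = e^{t\mathcal{A}_i}u_{0i} + \int_0^t e^{(t-\tau)\mathcal{A}_i}\mathcal{F}_i(\bsym{v}(\tau))\,d\tau$. Using the smoothing estimate, $C_R$ from hypothesis (2), and inequality \eqref{H2E:ineq2} of Lemma \ref{H2E:ineq} to bound $\int_0^t (t-\tau)^{-\alpha_i}\,d\tau \le C t^{1-\alpha_i}$, one checks that for $\tau_0$ small enough $\Phi$ maps $B$ into itself and is a contraction there; the strong continuity of the semigroup gives $\Phi\bsym{v} \in C([0,\tau_0];\mathcal{X}^\alpha)$ (continuity at $t=0$ requires the $\n{e^{t\mathcal{A}_i}u_{0i} - u_{0i}}_{\mathcal{X}_i^{\alpha_i}} \to 0$ remark). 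Banach's fixed point theorem then yields a unique mild solution on $[0,\tau_0]$. Standard parabolic regularity bootstrapping (differentiating the Duhamel formula, using Hölder continuity in time of $\tau \mapsto \mathcal{F}_i(\bsym{u}(\tau))$ which follows once $\bsym{u} \in C([0,\tau_0];\mathcal{X}^\alpha)$ and $\mathcal{F}_i$ is locally Lipschitz) promotes the mild solution to the asserted class $C^1((0,T_{\max});\mathcal{X}) \cap C((0,T_{\max});\mathcal{X}^1)$ solving \eqref{H2E:abs1}--\eqref{H2E:abs2} classically. Uniqueness of the maximal solution follows by a connectedness/Gronwall argument on any interval where two solutions coexist, and patching local solutions defines $T_{\max}$.

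For the blow-up alternative \eqref{H2E:blowupcond}: if $T_{\max} < \infty$ but $\limsup_{t\to T_{\max}^-}\n{\bsym{u}(t)}_{\mathcal{X}^\alpha} = M < \infty$, then $C_R$ (for $R$ slightly above $M$) and the smoothing constants are uniform, so the local existence time $\tau_0$ is bounded below uniformly for initial data in the ball of radius $M+1$; taking $t_1$ close enough to $T_{\max}$ and restarting from $\bsym{u}(t_1)$ would extend the solution past $T_{\max}$, a contradiction. Finally, for the last claim, assume \eqref{H2E:sublinear}: from the Duhamel formula and the smoothing estimate, $\n{\bsym{u}(t)}_{\mathcal{X}^\alpha} \le C_0 + C\int_0^t (t-\tau)^{-\bar\alpha}(\n{\bsym{u}(\tau)}_{\mathcal{X}^\alpha}+1)\,d\tau$ on $[0,T_{\max})$, where $\bar\alpha = \max_i \alpha_i < 1$ (one may first absorb $e^{\omega t}$ into constants depending on $T_{\max}$, arguing on finite subintervals); the singular Gronwall inequality of Lemma \ref{H2E:Gronlem} then bounds $\n{\bsym{u}(t)}_{\mathcal{X}^\alpha}$ on every $[0,T']$ with $T' < T_{\max}$ by a finite constant depending only on $T'$ and the data, contradicting \eqref{H2E:blowupcond} unless $T_{\max} = \infty$.

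The main obstacle is getting the continuity of the fixed point up to $t=0$ in the $\mathcal{X}^\alpha$-topology and the bootstrap to a classical solution done cleanly — these are where the interaction between the singular factor $t^{-\alpha_i}$ and the strong continuity of the semigroup is delicate; everything else is a routine application of the contraction mapping principle and Lemmas \ref{H2E:ineq} and \ref{H2E:Gronlem}. (Since this is essentially a packaging of well-known analytic-semigroup theory — e.g. Henry's framework — in a product setting, the author may instead simply cite such a reference; if so, the key verification is that hypotheses (1)--(3) match the cited theorem's assumptions and that the product structure causes no difficulty because the coupling enters only through the nonlinearity $\mathcal{F}$, not the linear part.)
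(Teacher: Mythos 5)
Your proposal is correct and is essentially the argument the paper has in mind: the paper's own ``proof'' simply observes that for $n=1$ this is the classical contraction-mapping result for semilinear equations in interpolation spaces (citing [Lor, Theorem 6.3.2]) and that the case $n>1$ follows by obvious modifications, which is exactly the fixed-point scheme, blow-up alternative, and singular-Gronwall globalization you spell out. Your closing parenthetical anticipates this precisely, so no further comparison is needed.
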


\begin{proof}
If $n=1$ the result is well-known and can be proved using contraction mapping principle (see for instance [\cite{Lor}, Theorem 6.3.2]) . For $n>1$ one can adapt the same method with obvious modifications. 
\end{proof}

\subsection{Operators, semigroups, estimates}\label{H2E:secOperators}

\subsubsection{The $X^s$ spaces, operators $A_0,A_h$.}
Let us recall that $I_+=(0,1), \ I=(-1,1), \ \Omega=I\times I_+$. For $U\in\{I_+,I,\Omega\}$ we denote 
\begin{align*}
X(U)=L_2(U), \ (\cdot|\cdot)_{X(U)}=(\cdot|\cdot)_{L_2(U)}.
\end{align*}
For $i,j\in\mathbb{N}$ we define functions $u_i,v_i,w_{ij}$
\begin{align}
u_i(x_1)&=c_{1i}\cos(i\pi(x_1+1)/2), \ x_1\in I, \quad
v_i(x_2)=c_{2i}\cos(i\pi x_2), \ x_2\in I_+\\
w_{ij}(x_1,x_2)&=u_i(x_1)v_j(x_2), \ (x_1,x_2)\in\Omega\label{H2E:wij},
\end{align}
where constants $c_{1i},c_{2i}$ are such that $\n{u_i}_{X(I)}=\n{v_i}_{X(I_+)}=1$ i.e.
\begin{align}\label{H2E:cij}
c_{1i}=\begin{cases} 1/\sqrt{2} & \ {\rm if} \ i=0\\
									1 & \ {\rm if} \ i>0
\end{cases}, \quad 
c_{2i}=\begin{cases} 1 & \ {\rm if} \ i=0\\
									\sqrt{2} & \ {\rm if} \ i>0
\end{cases}.
\end{align}

The reason we introduce functions $u_i,v_i,w_{ij}$ is given in the following\\
\begin{lem}\label{H2E:basis}
The set $\{v_i: i\in\mathbb{N}\}$ (resp. $\{u_i: i\in\mathbb{N}\}$ and $\{w_{ij}: i,j\in\mathbb{N}\}$) is a complete orthonormal system in $X(I_+)$ (resp. $X(I)$ and $X(\Omega)$).
\end{lem}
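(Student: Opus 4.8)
The plan is to prove each of the three claims by reducing to the classical one–dimensional fact that the eigenfunctions of the Neumann Laplacian on an interval form a complete orthonormal basis of $L_2$, and then to obtain the two–dimensional statement from the one–dimensional ones via the tensor product structure.

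First I would treat $\{v_i : i\in\mathbb{N}\}$ in $X(I_+)=L_2(0,1)$. The functions $v_i(x_2)=c_{2i}\cos(i\pi x_2)$ are (up to normalisation) the classical Fourier cosine system on $(0,1)$; orthonormality is the elementary computation $\int_0^1\cos(i\pi x_2)\cos(j\pi x_2)\,dx_2=\tfrac12\delta_{ij}$ for $i,j\geq 1$, together with $\int_0^1 1\,dx_2=1$ and $\int_0^1\cos(i\pi x_2)\,dx_2=0$ for $i\geq 1$, which explains the constants $c_{2i}$ in \eqref{H2E:cij}. Completeness is the standard fact that the cosine system is complete in $L_2(0,1)$ — one can cite it directly, or derive it by reflecting $f\in L_2(0,1)$ evenly about $0$ to an even function in $L_2(-1,1)$ and invoking completeness of the full trigonometric system on $(-1,1)$, or equivalently note that these are the eigenfunctions of the self-adjoint operator $-d^2/dx_2^2$ with Neumann boundary conditions, whose resolvent is compact, so its eigenfunctions form a complete orthonormal system by the spectral theorem. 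The case of $\{u_i : i\in\mathbb{N}\}$ in $X(I)=L_2(-1,1)$ is identical after the affine change of variables $x_1\mapsto (x_1+1)/2$ mapping $(-1,1)$ onto $(0,1)$ (which is why the argument of the cosine in $u_i$ is $i\pi(x_1+1)/2$), so completeness and orthonormality of $\{u_i\}$ follow from the corresponding statement for a cosine system on $(0,1)$, again with the normalisation constants $c_{1i}$ chosen accordingly.

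For $\{w_{ij} : i,j\in\mathbb{N}\}$ with $w_{ij}(x_1,x_2)=u_i(x_1)v_j(x_2)$ I would use the tensor product principle: if $\{e_i\}$ is a complete orthonormal system in a Hilbert space $H_1$ and $\{f_j\}$ is a complete orthonormal system in $H_2$, then $\{e_i\otimes f_j\}$ is a complete orthonormal system in $H_1\otimes H_2$, and under the canonical isometric isomorphism $L_2(I)\otimes L_2(I_+)\cong L_2(I\times I_+)=X(\Omega)$ the element $u_i\otimes v_j$ corresponds precisely to the function $(x_1,x_2)\mapsto u_i(x_1)v_j(x_2)=w_{ij}$. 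Orthonormality is then immediate from Fubini: $(w_{ij}|w_{kl})_{X(\Omega)}=(u_i|u_k)_{X(I)}(v_j|v_l)_{X(I_+)}=\delta_{ik}\delta_{jl}$. For completeness I would show that the closed linear span of $\{w_{ij}\}$ is all of $X(\Omega)$: it suffices to prove that if $g\in X(\Omega)$ satisfies $(g|w_{ij})_{X(\Omega)}=0$ for all $i,j$ then $g=0$. Fixing $i$ and writing $(g|w_{ij})_{X(\Omega)}=(G_i|v_j)_{X(I_+)}$ where $G_i(x_2)=\int_I g(x_1,x_2)u_i(x_1)\,dx_1$ (an $L_2(I_+)$ function by Cauchy–Schwarz and Fubini), completeness of $\{v_j\}$ in $X(I_+)$ gives $G_i=0$ in $L_2(I_+)$ for every $i$, i.e. for a.e.\ $x_2$ the function $x_1\mapsto g(x_1,x_2)$ is orthogonal to every $u_i$; completeness of $\{u_i\}$ in $X(I)$ then forces $g(\cdot,x_2)=0$ for a.e.\ $x_2$, hence $g=0$.

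The main obstacle is essentially bookkeeping rather than depth: the only genuinely nontrivial input is the completeness of the cosine system in $L_2(0,1)$ (equivalently, completeness of eigenfunctions of the one-dimensional Neumann Laplacian), which is classical and can be quoted; everything else — the normalisation constants, the affine reparametrisation for $u_i$, and the passage to two dimensions — is routine, with the tensor/Fubini argument for $\{w_{ij}\}$ being the one place where a little care with measurability (applying Fubini to justify that $G_i\in L_2(I_+)$) is needed. One should also take a moment to note that the indexing is over all of $\mathbb{N}$ including $i=0$ or $j=0$, so the constant functions are correctly included with their normalisations, and that the inner products here are the real $L_2$ inner products consistent with the conventions fixed in Section~\ref{H2E:Notation}.
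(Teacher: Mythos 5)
Your proposal is correct and follows essentially the same route as the paper: the classical completeness of the cosine system on an interval, the affine change of variables for $\{u_i\}$, and the tensor product structure $X(\Omega)=X(I)\otimes X(I_+)$ for $\{w_{ij}\}$ (the paper simply cites Reed--Simon for the tensor step, where you additionally spell out the Fubini argument). No gaps.
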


\begin{proof}
The fact that  $\{v_i:  i\in\mathbb{N}\}$ is a complete orthonormal system in $X(I_+)$ is well known. Since $u_i(x)=(c_{1i}/c_{2i})v_i((x+1)/2)$ the thesis for the set $\{u_i:  i\in\mathbb{N}\}$ follows. Finally observe that since $w_{ij}=u_i\otimes v_j$ and $X(\Omega)=X(I)\otimes X(I_+)$ then the claim for the set $\{w_{ij}:  i,j\in\mathbb{N}\}$ follows from [\cite{Reed}, Chap. II.4, Prop. 2].

\end{proof}
Denote
\begin{align*}
X_{fin}(I)=lin(\{u_i:\ i\in\mathbb{N}\}), \quad X_{fin}(\Omega)=lin(\{w_{ij}:\ i,j\in\mathbb{N}\}).
\end{align*}
Define sequences
\begin{align*}
\lambda^{I_+}&=(\lambda^{I_+}_i)_{i\in\mathbb{N}}, \ 
\lambda^{I_+}_i=-(i\pi)^2, \ i\in\mathbb{N}\\
\lambda^I&=(\lambda^I_i)_{i\in\mathbb{N}}, \  \lambda^I_i=-(i\pi/2)^2, \ i\in\mathbb{N}\\
\lambda^{\Omega}_{,h}&=(\lambda^{\Omega}_{ij,h})_{i,j\in\mathbb{N}}, \ \lambda^{\Omega}_{ij,h}=\lambda_i^I+h^{-2}\lambda_j^{I_+}=-(i\pi/2)^2-(j\pi/h)^2, \ i,j\in\mathbb{N}, \ h\in(0,1]
\end{align*}
and denote $\lambda^{\Omega}=\lambda^{\Omega}_{,1}, \ \lambda^{\Omega}_{ij}=\lambda^{\Omega}_{ij,1}$. 
Next we define $X(I)$ and $X(\Omega)$ realisations of the perturbed Laplace operator witn Neumann boundary condition.
Define $\tilde{A_0}$ and  $\tilde{A_h}$ for $h\in(0,1]$ to be the unique unbounded linear operators such that
\begin{align*}
\tilde{A_0}&:X(I)\supset X_{fin}(I)\to X(I), \ \tilde{A_0}u_i=\partial^2_{x_1x_1}u_i=\lambda^I_iu_i,\\
\tilde{A_h}&:X(\Omega)\supset X_{fin}(\Omega)\to X(\Omega), \ \tilde{A_h}w_{ij}=-div J_h(w_{ij})=(\partial^2_{x_1x_1}+h^{-2}\partial^2_{x_2x_2})w_{ij}=\lambda^{\Omega}_{ij,h}w_{ij},
\end{align*}
and denote $\tilde{A}=\tilde{A_1}$.
\\
Define the unbounded linear operators $A_0$ and $A_h$ for $h\in(0,1]$:
\begin{align*}
A_0&:X(I)\supset D(A_0)\to X(I), \ D(A_0)=\{u\in X(I): \sum_{i\in\mathbb{N}}(1-\lambda_i^I)^2(u|u_i)_{X(I)}^2<\infty\}, \\  A_0u&=\sum_{i\in\mathbb{N}}\lambda_i^I(u|u_i)_{X(I)}u_i,\\
A_h&:X(\Omega)\supset D(A_h)\to X(\Omega), \ D(A_h)=\{w\in X(\Omega): \sum_{i,j\in\mathbb{N}}(1-\lambda_{ij,h}^{\Omega})^2(w|w_{ij})_{X(\Omega)}^2<\infty\},\\
A_hw&=\sum_{i,j\in\mathbb{N}}\lambda_{ij,h}^{\Omega}(w|w_{ij})_{X(\Omega)}w_{ij},
\end{align*}
and denote $A=A_1$. Observe that the domain $D(A_h)$ does not depend on $h$ since $\lambda^{\Omega}_{ij}\leq\lambda^{\Omega}_{ij,h}\leq h^{-2}\lambda^{\Omega}_{ij}$, i.e. $D(A_h)=D(A)$ for any $h\in(0,1]$.\\

Next we collect spectral properties of operators $A_0, A_h$. 
\begin{lem}\label{H2E:selfadjoint}
\

\begin{enumerate}
\item Operator $A_0$ (resp. $A_h$) is the closure of operator $\tilde{A_0}$ (resp. $\tilde{A_h}$).
\item Operators $A_0$ and $A_h$  are self-adjoint and nonpositive.
\item
The spectra of operators $A_0, A_h$ consist entirely of eigenvalues:
\begin{align}
\sigma(A_0)=\sigma_p(A_0)=\lambda^I, \quad
\sigma(A_h)=\sigma_p(A_h)=\lambda^{\Omega}_{,h}.
\end{align}
\item Resolvent operators $R(\lambda,A_0)$ and $R(\lambda,A_h)$ satisfy
\begin{align}
R(\lambda,A_0)u&=\sum_{i\in\mathbb{N}}(\lambda-\lambda_i^I)^{-1}(u|u_i)_{X(I)}u_i, \ {\rm for} \ \lambda\in\rho(A_0), \ u\in X(I),\\
R(\lambda,A_h)w&=\sum_{i,j\in\mathbb{N}}(\lambda-\lambda_{ij,h}^{\Omega})^{-1}(w|w_{ij})_{X(\Omega)}w_{ij}, \ {\rm for} \ \lambda\in\rho(A_h), \ w\in X(\Omega).
\end{align}
\end{enumerate}
\end{lem}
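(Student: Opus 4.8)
The plan is to establish the four claims of Lemma \ref{H2E:selfadjoint} in order, exploiting throughout the fact that $\{u_i\}$, $\{v_i\}$, $\{w_{ij}\}$ are complete orthonormal systems (Lemma \ref{H2E:basis}), so that the operators are literally diagonal in these bases. I will treat $A_h$ in detail (with $h=1$ giving $A_0$ on $I$ after the obvious modifications, and the argument for $A_0$ being identical in structure but with a single index).

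First, for the self-adjointness and nonpositivity (item 2), I would observe that $A_h$ as defined is the operator associated with the diagonal quadratic form: for $w=\sum c_{ij}w_{ij}\in D(A_h)$ one has $(A_hw|w)_{X(\Omega)}=\sum_{ij}\lambda^{\Omega}_{ij,h}|c_{ij}|^2\le 0$ since every $\lambda^{\Omega}_{ij,h}\le 0$, and symmetry on $D(A_h)$ is immediate from the real diagonal entries. To upgrade symmetry to self-adjointness I would show $D(A_h^*)\subseteq D(A_h)$: if $w\in D(A_h^*)$ then the map $z\mapsto (w|A_hz)$ is $X(\Omega)$-bounded, testing against $z=w_{ij}$ gives $|(w|w_{ij})\lambda^{\Omega}_{ij,h}|\le \|A_h^*w\|$, and combined with Bessel this forces $\sum(1-\lambda^{\Omega}_{ij,h})^2|(w|w_{ij})|^2<\infty$, i.e. $w\in D(A_h)$. (Alternatively, and more cheaply, one checks that $\pm 1\in\rho(A_h)$ — done in item 4 below — so a symmetric operator with $\mathrm{ran}(A_h-\lambda)=X(\Omega)$ for some non-real, or here simply $\pm1$, is self-adjoint; I would phrase it via the basic criterion that a symmetric operator whose range contains a total set and which has a bounded everywhere-defined inverse of a shift is self-adjoint.)

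Second, for the resolvent formula and the spectrum (items 3 and 4), fix $\lambda\notin\lambda^{\Omega}_{,h}$ with $\mathrm{dist}(\lambda,\lambda^{\Omega}_{,h})=:d>0$. Define $R_\lambda w=\sum_{ij}(\lambda-\lambda^{\Omega}_{ij,h})^{-1}(w|w_{ij})_{X(\Omega)}w_{ij}$; since $|\lambda-\lambda^{\Omega}_{ij,h}|^{-1}\le d^{-1}$, this is a bounded operator on $X(\Omega)$ with $\|R_\lambda\|\le d^{-1}$, and one checks directly on the orthonormal basis (hence on all of $X(\Omega)$, resp. $D(A_h)$, by density and closedness) that $(\lambda-A_h)R_\lambda = I$ and $R_\lambda(\lambda-A_h)=I$ on $D(A_h)$ — here one needs that $R_\lambda$ maps into $D(A_h)$, which follows because $\sum(1-\lambda^{\Omega}_{ij,h})^2|\lambda-\lambda^{\Omega}_{ij,h}|^{-2}|(w|w_{ij})|^2$ converges (the ratio $(1-\lambda^{\Omega}_{ij,h})/(\lambda-\lambda^{\Omega}_{ij,h})$ is bounded in $i,j$). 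Thus $\lambda\in\rho(A_h)$ and $R(\lambda,A_h)=R_\lambda$, giving $\sigma(A_h)\subseteq\lambda^{\Omega}_{,h}$. Conversely each $\lambda^{\Omega}_{ij,h}$ is an eigenvalue with eigenvector $w_{ij}$, so $\lambda^{\Omega}_{,h}\subseteq\sigma_p(A_h)\subseteq\sigma(A_h)$; since $\lambda^{\Omega}_{,h}$ is closed in $\mathbb{C}$ (its only accumulation point behaviour is $\to-\infty$) we get equality $\sigma(A_h)=\sigma_p(A_h)=\lambda^{\Omega}_{,h}$.

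Third, for item 1 (that $A_h$ is the closure of $\tilde A_h$), I note $A_h$ is closed: it is self-adjoint, or directly, if $w_n\to w$ in $X(\Omega)$ and $A_hw_n\to g$, then componentwise $\lambda^{\Omega}_{ij,h}(w_n|w_{ij})\to (g|w_{ij})$ and $(w_n|w_{ij})\to(w|w_{ij})$, forcing $(g|w_{ij})=\lambda^{\Omega}_{ij,h}(w|w_{ij})$ and (by Fatou applied to the defining series) $w\in D(A_h)$, $A_hw=g$. Since $\tilde A_h\subseteq A_h$, we have $\overline{\tilde A_h}\subseteq A_h$; for the reverse, given $w=\sum c_{ij}w_{ij}\in D(A_h)$ the partial sums $w_N=\sum_{i,j\le N}c_{ij}w_{ij}\in X_{fin}(\Omega)=D(\tilde A_h)$ satisfy $w_N\to w$ and $\tilde A_h w_N=\sum_{i,j\le N}\lambda^{\Omega}_{ij,h}c_{ij}w_{ij}\to A_hw$ (tail bounded by $\sum_{i,j>N}(\lambda^{\Omega}_{ij,h})^2|c_{ij}|^2\to0$), so $(w,A_hw)\in G(\overline{\tilde A_h})$.

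The main obstacle is really bookkeeping rather than depth: one must be careful that all the series manipulations (convergence of $R_\lambda w$ into $D(A_h)$, the identity $(1-\lambda^{\Omega}_{ij,h})^2|\lambda-\lambda^{\Omega}_{ij,h}|^{-2}$ bounded, the Fatou/closedness argument) are justified uniformly, and that the $h$-dependence is harmless because $\lambda^{\Omega}_{ij}\le\lambda^{\Omega}_{ij,h}\le h^{-2}\lambda^{\Omega}_{ij}$ makes $D(A_h)=D(A)$ as already remarked; I would present the $A_h$ case and then say "the proof for $A_0$ is identical, replacing $\{w_{ij}\}$ by $\{u_i\}$ and $\lambda^{\Omega}_{ij,h}$ by $\lambda^I_i$."
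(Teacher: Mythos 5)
Your proposal is correct and follows essentially the same route as the paper: direct verification of symmetry and nonpositivity from the diagonal representation, the inclusion $D(A_h^*)\subseteq D(A_h)$ obtained by testing against the basis vectors $w_{ij}$ and invoking Bessel, an explicitly defined candidate resolvent shown to be a bounded two-sided inverse mapping into $D(A_h)$, and the closure statement via partial sums in $X_{fin}(\Omega)$. The only cosmetic difference is that the paper proves closedness of $A_h$ by completeness of the graph-norm Hilbert space $X^1(\Omega)$ rather than your componentwise/Fatou argument, which is an equivalent amount of work.
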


\begin{proof}
We give the proof only for $A_h$ (for $A_0$ it is similar). Moreover it is clear that it is enough to consider the case $h=1$.\\
\textbf{Step 1} It is readilly seen that operator $A$ is an extension of operator $\tilde{A}$. To show that operator $A$ is closed let us consider an arbitrary sequence  $(w_n)_{n=1}^{\infty}\subset D(A)$ such that
\begin{align*}
w_n\to w, \ {\rm in} \ X(\Omega), \quad Aw_n\to v, \ {\rm in} \ X(\Omega),
\end{align*}
for certain $w,v\in X(\Omega)$. It follows that $(w_n)_{n=1}^{\infty}$ is a Cauchy sequence in the Hilbert space
$X^1(\Omega)=(D(A),(\cdot |\cdot)_{X^1(\Omega)})$, where
\begin{align*}
(w|w')_{X^1(\Omega)}=\sum_{i,j\in\mathbb{N}}(1-\lambda^{\Omega}_{ij})^2(w|w_{ij})_{X(\Omega)}(w'|w_{ij})_{X(\Omega)}, \ {\rm for} \ w,w'\in D(A).
\end{align*}
Thus $w\in D(A)$ and $Aw=v$ which proves that $A$ is closed. 
It is left to prove that $G(A)\subset cl_{X(\Omega)\times X(\Omega)}G(\tilde{A})$. To achieve this goal choose an arbitrary $(w,Aw)\in G(A)$. Then sequence $(w_n)_{n=1}^{\infty}$ defined by
$w_n=\sum_{i+j\leq n}(w|w_{ij})_{X(\Omega)}w_{ij}$ satisfies
\begin{align*}
w_n&\in D(\tilde{A}), \ {\rm for} \ n\geq 1,\\
(w_n,Aw_n)&\to (w,Aw), \ {\rm in} \ X(\Omega)\times X(\Omega),
\end{align*}
which completes the proof of 1.\\
\textbf{Step 2} To prove that operator $A$ is symmetric and nonpositive let us observe that for $w,w'\in D(A)$ we have
\begin{align*}
(Aw|w')_{X(\Omega)}&=\sum_{i,j\in\mathbb{N}}(Aw|w_{ij})_{X(\Omega)}(w'|w_{ij})_{X(\Omega)}=\sum_{i,j\in\mathbb{N}}\lambda_{ij}^{\Omega}(w|w_{ij})_{X(\Omega)}(w'|w_{ij})_{X(\Omega)}=\\
&=\sum_{i,j\in\mathbb{N}}(w|w_{ij})_{X(\Omega)}(Aw'|w_{ij})_{X(\Omega)}=(w|Aw')_{X(\Omega)},\\
(Aw|w)_{X(\Omega)}&=\sum_{i,j\in\mathbb{N}}\lambda_{ij}^{\Omega}(w|w_{ij})_{X(\Omega)}^2\leq 0.
\end{align*}
Moreover $A$ is densely defined since $X_{fin}(\Omega)\subset D(A)$ and $X_{fin}(\Omega)$ is dense in $X(\Omega)$ by Lemma \ref{H2E:basis}, thus it is possible to define the adjoint operator $A^*$. To prove that $A$ is self-adjoint it is left to prove that $D(A^*)=D(A)$, which is equivalent to $D(A^*)\subset D(A)$ since the opposite inclusion always holds. Choose arbitrary $w'\in D(A^*)$. By definition of $D(A^*)$ there exists unique $v\in X(\Omega)$ such that for every $w\in D(A)$ one has $(w'|Aw)_{X(\Omega)}=(v|w)_{X(\Omega)}$. Choosing $w=w_{ij}$ we obtain $\lambda_{ij}^{\Omega}(w'|w_{ij})_{X(\Omega)}=(v|w_{ij})_{X(\Omega)}$. Finally $w'\in D(A)$ since $\sum_{i,j}(1-\lambda_{ij}^{\Omega})^2(w'|w_{ij})^2_{X(\Omega)}\leq 2\sum_{i,j}(1+|\lambda_{ij}^{\Omega}|^2)(w'|w_{ij})^2_{X(\Omega)}=2(\n{w'}_{X(\Omega)}^2+\n{v}_{X(\Omega)}^2)<\infty$.\\
\textbf{Step 3} Since $Aw_{ij}=\lambda_{ij}^{\Omega}w_{ij}$ we obtain that $\lambda^{\Omega}\subset\sigma_p(A)$. For $\lambda\notin\lambda^{\Omega}$ define operator
\begin{align*}
B(\lambda,A)w=\sum_{i,j}(\lambda-\lambda_{ij}^{\Omega})^{-1}(w|w_{ij})_{X(\Omega)}w_{ij}.
\end{align*}
One checks easilly that $B(\lambda,A)\in\mathcal{L}(X(\Omega)),\ B(\lambda,A)w\in D(A)$ for $w\in X(\Omega)$. Moreover $\ B(\lambda,A)(\lambda-A)w=w$ for $w\in D(A)$ and $(\lambda-A)B(\lambda,A)w=w$ for $w\in X(\Omega)$ which is easilly seen for $w\in X_{fin}(\Omega)$ and by the density argument can be extended to $D(A)$ and $X(\Omega)$. Thus $\rho(A)=\mathbb{C}\setminus\lambda^{\Omega}, \  R(\lambda,A)=B(\lambda,A)$ for $\lambda\in\rho(A)$ and $\sigma(A)=\sigma_p(A)=\lambda^{\Omega}$.
\end{proof}

Since operators $A_0, A_h$ are self-adjoint and nonpositive they generate strongly continuous analytic semigroups  $e^{tA_0}$ and $e^{tA_h}:$
\begin{align}
e^{tA_0}u&=\sum_{i\in\mathbb{N}}e^{t\lambda_i^I}(u|u_i)_{X(I)}u_i, \ {\rm for} \ u\in X(I),\\
e^{tA_h}w&=\sum_{i,j\in\mathbb{N}}e^{t\lambda_{ij,h}^{\Omega}}(w|w_{ij})_{X(\Omega)}w_{ij}, \ {\rm for} \ w\in X(\Omega).
\end{align}

Since operators $I-A_0$ and $I-A$ are self-adjoint and positive one can define their fractional powers $(I-A_0)^s$ and $(I-A)^s$ for $s\geq 0$. Their domains $D((I-A_0)^s)$ and $D((I-A)^s)$ become Hilbert spaces (which we denote $X^s(I)$ and $X^s(\Omega)$) when equipped with appropriate scalar products. For $s\geq0$ spaces $X^s(I)$ and $X^s(\Omega)$ are defined as follows
\begin{align*}
X^s(I)&=\{u\in X(I): \sum_{i\in\mathbb{N}}(1-\lambda^I_i)^{2s}(u|u_i)_{X(I)}^2<\infty\},\\ 
(u|u')_{X^s(I)}&=\sum_{i\in\mathbb{N}}(1-\lambda_i^I)^{2s}(u|u_i)_{X(I)}(u'|u_i)_{X(I)}, \ {\rm for } \ u,u'\in X^s(I),\\
X^s(\Omega)&=\{w\in X(\Omega): \sum_{i,j\in\mathbb{N}}(1-\lambda^{\Omega}_{ij})^{2s}(w|w_{ij})_{X(\Omega)}^2<\infty\}, \\ 
(w|w')_{X^s(\Omega)}&=\sum_{i,j\in\mathbb{N}}(1-\lambda_{ij}^{\Omega})^{2s}(w|w_{ij})_{X(\Omega)}(w'|w_{ij})_{X(\Omega)}, \ {\rm for } \ w,w'\in X^s(\Omega).
\end{align*}

In the next lemma we give correspondence between scalar products in $X^s$ spaces.\\

\begin{lem}\label{H2E:lem6}
\
\begin{enumerate}
\item For $s_1>s_2\geq 0$ the following equalities hold
\begin{align}
(u|u_i)_{X^{s_1}(I)}&=(1-\lambda_i^I)^{2(s_1-s_2)}(u|u_i)_{X^{s_2}(I)}, \ {\rm for} \ u\in X^{s_1}(I),\label{H2E:lem61a}\\
(w|w_{ij})_{X^{s_1}(\Omega)}&=(1-\lambda_{ij}^{\Omega})^{2(s_1-s_2)}(w|w_{ij})_{X^{s_2}(\Omega)}, \ {\rm for} \ w\in X^{s_1}(\Omega).\label{H2E:lem61b}
\end{align} 
\item The set $\{u_i: i\in\mathbb{N}\}$ (resp. $\{w_{ij}: i,j\in\mathbb{N}\}$) is a complete orthogonal system in $X^s(I)$ (resp. $X^s(\Omega)$) for any $s\geq 0$. In particular if $s_1>s_2\geq0$ then $X^{s_1}(I)$ (resp. $X^{s_1}(\Omega)$) is a dense subspace of $X^{s_2}(I)$ (resp. $X^{s_2}(\Omega)$). 
\item For $U\in\{I,\Omega\}, \ s_1>s_2\geq0$ space $X^{s_1}(U)$ imbeds compactly into $X^{s_2}(U)$:
\begin{align}
X^{s_1}(U)\subset\subset X^{s_2}(U).\label{H2E:lem63}
\end{align}
\end{enumerate}
\end{lem}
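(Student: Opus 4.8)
The plan is to read everything off the explicit spectral representation of the $X^s$ norms, so that all three claims become elementary statements about the coefficient sequences $\big((u|u_i)_{X(I)}\big)_i$ and $\big((w|w_{ij})_{X(\Omega)}\big)_{ij}$. I will write the argument for $X^s(\Omega)$; the case of $X^s(I)$ is word-for-word the same with a single index and the eigenvalues $\lambda_i^I=-(i\pi/2)^2$. For part 1, I would just expand the defining bilinear form. Since $\{w_{ij}\}$ is orthonormal in $X(\Omega)$ by Lemma \ref{H2E:basis}, for every $w\in X^s(\Omega)$ one gets $(w|w_{kl})_{X^s(\Omega)}=\sum_{i,j}(1-\lambda_{ij}^{\Omega})^{2s}(w|w_{ij})_{X(\Omega)}(w_{kl}|w_{ij})_{X(\Omega)}=(1-\lambda_{kl}^{\Omega})^{2s}(w|w_{kl})_{X(\Omega)}$. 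Writing this identity with $s=s_1$ and with $s=s_2$ (legitimate since $1-\lambda_{ij}^{\Omega}\geq 1$ forces $\n{w}_{X^{s_2}(\Omega)}\leq\n{w}_{X^{s_1}(\Omega)}$, hence $X^{s_1}(\Omega)\subset X^{s_2}(\Omega)$) and eliminating the common factor $(w|w_{kl})_{X(\Omega)}$ yields \eqref{H2E:lem61b}, and similarly \eqref{H2E:lem61a}.

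For part 2, orthogonality is immediate from the formula just obtained, namely $(w_{kl}|w_{ij})_{X^s(\Omega)}=(1-\lambda_{ij}^{\Omega})^{2s}\delta_{ik}\delta_{jl}$, so $\{w_{ij}\}$ is an orthogonal system in $X^s(\Omega)$ whose members have norm $(1-\lambda_{ij}^{\Omega})^s$. For completeness: if $w\in X^s(\Omega)$ satisfies $(w|w_{ij})_{X^s(\Omega)}=0$ for all $i,j$, then $(1-\lambda_{ij}^{\Omega})^{2s}(w|w_{ij})_{X(\Omega)}=0$, hence $(w|w_{ij})_{X(\Omega)}=0$ for all $i,j$, and completeness of $\{w_{ij}\}$ in $X(\Omega)$ (Lemma \ref{H2E:basis}) gives $w=0$. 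Density of $X^{s_1}(\Omega)$ in $X^{s_2}(\Omega)$ then follows since $X_{fin}(\Omega)=lin(\{w_{ij}\})$ lies in $X^{s_1}(\Omega)$ — each $w_{ij}$ has a single nonzero coefficient, so $w_{ij}\in X^s(\Omega)$ for every $s\geq 0$ — and $X_{fin}(\Omega)$ is dense in $X^{s_2}(\Omega)$ because $\{w_{ij}\}$ is a complete orthogonal system there.

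Part 3 is the only step that requires genuine work, and I would prove it by direct sequential compactness, exploiting $\lambda_{ij}^{\Omega}=-(i\pi/2)^2-(j\pi)^2\to-\infty$ as $i+j\to\infty$. Let $(w_n)$ be bounded in $X^{s_1}(\Omega)$, say $\n{w_n}_{X^{s_1}(\Omega)}\leq M$, and set $a_{ij}^n=(w_n|w_{ij})_{X(\Omega)}$; since $\sum_{i,j}(1-\lambda_{ij}^{\Omega})^{2s_1}|a_{ij}^n|^2\leq M^2$, a diagonal extraction gives a subsequence along which $a_{ij}^n\to a_{ij}$ for each $(i,j)$, and Fatou's lemma yields $\sum_{i,j}(1-\lambda_{ij}^{\Omega})^{2s_1}|a_{ij}|^2\leq M^2$, so $w:=\sum_{i,j}a_{ij}w_{ij}\in X^{s_1}(\Omega)$. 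To show $w_n\to w$ in $X^{s_2}(\Omega)$, fix $\varepsilon>0$ and, since $s_2-s_1<0$ and $1-\lambda_{ij}^{\Omega}\to\infty$, pick $N$ with $(1-\lambda_{ij}^{\Omega})^{2(s_2-s_1)}<\varepsilon$ for $i+j>N$. Split $\n{w_n-w}_{X^{s_2}(\Omega)}^2$ into the finite sum over $i+j\leq N$, which tends to $0$ by coefficientwise convergence, and the tail over $i+j>N$, which is bounded by $\varepsilon\sum_{i,j}(1-\lambda_{ij}^{\Omega})^{2s_1}|a_{ij}^n-a_{ij}|^2\leq 2\varepsilon\big(\n{w_n}_{X^{s_1}(\Omega)}^2+\n{w}_{X^{s_1}(\Omega)}^2\big)\leq 4M^2\varepsilon$. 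Hence $\limsup_n\n{w_n-w}_{X^{s_2}(\Omega)}^2\leq 4M^2\varepsilon$, and letting $\varepsilon\to 0$ establishes \eqref{H2E:lem63} for $U=\Omega$; the case $U=I$ is identical with $\lambda_i^I\to-\infty$. (An alternative would be to identify $X^s(U)$ with the Sobolev space $W_2^{2s}(U)$ and quote Rellich–Kondrachov, but the spectral argument is self-contained and sidesteps any discussion of boundary conditions.)
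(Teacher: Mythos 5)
Your proposal is correct and follows essentially the same route as the paper: all three parts are read off the spectral representation of the $X^s$ scalar products, with part 3 proved by splitting the norm into finitely many low modes plus a tail controlled by $(1-\lambda)^{2(s_2-s_1)}\to 0$. The only cosmetic differences are that you establish completeness in part 2 via triviality of the orthogonal complement rather than convergence of partial sums, and in part 3 you extract the convergent subsequence by a diagonal argument on coefficients plus Fatou instead of weak sequential compactness in $X^{s_1}(U)$; both variants are sound.
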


\begin{proof}
\
\\
\textbf{Step 1} We give the proof for $U=I$ only as the one for $U=\Omega$ is analogous.
For $s\geq0$ and $u\in X^s(I)$ we have
\begin{align*}
(u|u_i)_{X^{s}(I)}=\sum_{k\in\mathbb{N}}(1-\lambda_k^I)^{2s}(u|u_k)_{X(I)}(u_i|u_k)_{X(I)}=(1-\lambda_i^I)^{2s}(u|u_i)_{X(I)},
\end{align*}
from which \eqref{H2E:lem61a} follows. The proof of \eqref{H2E:lem61b} is similar.\\
\textbf{Step 2} Orthogonality in $X^s$ follows from \eqref{H2E:lem61a},\eqref{H2E:lem61b} and Lemma \ref{H2E:basis} . For the proof of completeness of $\{u_i: i\in\mathbb{N}\}$ notice that if $u\in X^s(I)$ then denoting $u_n=\sum_{i=0}^n(u|u_i)_{X(I)}u_i$ one has
$\lim_{n\to\infty}\n{u-u_n}_{X^s(I)}^2=\lim_{n\to\infty}\sum_{k\geq n+1}(1-\lambda^I_i)^{2s}(u|u_k)^2_{X(I)}=0$. Similarly one prooves completeness of $\{w_{ij}: i,j\in\mathbb{N}\}$.\\
\textbf{Step 3}\\
Let $(u^n)_{n=1}^{\infty}$ be a bounded sequence in $X^{s_1}(I)$. Denote $M=\sup\{\n{u_n}_{X^{s_1}(I)}:\ n\in\mathbb{N}\}$. Since $X^{s_1}(I)$ is a Hilbert space we can choose a subsequence $(u^{n_k})_{k=1}$ weakly convergent in $X^{s_1}(I)$ to certain $u^{\infty}\in X^{s_1}(I)$. In particular
\begin{align*}
\lim_{k\to\infty}(u^{n_k}|u_i)_{X(I)}=(u^{\infty}|u_i)_{X(I)}, \ {\rm for} \  i\in\mathbb{N},\\
\n{u^{\infty}}_{X^{s_1}(I)}\leq M.
\end{align*}
For any $i_0\in\mathbb{N}_+$ we estimate
\begin{align*}
&\n{u^{n_k}-u^{\infty}}_{X^{s_2}(I)}^2\leq\sum_{i=0}^{i_0-1}(1-\lambda^I_i)^{2s_2}|(u^{n_k} -u^{\infty}| u_i)_{X(I)}|^2+(1-\lambda^I_{i_0})^{2s_2-2s_1}\sum_{i=i_0}^{\infty}(1-\lambda^I_i)^{2s_1}|(u^{n_k} -u^{\infty}|u_i)_{X(I)}|^2\\
&\leq\sum_{i=0}^{i_0-1}(1-\lambda^I_i)^{2s_2}|(u^{n_k} -u^{\infty} | u_i)_{X(I)}|^2+(1-\lambda^I_{i_0})^{2s_2-2s_1}\n{u^{n_k}-u^{\infty}}_{X^{s_1}(I)}^2\\
&\leq\sum_{i=0}^{i_0-1}(1-\lambda^I_i)^{2s_2}|(u^{n_k} -u^{\infty} | u_i)_{X(I)}|^2+4M^2(1-\lambda^I_{i_0})^{2s_2-2s_1}. 
\end{align*}
Fix $\epsilon>0$. Choose $i_0\in\mathbb{N}_+$ such that $4M^2(1-\lambda^I_{i_0})^{2s_2-2s_1}\leq\epsilon^2$. Then $\limsup_{k\to\infty}\n{u^{n_k}-u^{\infty}}_{X^{s_2}(I)}\leq\epsilon$ and consequently $\lim_{k\to\infty}\n{u^{n_k}-u^{\infty}}_{X^{s_2}(I)}=0$.
\end{proof}

Next we extend the scale of Hilbert spaces $X^s(I), X^s(\Omega)$ to  $s\in[-1,0)$ by duality. More precisely for any $s\in[-1,0)$ we define $X^s(I)=(X^{-s}(I))^*, X^s(\Omega)=(X^{-s}(\Omega))^*$. Then for $s\in[-1,0)$ Banach spaces $X^s$ become Hilbert spaces when equipped with the following scalar products
\begin{align}
(u|u')_{X^{s}(I)}=\sum_{i\in\mathbb{N}}(1-\lambda_i^I)^{2s}\Big<u,u_i\Big>_{(X^s(I),X^{-s}(I))}\Big<u',u_i\Big>_{(X^s(I),X^{-s}(I))}, \ {\rm for} \ u,u'\in X^s(I),\label{H2E:scalprodnegI}\\
(w|w')_{X^{s}(\Omega)}=\sum_{i,j\in\mathbb{N}}(1-\lambda_{ij}^{\Omega})^{2s}\Big<w,w_{ij}\Big>_{(X^s(\Omega),X^{-s}(\Omega))}\Big<w',w_{ij}\Big>_{(X^s(\Omega),X^{-s}(\Omega))}, \ {\rm for} \ w,w'\in X^s(\Omega)\label{H2E:scalprodnegO}.
\end{align}
Observe that assertions of Lemma \ref{H2E:lem6} are still valid without assumming that $s,s_1,s_2$ are nonnegative.\\

In the following lemma we give relation between $X^s$ spaces and complex interpolation.\\

\begin{lem}\label{H2E:reiteration}
$[X^{s_1}(U),X^{s_2}(U)]_{\theta}=X^{s_1(1-\theta)+s_2\theta}(U)$ for $s_1,s_2\geq-1, \ \theta\in[0,1], \ U\in\{I,\Omega\}$. 
\end{lem}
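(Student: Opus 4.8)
The plan is to reduce the claim to a known reiteration-type result for complex interpolation of Hilbert scales generated by a positive self-adjoint operator. Recall that by construction, for $U\in\{I,\Omega\}$ and $s\geq 0$, the space $X^s(U)$ is the domain $D(\Lambda_U^s)$ of the $s$-th fractional power of the positive self-adjoint operator $\Lambda_U := I-A_0$ (for $U=I$) or $\Lambda_U := I-A$ (for $U=\Omega$), equipped with the graph norm of $\Lambda_U^s$; and for $s\in[-1,0)$ it is defined by duality. Since $\Lambda_U$ is positive self-adjoint with spectral resolution carried by the orthonormal basis $\{u_i\}$ (resp. $\{w_{ij}\}$) with eigenvalues $1-\lambda_i^I\geq 1$ (resp. $1-\lambda_{ij}^\Omega\geq 1$), the whole scale $\{X^s(U)\}_{s\in[-1,1]}$ is a so-called Hilbert scale, and one may invoke the standard fact (see e.g. Triebel \cite{Tri}, or Lunardi-type references on interpolation of fractional power domains) that for a positive self-adjoint operator $\Lambda$ on a Hilbert space $X$ one has $[D(\Lambda^{a}),D(\Lambda^{b})]_\theta = D(\Lambda^{a(1-\theta)+b\theta})$ for $a,b\geq 0$, $\theta\in[0,1]$, with equivalence (in fact equality of norms here, by the spectral theorem).

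The steps I would carry out are as follows. First I would treat the case $s_1,s_2\geq 0$: here $X^{s_k}(U)=D(\Lambda_U^{s_k})$ isometrically, and the spectral-theorem computation shows directly that the complex interpolation space $[D(\Lambda_U^{s_1}),D(\Lambda_U^{s_2})]_\theta$ coincides with $D(\Lambda_U^{s_1(1-\theta)+s_2\theta})=X^{s_1(1-\theta)+s_2\theta}(U)$; concretely, using the diagonalization $\Lambda_U \cong$ multiplication by the eigenvalue sequence on $\ell_2$ with a weight, the problem becomes interpolation of weighted $\ell_2$ spaces $\ell_2(\mu_i^{s_1})$ and $\ell_2(\mu_i^{s_2})$, for which $[\ell_2(w_0),\ell_2(w_1)]_\theta=\ell_2(w_0^{1-\theta}w_1^\theta)$ is classical (Calder\'on; see \cite{Tri}). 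Second, I would extend to negative exponents by duality: since $X^{-s}(U)=(X^s(U))^*$ for $s\in(0,1]$ and complex interpolation commutes with duality for reflexive spaces (both endpoint spaces here are Hilbert, hence reflexive), $[X^{s_1}(U),X^{s_2}(U)]_\theta$ with one or both $s_k$ negative is handled by dualizing the already-established nonnegative case, again reducing to the weighted-$\ell_2$ identity which is symmetric under $w\mapsto w^{-1}$. Third, I would note that the mixed case $s_1\geq 0>s_2$ (or vice versa) follows either by the same weighted-$\ell_2$ reduction directly, or by the reiteration theorem for complex interpolation combined with the two cases already done.

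The cleanest route, and the one I expect the author takes, is to bypass the case distinction entirely by observing that for \emph{all} $s\in[-1,1]$ the space $X^s(U)$ is isometrically the weighted sequence space $\ell_2(\mathbb{N};(\mu_i^{s})^2)$ (resp. double-indexed) via the unitary $u\mapsto((u|u_i)_{X(U)})_i$ from Lemma \ref{H2E:lem6}, where $\mu_i=1-\lambda_i^I$ etc.; then the statement is exactly Calder\'on's formula $[\ell_2(w_0),\ell_2(w_1)]_\theta=\ell_2(w_0^{1-\theta}w_1^\theta)$ applied with $w_k=\mu^{2 s_k}$, giving $w_0^{1-\theta}w_1^\theta=\mu^{2(s_1(1-\theta)+s_2\theta)}$. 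The main obstacle — really the only non-bookkeeping point — is justifying that $X^s(U)$ for negative $s$, which is \emph{defined} abstractly as a dual space, is indeed isometric to that weighted $\ell_2$ space with the scalar product \eqref{H2E:scalprodnegI}--\eqref{H2E:scalprodnegO}; this requires identifying the duality pairing $\langle u,u_i\rangle_{(X^s,X^{-s})}$ with the $X(U)$-inner product coefficients, which is routine from the definitions but must be stated carefully. Once that identification is in place, the interpolation identity is immediate from the classical scalar-weight result, with equality (not just equivalence) of norms.
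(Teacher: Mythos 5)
Your proposal is correct and follows essentially the same route as the paper: the author defines the coefficient map $\Phi(w)=\bigl(\bigl<w,w_{ij}\bigr>_{(X^{-1}(\Omega),X^1(\Omega))}\bigr)_{i,j}$, observes that it is an isometric isomorphism from $X^s(U)$ onto the weighted sequence space $l_2(Z_{ij}^s)$ for every $s\geq-1$ (exactly the identification you single out as the only non-trivial point, and which the paper likewise states without detailed verification), and then concludes via the interpolation identity for $\ell_2$-sums of Hilbert spaces from [\cite{Tri}, Chap.\ 1.18.1], which in the one-dimensional-fiber case is precisely Calder\'on's weighted-$\ell_2$ formula you invoke.
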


\begin{proof}
We provide the proof for $U=\Omega$ as the one for $U=I$ can be carried out similarly.
For $s\geq -1$ and $i,j\in\mathbb{N}$ define Hilbert spaces $Z_{ij}^s=(\mathbb{R},(\cdot|\cdot)_{Z_{ij}^s})$, where $(a|a')_{Z_{ij}^s}=(1-\lambda_{ij}^{\Omega})^{2s}aa'$ for $a,a'\in\mathbb{R}$ and $l_2(Z_{ij}^s)=(\{\mathbf{a}=(a_{ij})_{i,j\in\mathbb{N}}: a_{ij}\in\mathbb{R}, \ \sum_{i,j\in\mathbb{N}}\n{a_{ij}}_{Z_{ij}^s}^2<\infty\}, (\cdot |\cdot)_{l_2(Z_{ij}^s)})$, where $(\mathbf{a}|\mathbf{a'})_{l_2(Z_{ij}^s)}=\sum_{i,j\in\mathbb{N}}(a_{ij}|a_{ij}')_{Z_{ij}^s}$ for $\mathbf{a},\mathbf{a'}\in l_2(Z_{ij}^s)$.
Define map
\begin{align*}
\Phi(w)&=\Big(\Big<w,w_{ij}\Big>_{(X^{-1}(\Omega),X^1(\Omega))}\Big)_{i,j\in\mathbb{N}}, \ {\rm for} \ w\in X^{-1}(\Omega).
\end{align*}
Observe that $\Phi$ is an isometric  isomorphism between $X^s(\Omega)$ and $l_2(Z_{ij}^s)$ for any $s\geq-1$. This fact allows as to justify the first and the fourth equality in 
\begin{align*}
[X^{s_1}(\Omega),X^{s_2}(\Omega)]_{\theta}=[l_2(Z_{ij}^{s_1}),l_2(Z_{ij}^{s_2})]_{\theta}=l_2([Z_{ij}^{s_1},Z_{ij}^{s_2}]_{\theta})=l_2(Z_{ij}^{s_1(1-\theta)+s_2\theta})=X^{s_1(1-\theta)+s_2\theta}(\Omega),
\end{align*}
while the second equality follows from [\cite{Tri}, Chap. 1.18.1, Theorem].
\end{proof}

In the next lemma we characterise $X^s$ spaces as Sobolev-Slobodecki spaces with Neumann boundary condition.\\

\begin{lem}\label{H2E:characterisation}
For $s\in[0,1], \ U\in\{I,\Omega\}$ we have the following characterisation of the spaces $X^s(U)$:
\begin{align}
X^s(U)&=\begin{cases}W^{2s}_2(U) \ {\rm if}\ 0\leq s<3/4\\
W^{2s}_{2,N}(U)=\{u\in W^{2s}_2(U): \nabla u\cdot\nu=0 \ {\rm on} \ \partial U\} \ {\rm if}\ 3/4<s\leq1
\end{cases},
X^{3/4}(U)&\subset W^{2s}_2(U).
\end{align}
\end{lem}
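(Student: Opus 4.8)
The plan is to reduce the claim to the two endpoints $s=0$ and $s=1$ and to obtain every intermediate value of $s$ by complex interpolation through Lemma~\ref{H2E:reiteration}. The endpoint $s=0$ is immediate: by definition $X^0(U)=X(U)=L_2(U)=W^0_2(U)$.

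For $s=1$ I would identify the spectrally defined operator $A_0$ on $I$ (resp.\ $A$ on $\Omega$) with the \emph{variational} Neumann Laplacian, i.e.\ the self-adjoint operator $\mathcal N_U$ associated with the closed, symmetric, nonnegative form $a_U(u,v)=\int_U\nabla u\cdot\nabla v$ defined on the form domain $W^1_2(U)$. The functions $u_i$ (resp.\ $w_{ij}$) lie in $C^\infty(\ov U)$, satisfy the homogeneous Neumann condition on each edge of $\partial U$ in the classical sense, and satisfy $-\Delta u_i=-\lambda_i^I u_i$ (resp.\ $-\Delta w_{ij}=-\lambda_{ij}^{\Omega}w_{ij}$); Green's identity on the Lipschitz domain $U$ therefore shows that they are eigenfunctions of $\mathcal N_U$ with the same eigenvalues. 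Since by Lemma~\ref{H2E:basis} they form a complete orthonormal system, $-\mathcal N_U$ and $A_0$ (resp.\ $A$) share a complete orthonormal eigenbasis with matching eigenvalues, hence by the spectral theorem they coincide, and in particular $D(A)=D(\mathcal N_U)$. Because $I$ is an interval and $\Omega$ is a bounded convex polygon, the standard $W^2_2$-regularity for the Neumann problem (elementary on $I$; Grisvard's theory of elliptic problems on convex domains on $\Omega$ — alternatively one verifies directly that $\sum_{i,j}(1-\lambda^{\Omega}_{ij})^2|(w|w_{ij})_{X(\Omega)}|^2$ is equivalent to $\n{w}_{W^2_2(\Omega)}^2$ by computing the $L_2$-norms of $\partial^2_{x_1x_1}w_{ij}$, $\partial^2_{x_1x_2}w_{ij}$, $\partial^2_{x_2x_2}w_{ij}$) gives $D(\mathcal N_U)=W^2_{2,N}(U)$. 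Hence $X^1(U)=D((I-A)^1)=D(A)=W^2_{2,N}(U)$ with equivalent norms.

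With the endpoints available, Lemma~\ref{H2E:reiteration} yields, for every $s\in[0,1]$,
\[
X^s(U)=[X^0(U),X^1(U)]_s=[L_2(U),W^2_{2,N}(U)]_s,
\]
so it remains to compute this interpolation space within the Sobolev--Slobodecki scale. This is the classical interpolation result for Sobolev spaces cut out by a boundary condition (see \cite{Tri}, Sect.~4.3.3, or the corresponding results of Grisvard and of Lions--Magenes): the Neumann condition is expressed through the first-order normal-derivative trace, which is bounded from $W^m_2(U)$ onto $W^{m-3/2}_2(\partial U)$ precisely for $m>3/2$; consequently the constraint is preserved by interpolation when $2s>3/2$ and lost when $2s<3/2$, giving $[L_2(U),W^2_{2,N}(U)]_s=W^{2s}_2(U)$ for $0\le s<3/4$ and $=W^{2s}_{2,N}(U)$ for $3/4<s\le1$. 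At the critical value $2s=3/2$ the normal-derivative trace fails to be bounded on $W^{3/2}_2(U)$, so the constraint cannot be imposed at all; the interpolation space is then a proper (dense) subspace of $W^{3/2}_2(U)$, which is exactly the stated inclusion $X^{3/4}(U)\subset W^{3/2}_2(U)$.

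The main obstacle is this last, borderline, step, hand in hand with the $s=1$ identification on the rectangle. On one side one must be careful that the spectral operator $A$ genuinely is the variational Neumann Laplacian and that $W^2_2$-regularity holds up to the \emph{corners} of $\Omega$, where convexity of the rectangle is what makes it work (or, if one wants to stay self-contained, the explicit Fourier computation on the cosine basis). On the other side, the whole content of the lemma is that the Neumann constraint ``survives'' interpolation down to, but not including, $s=3/4$, and at the threshold only an embedding can be retained; pinning down precisely that $s=3/4$ is where the normal-derivative trace ceases to be a bounded operator is the delicate point, everything else being routine bookkeeping of equivalent norms.
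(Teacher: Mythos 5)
Your overall strategy coincides with the paper's: identify the endpoint $X^1(U)=W^2_{2,N}(U)$ (for $U=\Omega$ either by the explicit cosine-basis computation, which is exactly what the paper does, or by convex-domain elliptic regularity), then use Lemma \ref{H2E:reiteration} to reduce the whole statement to computing $[L_2(U),W^2_{2,N}(U)]_s$, with the threshold $s=3/4$ coming from the mapping properties of the normal-derivative trace. The one step where your argument has a genuine hole is precisely the identification of $[L_2(\Omega),W^2_{2,N}(\Omega)]_s$ on the rectangle. The references you invoke (\cite{Tri}, Lions--Magenes, Grisvard's characterisation of interpolation spaces with boundary conditions, and \cite{Fuj}) are proved for bounded domains with smooth boundary or for a half-space; none of them applies as stated to $\Omega$, whose boundary has four corners. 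Saying that ``the constraint is preserved by interpolation when $2s>3/2$'' is the conclusion one wants, not an argument that survives the corners: the localisation needed to reduce to a model domain produces, near each corner, the quarter-plane $(\mathbb{R}_+)^2$ rather than a half-space, and the half-space result must first be transported there. The paper fills exactly this gap: it adapts Fujiwara's proof to $U=(\mathbb{R}_+)^2$ by replacing the single even reflection with the double reflection $\nu'u(x_1,x_2)=u(|x_1|,|x_2|)$ and the corresponding averaging projection $\pi'$, obtains \eqref{H2E:interp0} on the quarter-plane, and only then globalises by a partition of unity subordinate to a covering of $\Omega$ by four open sets each containing exactly one corner. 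Without this quarter-plane step your citation chain does not close.

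Everything else in your proposal is sound and matches the paper: the trivial $s=0$ endpoint, the identification of the spectrally defined operator with the variational Neumann Laplacian (the paper's integration-by-parts argument proving $W^2_{2,N}(\Omega)\subset X^1(\Omega)$ is the same uniqueness statement in different clothing, and its norm-equivalence computation on $X_{fin}(\Omega)$ is the ``explicit Fourier computation'' you mention as an alternative), the use of Lemma \ref{H2E:reiteration}, and the interpretation of the borderline case $s=3/4$ as an inclusion only. You correctly flagged the corner issue for the $s=1$ regularity, but the same issue recurs, and is in fact the harder one, at the interpolation stage.
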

 
\begin{proof}
The case when $U$ is an open bounded domain of $\mathbb{R}^n$ with a smooth boudary (in particular $U=I$) or $U$ is a half space - $U=\mathbb{R}_+\times\mathbb{R}^{n-1}$ was treated in [\cite{Fuj}, Theorem 2]. The case when $U=\Omega$ we divide in several steps.\\
\textbf{Step 1} We will show that 
\begin{align}
X^{1}(\Omega)=W^{2}_{2,N}(\Omega)\label{H2E:Step1goal}.
\end{align}
Denote
\begin{align*}
\overline{u}_i(x_1)=c_{1i}\sin(i\pi(x_1+1)/2), \ x_1\in I, \quad 
\overline{v}_i(x_2)=c_{2i}\sin(i\pi x_2), \ x_2\in I_+.
\end{align*}
Reasoning similarly as in the proof of  Lemma \ref{H2E:basis} we obtain that the set $\{\overline{u}_i : i\in\mathbb{N}_+\}$ (resp.  $\{\overline{v}_i : i\in\mathbb{N}_+\}$, $\{\overline{u}_i\otimes\overline{v}_j : i,j\in\mathbb{N}_+\}, \{\overline{u}_i\otimes v_j : i,\in\mathbb{N}_+, j\in\mathbb{N}\}, \{u_i\otimes\overline{v}_j : i\in\mathbb{N}_+,j\in\mathbb{N}\}$) is a complete orthonormal system in $X(I)$ (resp. $X(I_+), X(\Omega), X(\Omega), X(\Omega)$). Compute
\begin{align*}
\partial_{x_1}w_{ij}&=-(i\pi/2)\overline{u}_i\otimes v_j=-\sqrt{|\lambda^I_i|}\overline{u}_i\otimes v_j, \quad
&&\partial_{x_2}w_{ij}=-(j\pi)u_i\otimes\overline{v}_j=-\sqrt{|\lambda^{I_+}_j|}u_i\otimes\overline{v}_j\\
\partial^2_{x_1x_1}w_{ij}&=-(i\pi/2)^2w_{ij}=\lambda^I_iw_{ij}, \quad
&&\partial^2_{x_2x_2}w_{ij}=-(j\pi)^2w_{ij}=\lambda^{I_+}_j w_{ij}\\
\partial^2_{x_1x_2}w_{ij}&=\partial^2_{x_2x_1}w_{ij}=(i\pi/2)(j\pi)\overline{u}_i\otimes\overline{v}_j=\sqrt{|\lambda^{I}_i||\lambda^{I_+}_j|}\overline{u}_i\otimes\overline{v}_j.
\end{align*}
Observe that $X_{fin}(\Omega)\subset W^2_{2,N}(\Omega)$. Let $w\in X_{fin}(\Omega)$.  Using the  triangle inequality and $(a+b)^2\leq 2(a^2+b^2)$ we estimate
\begin{align*}
\n{w}_{X^1(\Omega)}^2=\n{(I-\Delta)w}_{L_2(\Omega)}^2\leq 2(\n{w}^2_{L_2(\Omega)}+2(\n{\partial^2_{x_1x_1}w}^2_{L_2(\Omega)}
+\n{\partial^2_{x_2x_2}w}^2_{L_2(\Omega)}))\leq 4\n{w}_{W^2_2(\Omega)}^2.
\end{align*}
On the other hand
\begin{align*}
\n{w}_{W^2_2(\Omega)}^2&=\n{w}_{L_2(\Omega)}^2
+\sum_{i=1}^2\n{\partial_{x_i}w}_{L_2(\Omega)}^2
+\sum_{i=1}^2\sum_{j=1}^2\n{\partial^2_{x_ix_j}w}_{L_2(\Omega)}^2\\
&=
\sum_{i,j}(1+|\lambda^I_i|+|\lambda^{I_+}_j|+|\lambda^I_i|^2+|\lambda^{I_+}_j|^2+2|\lambda^I_i||\lambda^{I_+}_j|)(w|w_{ij})^2_{X(\Omega)}\\
&\leq 2\sum_{i,j}(1-\lambda_i^I-\lambda_j^{I_+})^2(w|w_{ij})^2_{X(\Omega)}\\
&=2\sum_{i,j}(1-\lambda_{ij}^{\Omega})^2(w|w_{ij})^2_{X(\Omega)}=2\n{w}_{X^1(\Omega)}^2.
\end{align*}
Thus norms $\n{\cdot}_{W^2_2(\Omega)}$ and $\n{\cdot}_{X^1(\Omega)}$ are equivalent on $X_{fin}(\Omega)$.\\
In particular $X^1(\Omega)=cl_{X^1}(X_{fin}(\Omega))=cl_{W^2_2}(X_{fin}(\Omega))\subset W^2_{2,N}(\Omega)$. It is left to prove that $W^2_{2,N}(\Omega)\subset X^1(\Omega)$. Choose arbitrary $u\in W^2_{2,N}(\Omega)$ and let $f=u-\Delta u$. Then $f\in X(\Omega)$. Let $w=R(1,A)f$. Since $X^1(\Omega)\subset W^2_{2,N}(\Omega)$ thus $w\in W^2_{2,N}(\Omega)$ and $f=w-\Delta w$. We have  
\begin{align*}
0=\int_{\Omega}(w-u)(f-f)=\int_{\Omega}(w-u)^2-\int_{\Omega}(w-u)\Delta(w-u)=\int_{\Omega}(w-u)^2+\int_{\Omega}|\nabla(w-u)|^2,
\end{align*}
since $\nabla(w-u)\cdot\nu=0$ on $\partial\Omega$.
Finally we obtain that $u=w\in X^1(\Omega)$.
\\
\textbf{Step 2} We will show that for $U\in\{(\mathbb{R}_+)^2,\Omega\}$:
\begin{align}
[L_2(U),W^2_{2,N}(U)]_s
=\begin{cases}W^{2s}_2(U) \ {\rm if}\ 0\leq s<3/4\\
W^{2s}_{2,N}(U)\ {\rm if}\ 3/4<s\leq1.
\end{cases}\label{H2E:interp0}
\end{align}

To prove \eqref{H2E:interp0} for $U=(\mathbb{R}_+)^2$ we proceed as in the proof of [\cite{Fuj}, Theorem 2] for the case $U=\mathbb{R}_+\times\mathbb{R}$ substituting functions $\pi$ and $\nu$ from that proof by
\begin{align*}
\nu'&: L_2((\mathbb{R}_+)^2)\to L_2(\mathbb{R}^2), \  \nu'u(x_1,x_2)=u(|x_1|,|x_2|),\\
\pi'&:L_2(\mathbb{R}^2)\to L_2((\mathbb{R}_+)^2), \  \pi'v(x_1,x_2)=\frac{1}{4}\sum_{\epsilon_1,\epsilon_2\in\{-1,1\}}v(\epsilon_1x_1,\epsilon_2x_2).
\end{align*}

Observe that the only nonsmooth points of rectangle $\Omega$ are the corners. We choose the covering of $\Omega$ by four open subsets $\{\Omega_i\}$ such that each of them contains exactly one corner. Then a standard argument involving partition of unity inscribed in the covering $\{\Omega_i\}$ allows us to adapt \eqref{H2E:interp0} from $U=(\mathbb{R}_+)^2$ to $U=\Omega$.

\textbf{Step 3}
Using Lemma \ref{H2E:reiteration} and \eqref{H2E:Step1goal} we obtain $X^s(\Omega)=[X(\Omega),X^1(\Omega)]_s=[L_2(\Omega),W^2_{2,N}(\Omega)]_s$ from which Lemma \ref{H2E:characterisation} for $U=\Omega$ follows due to \eqref{H2E:interp0}.
\end{proof} 

In the next lemma we collect imbeddings of $X^s$ spaces into Lebesgue spaces $L_p$ and the space of continuous functions.\\  

\begin{lem}\label{H2E:imbeddings}
We have the following imbeddings
\begin{align}
X^s(I)
\subset\subset\begin{cases}C(\overline{I}) \ {\rm if}\ 1/4<s\\
L_p(I)\ {\rm if}\ 0\leq s<1/4, \ 1\leq p< 2/(1-4s)
\end{cases}
,X^{s}(I)\subset L_{2/(1-4s)}(I),
\label{H2E:imbedI}
\end{align}
\begin{align}
X^s(\Omega)
\subset\subset\begin{cases}C(\overline{\Omega}) \ {\rm if}\ 1/2<s\\
L_p(\Omega)\ {\rm if}\ 0\leq s<1/2, \ 1\leq p< 2/(1-2s)
\end{cases}
,X^{s}(\Omega)\subset L_{2/(1-2s)}(\Omega).
\label{H2E:imbedO}
\end{align}
\end{lem}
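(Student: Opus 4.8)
The plan is to reduce every imbedding in \eqref{H2E:imbedI}--\eqref{H2E:imbedO} to the classical Sobolev imbedding and Rellich--Kondrachov theorems for Sobolev--Slobodecki spaces on bounded Lipschitz domains (see \cite{Tri}), using the identification of the $X^s$ spaces furnished by Lemma \ref{H2E:characterisation}. The decisive observation is that for every $s\in[0,1]$ the space $X^s(U)$ embeds continuously into $W^{2s}_2(U)$ (with equality for $0\le s<3/4$, and for $3/4<s\le 1$ because $W^{2s}_{2,N}(U)$ is a closed subspace of $W^{2s}_2(U)$); hence the Neumann constraint is irrelevant and it suffices to prove the stated imbeddings with $X^s(U)$ replaced by $W^{2s}_2(U)$. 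Writing $\sigma=2s$ and $n=\dim U$ (so $n=1$ for $U=I$ and $n=2$ for $U=\Omega$), the Sobolev theorem gives $W^\sigma_2(U)\subset L_q(U)$ for all $1\le q\le \frac{2n}{n-2\sigma}$ when $\sigma<n/2$, and $W^\sigma_2(U)\subset C(\overline U)$ when $\sigma>n/2$. A direct computation shows $\frac{2n}{n-2\sigma}=\frac{2}{1-4s}$ for $U=I$ and $=\frac{2}{1-2s}$ for $U=\Omega$, while the threshold $\sigma=n/2$ corresponds to $s=1/4$ for $U=I$ and to $s=1/2$ for $U=\Omega$; in particular the two non-compact (critical) imbeddings $X^s(I)\subset L_{2/(1-4s)}(I)$ and $X^s(\Omega)\subset L_{2/(1-2s)}(\Omega)$ follow at once.

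To obtain the compact imbeddings I would stay within the paper's own machinery rather than invoke fractional Rellich--Kondrachov. For the sub-critical Lebesgue ranges ($0\le s<1/4$, $1\le p<\frac{2}{1-4s}$ for $I$; $0\le s<1/2$, $1\le p<\frac{2}{1-2s}$ for $\Omega$), choose $s'$ with $0\le s'<s$ but still large enough that the already-established continuous imbedding $X^{s'}(U)\subset L_{q^{*}(s')}(U)$, with $q^{*}(s')=\frac{2n}{n-4s'}$, satisfies $q^{*}(s')\ge p$; this is possible precisely because $p$ is sub-critical for the exponent $s$ (explicitly, $p<q^{*}(s)$ is equivalent to $s>\frac{n(p-2)}{4p}$, so any $s'\in(\max\{0,\frac{n(p-2)}{4p}\},s)$ works). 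Since $I$ and $\Omega$ are bounded, $L_{q^{*}(s')}(U)\subset L_p(U)$ continuously, and composing with the compact imbedding $X^s(U)\subset\subset X^{s'}(U)$ from Lemma \ref{H2E:lem6} (the composition of a bounded and a compact operator is compact) yields $X^s(U)\subset\subset L_p(U)$. For the imbeddings into $C(\overline U)$ ($1/4<s$ for $I$, $1/2<s$ for $\Omega$) pick $s'$ with $1/4<s'<s$ (resp.\ $1/2<s'<s$); then $X^{s'}(U)\subset W^{2s'}_2(U)\subset C(\overline U)$ continuously since $2s'>n/2$, and again $X^s(U)\subset\subset X^{s'}(U)$ gives $X^s(U)\subset\subset C(\overline U)$.

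I do not anticipate any genuine obstacle: once Lemmas \ref{H2E:characterisation} and \ref{H2E:lem6} are available the argument is essentially bookkeeping. The two points that deserve a little care are (i) the clean separation between the critical imbeddings, which are only continuous, and the sub-critical ones, which are compact --- handled by the intermediate-exponent trick above --- and (ii) the correct translation of the Sobolev exponents and thresholds through the substitution $\sigma=2s$ and the dimension $n=\dim U$; the admissibility of the corner domain $\Omega$ for the Sobolev theory has already been settled in the proof of Lemma \ref{H2E:characterisation} and needs no further comment.
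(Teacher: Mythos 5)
Your argument is correct and uses exactly the ingredients the paper itself invokes: the identification $X^s(U)\subset W^{2s}_2(U)$ from Lemma \ref{H2E:characterisation}, the classical Sobolev imbeddings, and the compact imbedding \eqref{H2E:lem63} to upgrade the sub-critical cases. The paper's proof simply declares the result a straightforward consequence of these three facts, so your write-up is the same approach with the bookkeeping (exponent translation via $\sigma=2s$ and the intermediate-exponent trick) made explicit.
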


\begin{proof}
Imbeddings \eqref{H2E:imbedI} and \eqref{H2E:imbedO} are straigthforward consequences of the well-known continuous imbeddings of fractional Sobolev spaces $W^s_p$ (see for instance [\cite{Ada}, Theorem 7.27]), characterisation of $X^s$ spaces given in \ref{H2E:characterisation} and compact imbeddings of $X^s$ spaces given in \eqref{H2E:lem63}.
\end{proof}

For $s\geq -1$ define operator $A_{0,s}$ (resp. $A_{h,s}$) as $X^s(I)$ (resp. $X^s(\Omega)$) realisation of operator $A_0$ (resp. $A_h$) i.e.
\begin{align*}
A_{0,s}&:X^{s}(I)\supset X^{s+1}(I)\to X^{s}(I), \  A_{0,s}u=\sum_{i\in\mathbb{N}}\lambda_i^I(u|u_i)_{X(I)}u_i, \ {\rm for} \ u\in X^{s+1}(I),\\
A_{h,s}&:X^{s}(\Omega)\supset X^{s+1}(\Omega)\to X^{s}(\Omega), \  A_{h,s}w=\sum_{i,j\in\mathbb{N}}\lambda_{ij,h}^{\Omega}(w|w_{ij})_{X(\Omega)}w_{ij}, \ {\rm for} \ w\in X^{s+1}(\Omega).
\end{align*}

Operators $A_{0,s}, A_{h,s}$ are self-adjoint and nonpositive and thus generate strongly continuous, analytic semigroups of contractions $e^{tA_{0,s}}\in\mathcal{L}(X^{s}(I)),\  e^{tA_{h,s}}\in\mathcal{L}(X^s(\Omega))$.

If $s_1\geq s_2\geq-1$ then operators $A_{0,s_1},R(\lambda,A_{0,s_1}),e^{tA_{0,s_1}}$ are restrictions of operators $A_{0,s_2},R(\lambda,A_{0,s_2}),e^{tA_{0,s_2}}$ and operators $A_{h,s_1},R(\lambda,A_{h,s_1}),e^{tA_{h,s_1}}$ are restrictions of operators $A_{h,s_2},R(\lambda,A_{h,s_2}),e^{tA_{h,s_2}}$  i.e.
\begin{align*}
A_{0,s_1}u&=A_{0,s_2}u, \ {\rm for} \ u\in X^{s_1+1}(I),\\
R(\lambda,A_{0,s_1})u&=R(\lambda,A_{0,s_2})u,  \ 
e^{tA_{0,s_1}}u=e^{tA_{0,s_2}}u, \ {\rm for} \ u\in X^{s_1}(I),\\
A_{h,s_1}w&=A_{h,s_2}w, \ {\rm for} \ w\in X^{s_1+1}(\Omega),\\
R(\lambda,A_{h,s_1})w&=R(\lambda,A_{h,s_2})w,  \ 
e^{tA_{h,s_1}}w=e^{tA_{h,s_2}}w, \ {\rm for} \ w\in X^{s_1}(\Omega).
\end{align*}
From now on we will loose $s$-dependence in notation and write $A_0,A_h, R(\lambda,A_0), R(\lambda, A_h), e^{tA_0}, e^{tA_h}$ instead of $A_{0,s},A_{h,s},R(\lambda,A_{0,s}), R(\lambda, A_{h,s}), e^{tA_{0,s}}, e^{tA_{h,s}}$. \\

In the next lemma we collect basic estimates for the resolvents $R(\lambda,A_0), R(\lambda,A_h)$ and semigroups $e^{tA_0}, e^{tA_h}$.\\

\begin{lem}\label{H2E:sembasicest}
For $h\in(0,1], \ \lambda>0, \ t>0$ the following estimates hold
\begin{align}
\n{R(\lambda,A_{0})}_{\mathcal{L}(X^{s}(I),X^{s'}(I))}+\n{R(\lambda,A_h)}_{\mathcal{L}(X^{s}(\Omega),X^{s'}(\Omega))}\leq C\frac{1}{\lambda}(1+\lambda^{s'-s}), \ -1\leq s\leq s'\leq s+1\label{H2E:resest},\\
\n{e^{tA_{0}}}_{\mathcal{L}(X^{s}(I),X^{s'}(I))}+\n{e^{tA_h}}_{\mathcal{L}(X^{s}(\Omega),X^{s'}(\Omega))}\leq C\Big(1+\frac{1}{t^{s'-s}}\Big), \ -1\leq s\leq s'\label{H2E:semest},
\end{align}
where $C$ depends only on $s,s'$.
\end{lem}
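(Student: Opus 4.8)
The plan is to exploit the spectral representation of the semigroups, so that everything reduces to a supremum of an elementary scalar function over the eigenvalues. Since $A_0$ and $A_h$ are self-adjoint with explicit eigenfunctions $u_i$ (resp. $w_{ij}$) and nonpositive eigenvalues $\lambda_i^I\le 0$ (resp. $\lambda_{ij,h}^{\Omega}\le 0$), the operators $R(\lambda,A)$ and $e^{tA}$ act diagonally in the orthonormal systems, and the $X^s$-norms are the weighted $\ell_2$-norms with weights $(1-\lambda_i^I)^{2s}$ (resp. $(1-\lambda_{ij}^{\Omega})^{2s}$) by Lemma \ref{H2E:lem6}. Hence, writing $\mu\ge 1$ for a generic value of $1-\lambda$ (so $\mu$ ranges over $\{1-\lambda_i^I\}$ or $\{1-\lambda_{ij,h}^{\Omega}\}$), it suffices to bound the scalar multipliers: for the resolvent one needs $\sup_{\mu\ge1}(\mu-1)^{s'-s}\,|\lambda-(1-\mu)|^{-1}=\sup_{\mu\ge1}(\mu-1)^{s'-s}(\lambda-1+\mu)^{-1}$, and for the semigroup $\sup_{\mu\ge1}(\mu-1)^{s'-s}e^{-t(\mu-1)}$.

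The two estimates are then routine calculus. For \eqref{H2E:resest}: since $s'-s\in[0,1]$ and $\lambda>0$, one has $(\mu-1)^{s'-s}\le \mu^{s'-s}$, and $\mu\le \lambda-1+\mu$ is false in general, so instead split: if $\mu\le\lambda$ then $(\mu-1)^{s'-s}(\lambda-1+\mu)^{-1}\le \lambda^{s'-s}/\lambda=\lambda^{s'-s-1}$; if $\mu>\lambda$ then $(\mu-1)^{s'-s}(\lambda-1+\mu)^{-1}\le (\mu-1)^{s'-s}/(\mu-1)=(\mu-1)^{s'-s-1}\le 1$ since $s'-s-1\le 0$ and $\mu-1\ge 0$ — with a tiny care at $\mu=1$ when $s'=s+1$, handled by the convention $0^0$ or by noting $\lambda-1+\mu\ge\lambda$. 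Combining, the multiplier is bounded by $C(\lambda^{-1}+\lambda^{s'-s-1})=C\lambda^{-1}(1+\lambda^{s'-s})$, which is exactly the claim. For \eqref{H2E:semest}: apply inequality \eqref{H2E:ineq1} of Lemma \ref{H2E:ineq} with $\alpha=s'-s\ge0$, $r=t>0$, $t_0=0$ (or directly: $\sup_{x\ge0}x^{\alpha}e^{-tx}=(\alpha/(et))^{\alpha}$ for $\alpha>0$, attained at $x=\alpha/t$), giving $\sup_{\mu\ge1}(\mu-1)^{s'-s}e^{-t(\mu-1)}\le C t^{-(s'-s)}$; together with the trivial bound $\le 1$ (taking $\mu=1$, and monotonicity) one gets $\le C(1+t^{-(s'-s)})$, as claimed. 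Note the $h$-uniformity is automatic: the bounds depend only on the range of the multiplier over $[0,\infty)\ni\mu-1$, and the eigenvalues $1-\lambda_{ij,h}^{\Omega}\ge 1$ lie in this range for every $h\in(0,1]$; no constant sees $h$.

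Concretely, I would carry it out as follows. First, fix $-1\le s\le s'$ and $w=\sum_{i,j}a_{ij}w_{ij}\in X^s(\Omega)$ with $a_{ij}=\langle w,w_{ij}\rangle$, so that $\n{w}_{X^s(\Omega)}^2=\sum_{i,j}(1-\lambda_{ij,h}^{\Omega})^{2s}a_{ij}^2$. Then $e^{tA_h}w=\sum_{i,j}e^{t\lambda_{ij,h}^{\Omega}}a_{ij}w_{ij}$, hence
\begin{align*}
\n{e^{tA_h}w}_{X^{s'}(\Omega)}^2=\sum_{i,j}(1-\lambda_{ij,h}^{\Omega})^{2s'}e^{2t\lambda_{ij,h}^{\Omega}}a_{ij}^2\le\Big(\sup_{i,j}(1-\lambda_{ij,h}^{\Omega})^{2(s'-s)}e^{2t\lambda_{ij,h}^{\Omega}}\Big)\n{w}_{X^s(\Omega)}^2,
\end{align*}
and the supremum is estimated by the scalar bound above. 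The resolvent case is identical with $e^{t\lambda_{ij,h}^{\Omega}}$ replaced by $(\lambda-\lambda_{ij,h}^{\Omega})^{-1}$. The case $U=I$ with $A_0$ is word-for-word the same using $u_i$, $\lambda_i^I$. Summing the two contributions gives the stated estimates.

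There is no serious obstacle here; the only mild subtlety is the endpoint $s'=s+1$ in \eqref{H2E:resest}, where the multiplier $(\mu-1)^{s'-s-1}=(\mu-1)^0$ must be read as $1$ (including at $\mu=1$), and checking that $\mathcal{L}(X^s,X^{s'})$ really means the restriction of the common operator to $X^{s'}$-valued outputs in the sense already fixed in the text ("we will loose $s$-dependence in notation"), so that the spectral formula applies verbatim in every $X^s$. Everything else is a one-line application of Lemma \ref{H2E:ineq} and elementary monotonicity.
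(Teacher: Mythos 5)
Your proof is correct and takes essentially the same route as the paper, which gives no details beyond pointing to the spectral-decomposition technique of Lemma \ref{H2E:lemiron}: diagonalise in the eigenbasis, reduce the operator norm to the supremum of the scalar multiplier over the spectrum, and bound that supremum by elementary calculus (inequality \eqref{H2E:ineq1} for the semigroup). The only cosmetic slip is that the $X^{s}(\Omega)$-weights are $(1-\lambda_{ij}^{\Omega})^{2s}$ (i.e.\ with $h=1$) rather than $(1-\lambda_{ij,h}^{\Omega})^{2s}$, but since $s'\geq s$ and $1-\lambda_{ij}^{\Omega}\leq 1-\lambda_{ij,h}^{\Omega}$ the multiplier you estimate dominates the correct one, so the argument goes through unchanged and remains uniform in $h$.
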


\begin{proof}
The proof may be obtained with the use of spectral decomposition. For details we refer to proof of the Lemma \ref{H2E:lemiron} where we use the same technique.
\end{proof}

\subsubsection{Operators $E$, $P$ and $Tr$}
Define operators
\begin{align}
&E\in\mathcal{L}(X(I),X(\Omega)), \ [Eu](x_1,x_2)=u(x_1), \  {\rm for} \ u\in X(I),\label{H2E:defE}\\
&P\in\mathcal{L}(X(\Omega),X(I)), \ [Pw]
(x_1)=\int_{I_+}w(x_1,x_2)dx_2, \ {\rm for} \ w\in X(\Omega).\label{H2E:defP}
\end{align}
Basic properties of operators $E$ and $P$ are collected in the following\\

\begin{lem}\label{H2E:lemadj}
Operators $E$ and $P$ are mutually adjoint i.e. $E^*=P$. Moreover
\begin{align}
E\in\mathcal{L}(X^s(I),X^s(\Omega)), \ P\in\mathcal{L}(X^s(\Omega),X^s(I)), \ {\rm for} \ s\geq 0.\label{H2E:EPreg}
\end{align}

\end{lem}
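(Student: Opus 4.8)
The plan is to verify the adjoint relation $E^* = P$ directly from the definitions and then bootstrap the regularity statement \eqref{H2E:EPreg} from the $s=0$ case using the spectral description of the $X^s$ spaces. First I would prove $E^* = P$: both $E$ and $P$ are bounded by inspection (Cauchy--Schwarz gives $\n{Pw}_{X(I)}\le\n{w}_{X(\Omega)}$ since $|I_+|=1$, and $\n{Eu}_{X(\Omega)}=\n{u}_{X(I)}$), so it suffices to check $(Eu\,|\,w)_{X(\Omega)} = (u\,|\,Pw)_{X(I)}$ for all $u\in X(I)$, $w\in X(\Omega)$. This is immediate from Fubini: $\int_\Omega u(x_1)w(x_1,x_2)\,d(x_1,x_2) = \int_I u(x_1)\big(\int_{I_+}w(x_1,x_2)\,dx_2\big)dx_1$.

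Next I would establish \eqref{H2E:EPreg}. The key computational fact is the action of $E$ on the orthonormal basis: by \eqref{H2E:wij} and the fact that $v_0 \equiv c_{20} = 1$ on $I_+$, we have $E u_i = u_i \otimes v_0 = w_{i0}$ for every $i\in\mathbb{N}$. Hence for $u = \sum_i (u|u_i)_{X(I)} u_i$ we get $Eu = \sum_i (u|u_i)_{X(I)} w_{i0}$, i.e. $E$ maps the $i$-th coordinate of $u$ to the $(i,0)$ coordinate of $Eu$ and kills nothing. Since $\lambda^{\Omega}_{i0} = \lambda^I_i + h^{-2}\lambda^{I_+}_0 = \lambda^I_i$ (because $\lambda^{I_+}_0 = 0$), the weights match exactly:
\begin{align*}
\n{Eu}_{X^s(\Omega)}^2 = \sum_{i\in\mathbb{N}}(1-\lambda^{\Omega}_{i0})^{2s}(u|u_i)_{X(I)}^2 = \sum_{i\in\mathbb{N}}(1-\lambda^I_i)^{2s}(u|u_i)_{X(I)}^2 = \n{u}_{X^s(I)}^2,
\end{align*}
so in fact $E$ is an isometry $X^s(I)\to X^s(\Omega)$ for every $s\ge 0$, which is stronger than the claimed boundedness. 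For $P$, I would compute $P w_{ij}$: since $\int_{I_+} v_j = 0$ for $j\ge 1$ and $\int_{I_+} v_0 = 1$, we get $P w_{ij} = \delta_{j0}\, u_i$. Thus $P$ projects onto the $j=0$ modes and then relabels, and the same weight identity $\lambda^{\Omega}_{i0}=\lambda^I_i$ gives $\n{Pw}_{X^s(I)}^2 = \sum_i (1-\lambda^I_i)^{2s}(w|w_{i0})_{X(\Omega)}^2 \le \n{w}_{X^s(\Omega)}^2$, again with a norm-one bound. Alternatively, once $E\in\mathcal{L}(X^s(I),X^s(\Omega))$ is known, one can deduce $P = E^*\in\mathcal{L}(X^s(\Omega),X^s(I))$ by duality, using that $X^{-s}$ is defined as the dual of $X^s$ and that the pairing is compatible across the scale (Lemma \ref{H2E:lem6}); but the direct spectral computation is cleaner and self-contained.

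There is no serious obstacle here; the only point requiring a little care is making sure the $h$-dependence in $\lambda^{\Omega}_{ij,h}$ does not enter — and it does not, precisely because $E$ and $P$ only ever touch the $j=0$ sector where the $h^{-2}\lambda^{I_+}_j$ term vanishes. I would state the computation for general $h\in(0,1]$ to make this transparent, even though \eqref{H2E:EPreg} as written does not mention $h$. One should also note that $E$ and $P$ do \emph{not} extend to bounded operators on $X^s$ for $s<0$ in the naive pointwise sense, which is why the lemma restricts to $s\ge 0$; I would not belabor this but it explains the hypothesis.
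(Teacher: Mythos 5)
Your proof is correct and follows essentially the same route as the paper: the identities $Eu_i=w_{i0}$, $Pw_{ij}=\delta_{0j}u_i$ and the observation $\lambda^{\Omega}_{i0}=\lambda^I_i$ drive the same spectral norm computation showing $E$ is an isometry and $P$ a contraction on each $X^s$. The only (immaterial) difference is that you verify $E^*=P$ by Fubini rather than by checking the pairing on basis elements and extending by density.
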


\begin{proof}
To prove that $E^*=P$ we need to show that
\begin{align}
(Eu|w)_{X(\Omega)}=(u|Pw)_{X(I)}, \ {\rm for} \ u\in X(I), w\in X(\Omega)\label{H2E:condEP}.
\end{align}
Observe that for $i,j,k\in\mathbb{N}$ 
\begin{align}
Eu_k=w_{k0} \ {\rm and} \ Pw_{ij}=u_i\delta_{0j}.
\end{align}
Thus
\begin{align*}
(Eu_k|w_{ij})_{X(\Omega)}=(w_{k0}|w_{ij})_{X(\Omega)}=\delta_{ki}\delta_{0j}=(u_k|u_i\delta_{0j})_{X(I)}=(u_k|Pw_{ij})_{X(I)}.
\end{align*}
Owing to bilinearity of scalar products we obtain \eqref{H2E:condEP} for $u\in X_{fin}(I), w\in X_{fin}(\Omega)$ and finally by density of $X_{fin}(I)$ (resp. $X_{fin}(\Omega)$) in $X(I)$ (resp. $X(\Omega)$) and continuity of scalar products and operators $E, P$ we obtain \eqref{H2E:condEP} for arbitrary $u\in X(I), w\in X(\Omega)$.

For $u\in X^s(I)$ we obtain
\begin{align*}
\n{Eu}_{X^s(\Omega)}^2&=\sum_{i,j}(1-\lambda_{ij}^{\Omega})^{2s}(Eu|w_{ij})^2_{X(\Omega)}=\sum_{i,j}(1-\lambda_{ij}^{\Omega})^{2s}(u|Pw_{ij})^2_{X(\Omega)}=\sum_{i,j}(1-\lambda_{ij}^{\Omega})^{2s}(u|u_i)^2_{X(\Omega)}\delta_{0j}\\
&=\sum_i(1-\lambda_{i0}^{\Omega})^{2s}(u|u_i)^2_{X(I)}=\n{u}_{X^s(I)}^2,
\end{align*}
since $\lambda^{\Omega}_{i0}=\lambda^I_i$.
Similarly
\begin{align*}
\n{Pw}_{X^s(I)}&=\sum_k(1-\lambda_k^I)^{2s}(Pw|u_k)^2_{X(I)}
=\sum_k(1-\lambda_k^I)^{2s}(w|Eu_k)^2_{X(I)}
\\
&=\sum_k(1-\lambda_{k0}^{\Omega})^{2s}(w|w_{k0})^2_{X(I)}\leq\n{w}_{X^s(\Omega)}^2.
\end{align*}
\end{proof}

Define operator $P_{-1}=E'$ and operator $E_{-1}=P'$. Using Lemma \ref{H2E:lemadj} we obtain that $P_{-1}$ and $E_{-1}$ satisfy
\begin{align*}   
P_{-1}\in\mathcal{L}(X^{-s}(\Omega),X^{-s}(I)), \ E_{-1}\in\mathcal{L}(X^{-s}(I),X^{-s}(\Omega)), \ s\in[0,1].
\end{align*}
Moreover for $u\in X(I), w\in X(\Omega)$
\begin{align*}
P_{-1}w=E'w=E^*w=Pw, \quad E_{-1}u=P'u=P^*u=Eu.
\end{align*}
From now on we will write $E,P$ instead of $E_{-1},P_{-1}$. 

For $w\in X_{fin}(\Omega)$ denote by $Tr(w)$ the trace operator i.e. restriction of $w$ to $I\times\{0\}$:
\begin{align*}
Tr(w)(x_1)=w(x_1,0), \ {\rm for} \ x_1\in I.
\end{align*}

\begin{lem}\label{H2E:tracelem}
For any $s>1/4$ there exists $C$ depending only on $s$ such that  for any $w\in X_{fin}(\Omega)$
\begin{align}
\n{Tr(w)}_{X^{s-1/4}(I)}\leq C\n{w}_{X^s(\Omega)} \label{H2E:traceest}.
\end{align}
Operator $Tr$ can be uniquely extended to an operator $\tilde{Tr}\in \mathcal{L}(X^{s}(\Omega),X^{s-1/4}(I))$ . 
\end{lem}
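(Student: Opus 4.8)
The plan is to prove the trace estimate \eqref{H2E:traceest} by expanding $w\in X_{fin}(\Omega)$ in the orthonormal basis $\{w_{ij}\}$ and computing the $X^{s-1/4}(I)$-norm of $Tr(w)$ explicitly in terms of the coefficients. Write $w=\sum_{i,j}a_{ij}w_{ij}$ with $a_{ij}=(w|w_{ij})_{X(\Omega)}$ (a finite sum). Since $w_{ij}(x_1,x_2)=u_i(x_1)v_j(x_2)$ and $v_j(0)=c_{2j}$, we get $Tr(w)=\sum_i\big(\sum_j a_{ij}c_{2j}\big)u_i$, so denoting $b_i=\sum_j a_{ij}c_{2j}$ we have $(Tr(w)|u_i)_{X(I)}=b_i$ and hence
\begin{align*}
\n{Tr(w)}_{X^{s-1/4}(I)}^2=\sum_i(1-\lambda_i^I)^{2s-1/2}b_i^2=\sum_i(1-\lambda_i^I)^{2s-1/2}\Big(\sum_j a_{ij}c_{2j}\Big)^2.
\end{align*}
On the other side $\n{w}_{X^s(\Omega)}^2=\sum_{i,j}(1-\lambda_{ij}^{\Omega})^{2s}a_{ij}^2$, and recall $\lambda_{ij}^{\Omega}=\lambda_i^I+\lambda_j^{I_+}=-(i\pi/2)^2-(j\pi)^2$, while $c_{2j}\le\sqrt2$.

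The key step is to bound $\big(\sum_j a_{ij}c_{2j}\big)^2$ using Cauchy--Schwarz against a summable weight. For fixed $i$ write
\begin{align*}
\Big(\sum_j a_{ij}c_{2j}\Big)^2=\Big(\sum_j a_{ij}(1-\lambda_{ij}^{\Omega})^{s}\cdot c_{2j}(1-\lambda_{ij}^{\Omega})^{-s}\Big)^2\le\Big(\sum_j a_{ij}^2(1-\lambda_{ij}^{\Omega})^{2s}\Big)\Big(\sum_j 2(1-\lambda_{ij}^{\Omega})^{-2s}\Big).
\end{align*}
So it suffices to show $\sum_j(1-\lambda_{ij}^{\Omega})^{-2s}\le C(1-\lambda_i^I)^{1/2-2s}$ uniformly in $i$, since then, after multiplying by $(1-\lambda_i^I)^{2s-1/2}$ and summing over $i$, the right-hand side collapses to $C\sum_{i,j}(1-\lambda_{ij}^{\Omega})^{2s}a_{ij}^2=C\n{w}_{X^s(\Omega)}^2$. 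This reduces the whole lemma to a one-parameter estimate on a numerical series.

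The main obstacle — the only real content — is the numerical bound $\sum_{j\ge0}\big(r+(j\pi)^2\big)^{-2s}\le C\,r^{1/2-2s}$ for $r=1-\lambda_i^I\ge1$ and $s>1/4$ (so that $2s>1/2$ and the exponent $1/2-2s$ is negative, and also $4s>1$ guarantees the series converges). I would prove this by comparing the sum with the integral $\int_0^\infty(r+(\pi x)^2)^{-2s}\,dx$; substituting $\pi x=\sqrt r\,y$ gives $\tfrac{\sqrt r}{\pi}\int_0^\infty(r+ry^2)^{-2s}dy=\tfrac{1}{\pi}r^{1/2-2s}\int_0^\infty(1+y^2)^{-2s}dy$, and the last integral is a finite constant precisely because $4s>1$. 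The $j=0$ term $r^{-2s}\le r^{1/2-2s}$ (as $r\ge1$) is absorbed, and monotonicity of $j\mapsto(r+(j\pi)^2)^{-2s}$ justifies the integral comparison $\sum_{j\ge1}\le\int_0^\infty$. Once \eqref{H2E:traceest} is established, the extension to $\tilde{Tr}\in\mathcal{L}(X^s(\Omega),X^{s-1/4}(I))$ is immediate: $X_{fin}(\Omega)$ is dense in $X^s(\Omega)$ by Lemma \ref{H2E:lem6}, so $Tr$ is a densely defined bounded operator between Banach spaces and extends uniquely by continuity.
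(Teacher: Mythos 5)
Your proof is correct and follows essentially the same route as the paper: expand in the basis $\{w_{ij}\}$, identify $Tr(w)=\sum_i(\sum_j a_{ij}c_{2j})u_i$, and apply Cauchy--Schwarz in $j$ against a weight comparable to $\n{w_{ij}}_{X^s(\Omega)}^2$, reducing everything to the tail bound $\sum_j(r+(j\pi)^2)^{-2s}\leq Cr^{1/2-2s}$ for $s>1/4$. The only (cosmetic) difference is that you use the weight $(1-\lambda_{ij}^{\Omega})^{2s}$ directly with an integral comparison, whereas the paper routes through the equivalent weight $(1+i+j)^{4s}$; both yield the same estimate.
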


\begin{proof}
For $w=\sum_{i,j\geq0}a_{ij}w_{ij}$ where only finitely many $a_{ij}$ are nonzero we have
$Tr(w)=\sum_{i,j\geq0}a_{ij}u_iv_j(0)=\sum_{i\geq0}(\sum_{j\geq0}a_{ij}c_{2j})u_i$. Using Lemma \ref{H2E:lem6} we get that sytem $\{u_i\}$ is orthogonal in $X^{s-1/4}(I)$ and $\n{u_i}^2_{X^{s-1/4}(I)}=(1-\lambda_i^I)^{2s-1/2}=(1+(i\pi/2)^2)^{2s-1/2}$. Since $0<c_{2j}\leq\sqrt{2}$ (see \eqref{H2E:cij}) we thus obtain 
\begin{align*}
\n{Tr(w)}^2_{X^{s-1/4}(I)}&=\sum_{i\geq0}(\sum_{j\geq0}a_{ij}c_{2j})^2\n{u_i}^2_{X^{s-1/4}(I)}\leq 2\sum_{i\geq0}(\sum_{j\geq0}|a_{ij}|)^2(1+(i\pi/2)^2)^{2s-1/2}\\
&\leq 2\Big(\frac{\pi}{2}\Big)^{4s-1}\sum_{i\geq0}(\sum_{j\geq0}|a_{ij}|)^2(1+i)^{4s-1}.
\end{align*}
Using Cauchy-Schwarz inequality to estimate the inner sum we further obtain that
\begin{align}
\n{Tr(w)}^2_{X^{s-1/4}(I)}&\leq 2\Big(\frac{\pi}{2}\Big)^{4s-1}\sum_{i\geq0}(\sum_{j\geq0}|a_{ij}|^2(1+i+j)^{4s})(\sum_{j\geq0}\frac{1}{(1+i+j)^{4s}})(1+i)^{4s-1}\nonumber \\
&\leq  2\Big(\frac{\pi}{2}\Big)^{4s-1}\frac{4s}{4s-1}\sum_{i\geq0}(\sum_{j\geq0}|a_{ij}|^2(1+i+j)^{4s})\label{H2E:tracelem1},
\end{align}
where the last inequality is a consequence of the following estimate
\begin{align*}
\sum_{j\geq0}\frac{(1+i)^{4s-1}}{(1+i+j)^{4s}}&=\frac{1}{1+i}+\sum_{j\geq1}\frac{(1+i)^{4s-1}}{(1+i+j)^{4s}}\leq 1+(1+i)^{4s-1}\int_{1+i}^{\infty}\frac{dt}{t^{4s}}\\
&=1+(1+i)^{4s-1}\frac{1}{4s-1}(1+i)^{1-4s}=\frac{4s}{4s-1}.
\end{align*}

On the other hand since system $\{w_{ij}\}$ is orthogonal in $X^s(\Omega)$ and 
\begin{align*}
\n{w_{ij}}^2_{X^s(\Omega)}=(1-\lambda_{ij}^{\Omega})^{2s}=(1+(i\pi/2)^2+(j\pi)^2)^{2s}\geq 3^{-2s}(1+i+j)^{4s}
\end{align*}
 we have
\begin{align}
&\n{w}_{X^s(\Omega)}^2=\sum_{i,j\geq0}|a_{ij}|^2\n{w_{ij}}^2_{X^s(\Omega)}=\sum_{i\geq0}(\sum_{j\geq0}|a_{ij}|^2(1+(i\pi/2)^2+(j\pi)^2)^{2s})\nonumber\\
&\geq 3^{-2s}\sum_{i\geq0}(\sum_{j\geq0}|a_{ij}|^2(1+i+j)^{4s})\label{H2E:tracelem2}.
\end{align}
Combining \eqref{H2E:tracelem1} and \eqref{H2E:tracelem2} we obtain \eqref{H2E:traceest} with $C^2=3^{2s}(\pi/2)^{4s-1}8s/(4s-1)$. Since $X_{fin}(\Omega)$ is dense in $X^{s}(\Omega)$ (see part 2 of Lemma \ref{H2E:lem6})  the latter part of the Lemma \ref{H2E:tracelem} follows.
\end{proof}

From now on we write $Tr$ instead of $\tilde{Tr}$.\\
Next we collect several identities involving operators $P,E,Tr,R(\lambda,A_h), R(\lambda,A_0), e^{tA_h}$ and $e^{tA_0}$.\\
\begin{lem}
The following identities hold
\bs\label{H2E:Identities}
\eq{
PTr'u=TrEu=PEu&=u, \ {\rm for} \ u\in X^{-s}(I), \ 1\geq s>0 \label{H2E:Iden0}\\
R(\lambda,A_h)E&=ER(\lambda,A_{0}), \ {\rm for} \ h>0 \label{H2E:Iden1} \\ 
e^{tA_h}E&=Ee^{tA_{0}}, \ {\rm for} \  h>0 \label{H2E:Iden2}.
}
\es
\end{lem}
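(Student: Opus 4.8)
The plan is to verify each of the three identities \eqref{H2E:Iden0}, \eqref{H2E:Iden1}, \eqref{H2E:Iden2} by reducing everything to the explicit action on the orthonormal bases $\{u_i\}$ of $X(I)$ and $\{w_{ij}\}$ of $X(\Omega)$, since all the operators involved are given by spectral (i.e.\ Fourier-type) formulas in these bases, and then extending by density and continuity to the full negative-index scale. The key structural observation is the one already recorded in the proof of Lemma \ref{H2E:lemadj}: $Eu_k = w_{k0}$ and $Pw_{ij} = u_i\delta_{0j}$; together with $\lambda^{\Omega}_{i0}=\lambda^I_i$ this is essentially everything needed.

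First I would prove \eqref{H2E:Iden1} and \eqref{H2E:Iden2} together, since they have the same flavour. For $u\in X(I)$, writing $u = \sum_i (u|u_i)_{X(I)} u_i$ and using the resolvent formula from Lemma \ref{H2E:selfadjoint},
\begin{align*}
R(\lambda,A_h)Eu &= R(\lambda,A_h)\sum_i (u|u_i)_{X(I)} w_{i0} = \sum_i (u|u_i)_{X(I)}(\lambda-\lambda^{\Omega}_{i0})^{-1} w_{i0}\\
&= \sum_i (u|u_i)_{X(I)}(\lambda-\lambda^I_i)^{-1} Eu_i = E R(\lambda,A_0)u,
\end{align*}
using $\lambda^{\Omega}_{i0}=\lambda^I_i$ and boundedness/continuity of $E$ to pull it through the sum; the same computation with $e^{t\lambda^{\Omega}_{i0}}$ in place of $(\lambda-\lambda^{\Omega}_{i0})^{-1}$ gives \eqref{H2E:Iden2}. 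One first does this on $X_{fin}(I)$ where the sums are finite, then extends to $u\in X^{-s}(I)$ for $s\in[0,1]$ by density of $X_{fin}(I)$ and the uniform (in the relevant operator norms) bounds from Lemma \ref{H2E:sembasicest}, noting that all operators in sight are the restrictions/extensions identified in the paragraph preceding Lemma \ref{H2E:sembasicest}, so the identities on $X$ automatically lift to the whole scale.

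For \eqref{H2E:Iden0} I would argue as follows. The equality $TrEu = u$: for $u = u_i$ one has $Eu_i = w_{i0}$ and $Tr(w_{i0})(x_1) = u_i(x_1)v_0(0) = u_i(x_1)$ since $v_0\equiv 1$ (indeed $c_{20}=1$), so $TrEu_i = u_i$; extend by linearity and the continuity of $Tr$ (Lemma \ref{H2E:tracelem}) and $E$, first on finite combinations then to $X^{-s}(I)$. The equality $PEu = u$ is immediate from $PEu_i = Pw_{i0} = u_i\delta_{00} = u_i$. Finally $PTr'u = u$: since $E = Tr'$ on the appropriate space (the trace is the pre-adjoint of extension in the relevant duality — or one checks directly that $Tr'$ sends $u_i$ to $w_{i0}$ by testing $(Tr'u_i|w_{jk})_{X(\Omega)} = (u_i|Tr\,w_{jk})_{X(I)} = (u_i|u_j)\,v_k(0) = \delta_{ij}$, and $v_k(0)=c_{2k}$, so the nonzero contributions land on the $j=i$ column; a small computation normalising by $c_{2k}$ shows $Tr' u_i = w_{i0}$ after accounting for the scalar products on $X^{s}$ versus $X^{s-1/4}$), whence $PTr' = PE$ and the claim follows.

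The main obstacle is the careful bookkeeping of the duality pairings and the $s$-dependent scalar products when identifying $Tr'$ with $E$ (equivalently $Tr = P'$ up to the $1/4$ shift in the Sobolev exponent): one must be precise about which pair of spaces $Tr'$ acts between and verify that the normalising constants $c_{2j}$ enter consistently, because $Tr$ maps $X^s(\Omega)$ into $X^{s-1/4}(I)$ rather than into the "same" space, so the transpose lives in a correspondingly shifted scale. Once that identification is pinned down, everything else is a routine termwise computation on the bases followed by the standard density-and-continuity extension that has already been used repeatedly in the preceding lemmas.
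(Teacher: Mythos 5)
The treatment of \eqref{H2E:Iden1}, \eqref{H2E:Iden2} and of the identities $TrEu=PEu=u$ is correct and is essentially the paper's own argument (termwise verification on the bases using $Eu_i=w_{i0}$, $Pw_{ij}=u_i\delta_{0j}$, $\lambda^{\Omega}_{i0}=\lambda^I_i$, followed by density). The problem is your proof of $PTr'u=u$: the identification $Tr'=E$ on which you rest it is false, and your own computation already shows this. You correctly find
$\Big<Tr'u_i,w_{jk}\Big>=\Big<u_i,Tr\,w_{jk}\Big>=v_k(0)\,\delta_{ij}=c_{2k}\,\delta_{ij}$,
which is nonzero for \emph{every} $k$, whereas $\Big<w_{i0},w_{jk}\Big>=\delta_{ij}\delta_{0k}$ vanishes for $k\geq1$. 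No ``normalising by $c_{2k}$'' reconciles these: the correct expansion is $Tr'u_i=\sum_{k\geq0}c_{2k}w_{ik}$ (in $X^{-s}(\Omega)$ for $s$ large enough), which is the singular measure $u_i(x_1)\,dx_1\otimes\delta_{\{x_2=0\}}$ concentrated on the bottom edge, not the function $Eu_i$ constant in $x_2$. So $Tr'\neq E$ and the step ``whence $PTr'=PE$'' is not justified as written.

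The conclusion $PTr'=I$ is nevertheless true, and the repair is short. Either apply $P$ directly to the correct expansion: $PTr'u_i=P\sum_{k\geq0}c_{2k}w_{ik}=\sum_{k\geq0}c_{2k}u_i\delta_{0k}=c_{20}u_i=u_i$ (using $c_{20}=1$), then extend by density; or, more cleanly, use the duality the paper exploits: since $P=E'$ by construction, $PTr'=E'Tr'=(TrE)'$, and $TrE=I$ (which you have already proved) forces $(TrE)'=I$. Everything else in your proposal stands.
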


\begin{proof}
Identities $TrEu=PEu=u$ are obvious for $u\in X(I)$ and can be extended to the case when $u\in X^{-s}(I)$ by a density argument. Then
\begin{align*}
PTr'u=E'Tr'u=(TrE)'u=u,
\end{align*}
from which \eqref{H2E:Iden0} follows.\\
Since $Eu_i=w_{i0}$ for any $i\geq0$ hence
\begin{align*} 
R(\lambda,A_h)Eu_i=R(\lambda,A_h)w_{i0}=\frac{1}{\lambda-\lambda_{i0}^{\Omega}}w_{i0}=E\frac{1}{\lambda-\lambda_i^{I}}u_i=ER(\lambda,A_{0})u_i.
\end{align*}
Since $\{u_i\}_{i\geq0}$ is a Schauder basis in every $X^s(I)$ (see part 2 of Lemma \ref{H2E:lem6}) we obtain \eqref{H2E:Iden1}. Similarly one proves \eqref{H2E:Iden2}. 
\end{proof}

\subsubsection{Resolvent and semigroup estimates for dimension reduction}
Estimates for semigroup $e^{tA_h}$ and resolvent operator $R(\lambda,A_h)$ which are presented in the next lemma are of fundamental importance in the dimension reduction carried out in section \ref{H2E:secdimred}.\\
\begin{lem}\label{H2E:lemiron}
For $h\in(0,1], \ s,s'\geq-1, \ t,\lambda>0,\ w\in X^s(\Omega)$ the following estimates hold
\begin{align}
&\n{R(\lambda,A_h)(I-EP)w}_{X^{s'}(\Omega)}\leq \frac{1}{\lambda-\lambda_{01,h}^{\Omega}}(1+(\lambda-\lambda_{01,h}^{\Omega})^{s'-s})\n{(I-EP)w}_{X^{s}(\Omega)}, \ 0\leq s'-s\leq 1,\label{H2E:resestiron}\\
&\n{e^{tA_h}(I-EP)w}_{X^{s'}(\Omega)}\leq C\Big(1+\frac{1}{t^{s'-s}}\Big)e^{t\lambda_{01,h}^{\Omega}}\n{(I-EP)w}_{X^{s}(\Omega)}, \ 0\leq s'-s.\label{H2E:semestiron}
\end{align}
where $C$ depends only on $s,s'$.
\end{lem}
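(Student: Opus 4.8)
The plan is to exploit the spectral decomposition of $A_h$ together with the key observation that the operator $I-EP$ projects onto the subspace spanned by those $w_{ij}$ with $j\geq 1$. Indeed, since $Eu_i = w_{i0}$ and $Pw_{ij}=u_i\delta_{0j}$, we have $EPw_{ij}=w_{ij}\delta_{0j}$, so $(I-EP)w_{ij}=w_{ij}$ when $j\geq 1$ and $(I-EP)w_{i0}=0$. Hence for any $w\in X^s(\Omega)$, writing $w=\sum_{i,j}a_{ij}w_{ij}$ (with $a_{ij}$ the appropriate $X$-coefficients), we get $(I-EP)w=\sum_{i\geq0,j\geq1}a_{ij}w_{ij}$, and this is precisely the part of the spectrum where the eigenvalues $\lambda^{\Omega}_{ij,h}=\lambda^I_i+h^{-2}\lambda^{I_+}_j$ satisfy $\lambda^{\Omega}_{ij,h}\leq \lambda^{\Omega}_{01,h}$, since $\lambda^I_i\leq 0$ is maximized at $i=0$ and $h^{-2}\lambda^{I_+}_j$ is maximized over $j\geq 1$ at $j=1$. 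This uniform gap is what produces the exponential factor $e^{t\lambda^{\Omega}_{01,h}}$.

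First I would establish the resolvent estimate \eqref{H2E:resestiron}. Applying $R(\lambda,A_h)$ termwise, $R(\lambda,A_h)(I-EP)w=\sum_{i\geq0,j\geq1}(\lambda-\lambda^{\Omega}_{ij,h})^{-1}a_{ij}w_{ij}$, so using the orthogonality of $\{w_{ij}\}$ in every $X^{s'}(\Omega)$ (Lemma \ref{H2E:lem6}) and the relation $\n{w_{ij}}^2_{X^{s'}(\Omega)}=(1-\lambda^{\Omega}_{ij})^{2(s'-s)}\n{w_{ij}}^2_{X^{s}(\Omega)}$, it suffices to bound, for each $(i,j)$ with $j\geq1$, the scalar multiplier
\[
(1-\lambda^{\Omega}_{ij})^{s'-s}\,\frac{1}{\lambda-\lambda^{\Omega}_{ij,h}}.
\]
Since $0\leq \lambda-\lambda^{\Omega}_{ij,h}\leq \lambda-\lambda^{\Omega}_{01,h}$ is false in general (the eigenvalues go to $-\infty$), the correct comparison is: $\lambda-\lambda^{\Omega}_{ij,h}\geq \lambda-\lambda^{\Omega}_{01,h}>0$, so $(\lambda-\lambda^{\Omega}_{ij,h})^{-1}\leq(\lambda-\lambda^{\Omega}_{01,h})^{-1}$, and separately $1-\lambda^{\Omega}_{ij}\leq 1-\lambda^{\Omega}_{ij,h}\leq \lambda-\lambda^{\Omega}_{ij,h}$ (the middle inequality needs $h\leq 1$, and the last needs... actually one compares $1-\lambda^{\Omega}_{ij,h}$ with $\lambda-\lambda^{\Omega}_{ij,h}$ up to a factor depending on whether $\lambda\geq1$); more cleanly one writes $(1-\lambda^{\Omega}_{ij})^{s'-s}(\lambda-\lambda^{\Omega}_{ij,h})^{-1}\leq (\lambda-\lambda^{\Omega}_{ij,h})^{s'-s}(\lambda-\lambda^{\Omega}_{ij,h})^{-1}\leq (\lambda-\lambda^{\Omega}_{01,h})^{s'-s}(\lambda-\lambda^{\Omega}_{01,h})^{-1}$ when $0\leq s'-s\leq1$, using monotonicity in the eigenvalue; the extra ``$1+$'' absorbs the cross terms when $\lambda$ is small. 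This gives \eqref{H2E:resestiron} after summing in $\ell_2$.

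Next, for the semigroup estimate \eqref{H2E:semestiron}, I would proceed identically: $e^{tA_h}(I-EP)w=\sum_{i\geq0,j\geq1}e^{t\lambda^{\Omega}_{ij,h}}a_{ij}w_{ij}$, and the relevant scalar multiplier is $(1-\lambda^{\Omega}_{ij})^{s'-s}e^{t\lambda^{\Omega}_{ij,h}}$. Split the exponent as $e^{t\lambda^{\Omega}_{ij,h}}=e^{t(\lambda^{\Omega}_{ij,h}-\lambda^{\Omega}_{01,h})}e^{t\lambda^{\Omega}_{01,h}}$; the first factor is of the form $e^{-rt}$ with $r=\lambda^{\Omega}_{01,h}-\lambda^{\Omega}_{ij,h}\geq0$, and then I would apply inequality \eqref{H2E:ineq1} of Lemma \ref{H2E:ineq} (with $\alpha=s'-s$, in the form $\sup_{t>0}\mu^{\alpha}e^{-rt}$ comparisons) — more precisely, $(1-\lambda^{\Omega}_{ij})^{s'-s}e^{t(\lambda^{\Omega}_{ij,h}-\lambda^{\Omega}_{01,h})}$ is bounded using that $(1-\lambda^{\Omega}_{ij})\leq C(1+r)$ (comparing the full eigenvalue with the spectral gap $r$, valid since $h\leq1$ makes $|\lambda^{\Omega}_{ij,h}|\geq c|\lambda^{\Omega}_{ij}|$, hence $r\geq c|\lambda^{\Omega}_{ij}|-|\lambda^{\Omega}_{01,h}|$ grows with $|\lambda^{\Omega}_{ij}|$), and then $(1+r)^{s'-s}e^{-rt}\leq C(1+t^{-(s'-s)})$ follows from \eqref{H2E:ineq1} applied to each of $1\cdot e^{-rt}\leq 1$ and $r^{s'-s}e^{-rt}\leq C t^{-(s'-s)}$. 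Summing in $\ell_2$ yields \eqref{H2E:semestiron}.

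The main obstacle, and the only genuinely delicate point, is verifying the two-sided comparison between the full eigenvalue $1-\lambda^{\Omega}_{ij}$ (which controls the $X^{s'}$ norm) and the spectral gap quantity $\lambda-\lambda^{\Omega}_{01,h}$ (resp. $r=\lambda^{\Omega}_{01,h}-\lambda^{\Omega}_{ij,h}$) \emph{uniformly in $h\in(0,1]$}. This requires carefully using $h\leq1$: the inequalities $\lambda^{\Omega}_{ij}\leq \lambda^{\Omega}_{ij,h}\leq h^{-2}\lambda^{\Omega}_{ij}$ already noted in the text give that $\lambda^{\Omega}_{ij,h}$ and $\lambda^{\Omega}_{ij}$ are comparable up to the factor $h^{-2}$ only from one side, so one must be careful that the gap $r$ does not degenerate — but since $\lambda^{\Omega}_{ij,h}=\lambda^I_i+h^{-2}\lambda^{I_+}_j$ and $j\geq1$ forces $h^{-2}\lambda^{I_+}_j\leq h^{-2}\lambda^{I_+}_1=\lambda^{\Omega}_{01,h}$, we actually have $r = (\lambda^{\Omega}_{01,h}-\lambda^{\Omega}_{ij,h}) = -\lambda^I_i + h^{-2}(\lambda^{I_+}_1-\lambda^{I_+}_j)\geq -\lambda^I_i + (\lambda^{I_+}_1-\lambda^{I_+}_j)\geq c(|\lambda^I_i|+|\lambda^{I_+}_j|)\geq c'|\lambda^{\Omega}_{ij}|$ for $j\geq1$, uniformly in $h$; this is the crucial estimate and everything else is routine.
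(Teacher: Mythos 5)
Your proposal follows essentially the same route as the paper: spectral decomposition of $A_h$, the observation that $I-EP$ annihilates the $j=0$ modes, termwise bounds on the scalar multipliers over $i\geq0,\ j\geq1$, and inequality \eqref{H2E:ineq1} for the semigroup part; the paper resolves your hesitation about small $\lambda$ in the resolvent bound exactly as you suggest, via $(1+x)^{\alpha}\leq 1+x^{\alpha}$ and $-\lambda^{\Omega}_{ij,h}\leq\lambda-\lambda^{\Omega}_{ij,h}$. One small correction: the homogeneous bound $r\geq c'|\lambda^{\Omega}_{ij}|$ asserted in your final paragraph fails at the lowest mode $(i,j)=(0,1)$, where $r=\lambda^{\Omega}_{01,h}-\lambda^{\Omega}_{01,h}=0$ while $|\lambda^{\Omega}_{01}|=\pi^{2}$; but the affine bound $1-\lambda^{\Omega}_{ij}\leq C(1+r)$ that your argument actually invokes is valid uniformly in $h\in(0,1]$ (with $C=1+\pi^{2}$), so the proof goes through.
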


\begin{proof}
Since $X_{fin}(\Omega)$ is dense in $X^{s}(\Omega)$ (see part 2 of Lemma \ref{H2E:lem6}) one can assume that $w\in X_{fin}(\Omega)$ i.e. $w=\sum_{i,j\geq0}a_{ij}w_{ij}$ where only finitely many $a_{ij}$ are nonzero. Define
\begin{align*}
M_1=\sup\Big\{\frac{(1-\lambda^{\Omega}_{ij})^{s'-s}}{\lambda-\lambda^{\Omega}_{ij,h}}: \ i\geq0, j\geq1 \Big\}.
\end{align*} 
Observe that  
\begin{align}
&(I-EP)w_{ij}=w_{ij}-E(u_{i}\delta_{0j})=w_{ij}-w_{i0}\delta_{0j}=w_{ij}(1-\delta_{0j}),\label{H2E:clever1}\\
&(w_{ij}|w_{kl})_{X^{s'}(\Omega)}=(1-\lambda_{ij}^{\Omega})^{2s'}\delta_{ij}\delta_{jl},\label{H2E:clever2}\\
&\n{w_{ij}}_{X^{s'}(\Omega)}=(1-\lambda_{ij}^{\Omega})^{s'-s}\n{w_{ij}}_{X^s(\Omega)}\label{H2E:clever3}.
\end{align}
Indeed \eqref{H2E:clever1} is a simple consequence of the definitions of operators $E$ and $P$ (see \eqref{H2E:defE},\eqref{H2E:defP}) and \eqref{H2E:wij}, while \eqref{H2E:clever2} and \eqref{H2E:clever3} follow from Lemma \ref{H2E:lem6}.
Using \eqref{H2E:clever1}, \eqref{H2E:clever2} and \eqref{H2E:clever3} we estimate
\begin{align*}  
&\n{R(\lambda,A_h)(I-EP)w}_{X^{s'}(\Omega)}^2=\n{\sum_{i\geq0,j\geq1}\frac{1}{\lambda-\lambda_{ij,h}^{\Omega}}a_{ij}w_{ij}}^2_{X^{s'}(\Omega)}=\sum_{i\geq0,j\geq1}\frac{1}{(\lambda-\lambda_{ij,h}^{\Omega})^2}a_{ij}^2\n{w_{ij}}_{X^{s'}(\Omega)}^2\\
&=\sum_{i\geq0,j\geq1}\frac{1}{(\lambda-\lambda_{ij,h}^{\Omega})^2}a_{ij}^2(1-\lambda_{ij}^{\Omega})^{2(s'-s)}\n{w_{ij}}_{X^s(\Omega)}^2\leq M_1^2\sum_{i\geq0,j\geq1}a_{ij}^2\n{w_{ij}}_{X^s(\Omega)}^2\\
&=M_1^2\n{(I-EP)w}^2_{X^s(\Omega)},
\end{align*}
To finish the proof of \eqref{H2E:resestiron} it is left to show that
\begin{align}
M_1\leq\frac{1}{\lambda-\lambda^{\Omega}_{01,h}}(1+(\lambda-\lambda^{\Omega}_{01,h})^{s'-s})\label{H2E:resestironpr}.
\end{align}
Using condition $0\leq s'-s\leq 1$ and the following inequality
\begin{align*}
(1+x)^{\alpha}\leq 1+x^{\alpha} \ {\rm for} \ x>0,  \ 0\leq \alpha\leq1 
\end{align*}
we estimate
\begin{align*}
&\frac{(1-\lambda^{\Omega}_{ij})^{s'-s}}{\lambda-\lambda^{\Omega}_{ij,h}}\leq\frac{(1-\lambda^{\Omega}_{ij,h})^{s'-s}}{\lambda-\lambda^{\Omega}_{ij,h}}\leq\frac{1+(-\lambda^{\Omega}_{ij,h})^{s'-s}}{\lambda-\lambda^{\Omega}_{ij,h}}\leq\frac{1+(\lambda-\lambda^{\Omega}_{ij,h})^{s'-s}}{\lambda-\lambda^{\Omega}_{ij,h}}\\
&=\frac{1}{(\lambda-\lambda^{\Omega}_{ij,h})^{1-(s'-s)}}+\frac{1}{\lambda-\lambda^{\Omega}_{ij,h}}\leq\frac{1}{(\lambda-\lambda^{\Omega}_{01,h})^{1-(s'-s)}}+\frac{1}{\lambda-\lambda^{\Omega}_{01,h}}=\frac{1}{\lambda-\lambda^{\Omega}_{01,h}}(1+(\lambda-\lambda^{\Omega}_{01,h})^{s'-s})
\end{align*}
from which \eqref{H2E:resestironpr} and consequently \eqref{H2E:resestiron} follows. We move to the proof of \eqref{H2E:semestiron}. Reasoning as in the proof of \eqref{H2E:resestiron} we obtain that for $w\in X^s(\Omega)$
\begin{align*}
\n{e^{tA_h}(w-EPw)}_{X^{s'}(\Omega)}\leq M_2\n{w-EPw}_{X^s(\Omega)},
\end{align*}
where 
\begin{align*}
M_2=\sup\{(1-\lambda^{\Omega}_{ij})^{s'-s}\exp(t\lambda^{\Omega}_{ij,h}): i\geq0, j\geq1\}.
\end{align*}
Using inequality \eqref{H2E:ineq1} from Lemma \ref{H2E:ineq} we estimate for $i\geq0, j\geq 1$
\begin{align*}
&(1-\lambda^{\Omega}_{ij})^{s'-s}\exp(t\lambda^{\Omega}_{ij,h})=(1+(i\pi/2)^2+(j\pi)^2)^{s'-s}\exp(-t((i\pi/2)^2+(j\pi/h)^2))\\
&=(1+(i\pi/2)^2+(j\pi)^2)^{s'-s}\exp(-\frac{t}{h^2}(1+(i\pi/2)^2+(j\pi)^2))\exp(-t(i\pi/2)^2)\exp(\frac{t}{h^2}(1+(i\pi/2)^2))\\
&\leq\sup\{x^{s'-s}\exp(-\frac{t}{h^2}x): x\geq 1+(i\pi/2)^2+\pi^2\}\exp(-t(i\pi/2)^2)\exp(\frac{t}{h^2}(1+(i\pi/2)^2))\\
&\leq C((\frac{h^2}{t})^{s'-s}+(1+(i\pi/2)^2+\pi^2)^{s'-s})\exp(-\frac{t}{h^2}(1+(i\pi/2)^2+\pi^2)) \exp(-t(i\pi/2)^2)\exp(\frac{t}{h^2}(1+(i\pi/2)^2))\\
&=C((\frac{h^2}{t})^{s'-s}+(1+(i\pi/2)^2+\pi^2)^{s'-s})\exp(-t(i\pi/2)^2)\exp(-\frac{t\pi^2}{h^2})\\
&\leq C(\frac{1}{t^{s'-s}}+1+\frac{(t(i\pi/2)^{2})^{s'-s}}{t^{s'-s}})\exp(-t(i\pi/2)^2)\exp(-\frac{t\pi^2}{h^2})\\
&\leq C(1+\frac{1}{t^{s'-s}}+\frac{1}{t^{s'-s}}\sup\{x^{s'-s}\exp(-x): x\geq0\})\exp(-\frac{t\pi^2}{h^2})\leq C(1+\frac{1}{t^{s'-s}})\exp(-\frac{t\pi^2}{h^2}).
\end{align*}

\end{proof}

\subsubsection{The multiplication operator}

For $1\leq p<\infty$ and $0\geq f\in L_p(I)$ we define the multiplication operator $M_f$
\begin{align}
M_f:L_{\infty}(I)\supset D(M_f)\to L_{\infty}(I), \ M_fu=fu,
\end{align}
where $D(M_f)=\{u\in L_{\infty}(I): fu\in L_{\infty}(I)\}$.
Observe that if $u\in L_{\infty}(I)$ and $Re(\lambda)>0$ then
$R(\lambda,M_f)u=\frac{u}{\lambda-f}\in L_{\infty}(I)$ and $\n{R(\lambda,M_f)}_{\mathcal{L}(L_{\infty}(I))}\leq 1/|\lambda|$, which proves that $M_f$ is sectorial and thus generates an analytic semigroup $e^{tM_f}$:
\begin{align*}
e^{tM_f}u=e^{tf}u, \ u\in L_{\infty}(I).
\end{align*}
Basic estimates concerning $e^{tM_f}$ are collected in the following\\
\begin{lem}\label{H2E:Multi}
Assume that $0\geq f,f_1,f_2\in L_p(I)$. Then for $t,t'\geq0$ 
\begin{align}
\n{e^{tM_f}}_{\mathcal{L}(L_{\infty}(I))}&\leq1,\label{H2E:Multi0}\\
\n{e^{t'M_f}-e^{tM_f}}_{\mathcal{L}(L_{\infty}(I),L_{p}(I))}&\leq |t'-t|\n{f}_{L_p(I)},\label{H2E:Multi1}\\
\n{e^{tM_{f_1}}-e^{tM_{f_2}}}_{\mathcal{L}(L_{\infty}(I),L_{p}(I))}&\leq t\n{f_1-f_2}_{L_p(I)}\label{H2E:Multi2}. 
\end{align} 
\end{lem}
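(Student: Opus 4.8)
The plan is to reduce all three inequalities to the single elementary pointwise estimate
\[
|e^{a}-e^{b}|\le|a-b|\qquad\text{for }a,b\le0,
\]
which is immediate from the mean value theorem: $e^{a}-e^{b}=e^{\xi}(a-b)$ for some $\xi$ between $a$ and $b$, hence $\xi\le0$ and $e^{\xi}\le1$. This is the only place where the sign hypotheses $0\ge f,f_1,f_2$ of the lemma enter essentially, since they guarantee that the exponents $tf(x)$, $t'f(x)$, $tf_1(x)$, $tf_2(x)$ are all nonpositive whenever $t,t'\ge0$.

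For \eqref{H2E:Multi0} I would simply note that $f\le0$ and $t\ge0$ force $0\le e^{tf(x)}\le1$ for a.e.\ $x\in I$, so that $\n{e^{tM_f}u}_{L_\infty(I)}=\n{e^{tf}u}_{L_\infty(I)}\le\n{u}_{L_\infty(I)}$; taking the supremum over the unit ball of $L_\infty(I)$ gives the claim. For \eqref{H2E:Multi1}, apply the elementary estimate with $a=t'f(x)$ and $b=tf(x)$ to obtain $|e^{t'f(x)}-e^{tf(x)}|\le|t'-t|\,|f(x)|$ a.e., and then use the trivial bound $\n{gu}_{L_p(I)}\le\n{g}_{L_p(I)}\n{u}_{L_\infty(I)}$ with $g=e^{t'f}-e^{tf}$:
\[
\n{(e^{t'M_f}-e^{tM_f})u}_{L_p(I)}\le|t'-t|\,\n{f}_{L_p(I)}\n{u}_{L_\infty(I)}.
\]
Estimate \eqref{H2E:Multi2} follows in exactly the same way, this time choosing $a=tf_1(x)$, $b=tf_2(x)$, so that $|e^{tf_1(x)}-e^{tf_2(x)}|\le t\,|f_1(x)-f_2(x)|$ a.e., followed by the same bound with $g=e^{tf_1}-e^{tf_2}$.

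There is essentially no obstacle here: once the pointwise inequality for $\exp$ on $(-\infty,0]$ is recorded, each estimate is a one-line consequence of writing out $e^{tM_f}u=e^{tf}u$ and applying $\n{gu}_{L_p(I)}\le\n{g}_{L_p(I)}\n{u}_{L_\infty(I)}$. The only point worth stating with care is that all the relevant exponents are nonpositive, which is precisely the standing sign hypothesis of the lemma specialised to $f,f_1,f_2$.
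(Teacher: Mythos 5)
Your proof is correct and follows essentially the same route as the paper: the pointwise bounds $0<e^x\le 1$ and $|e^x-e^y|\le|x-y|$ for nonpositive exponents, combined with the factorization $\n{gu}_{L_p(I)}\le\n{g}_{L_p(I)}\n{u}_{L_\infty(I)}$. Nothing is missing.
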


\begin{proof}
Using inequalities
\begin{align*}
0<e^x\leq 1, \ |e^{x}-e^{y}|\leq|x-y|, \ x,y<0,
\end{align*}
we get for $u\in L_{\infty}(I), t,t'\geq0$
\begin{align*}
\n{e^{tM_f}u}_{\infty}&=\n{e^{tf}u}_{L_{\infty}(I)}\leq\n{e^{tf}}_{\infty}\n{u}_{L_{\infty}(I)}\leq\n{u}_{\infty},\\
\n{(e^{t'M_f}-e^{tM_f})u}_{L_p(I)}&=\n{(e^{t'f}-e^{tf})u}_{L_p(I)}\leq\n{e^{t'f}-e^{tf}}_{L_p(I)}\n{u}_{\infty}\leq |t'-t|\n{f}_{L_p(I)}\n{u}_{\infty},\\
\n{(e^{tM_{f_1}}-e^{tM_{f_2}})u}_{L_p(I)}&=\n{(e^{tf_1}-e^{tf_2})u}_{L_p(I)}\leq\n{e^{tf_1}-e^{tf_2}}_{L_p(I)}\n{u}_{\infty}\leq t\n{f_1-f_2}_{L_p(I)}\n{u}_{\infty},
\end{align*}
from which \eqref{H2E:Multi0}, \eqref{H2E:Multi1} and \eqref{H2E:Multi2} follow.
\end{proof}

\section{The case of a regular source}\label{H2E:secregular}

Denote
\begin{align*}
\boldsymbol{u}&=(u_1,u_2,u_3,u_4,u_5)\in\mathbb{R}^5, \ 
f_1,f_2,f_3,f_4,f_5:\mathbb{R}^5\to\mathbb{R},\\
f_1(\boldsymbol{u})&=-(c_1+u_3)u_1+c_2u_2+c_4u_4,\\
f_2(\boldsymbol{u})&=c_1u_1-(b_2+c_2+c_3u_3)u_2+c_5u_5,\\
f_3(\boldsymbol{u})&=-(b_3+u_1+c_{3}u_2)u_3+c_4u_4+c_5u_5+p_3,\\
f_4(\boldsymbol{u})&=u_1u_3-(b_4+c_4)u_4,\\
f_5(\boldsymbol{u})&=c_3u_2u_3-(b_5+c_5)u_5.
\end{align*}

In this section we study system \eqref{H2E:System} with $\delta$ substituted by a regular function $\omega$:
\bs\label{H2E:SystemReg}
\eq{
\partial_t u_1+div(J_h(u_1))+b_1u_1&=0,&& (t,x)\in\Omega_{T}\label{H2E:SystemRegA}\\
\partial_t u_2-d\partial^2_{x_1} u_2&=f_2(\boldsymbol{u}),&&(t,x)\in(\partial_1\Omega)_{T}\label{H2E:SystemRegB}\\
\partial_t u_3&=f_3(\boldsymbol{u}),&&(t,x)\in(\partial_1\Omega)_{T}\label{H2E:SystemRegC}\\
\partial_t u_4&=f_4(\boldsymbol{u}),&&(t,x)\in(\partial_1\Omega)_{T}\label{H2E:SystemRegD}\\
\partial_t u_5&=f_5(\boldsymbol{u}),&&(t,x)\in(\partial_1\Omega)_{T}\label{H2E:SystemRegE}
}
\es
with boundary and initial conditions
\begin{align*}
-J_h(u_1)\nu&=0,&&(t,x)\in(\partial_0\Omega)_{T}\\
-J_h(u_1)\nu&=f_1(\boldsymbol{u})+\omega,&&(t,x)\in(\partial_1\Omega)_{T}\\
\partial_{x_1} u_2&=0,&&(t,x)\in(\partial\partial_1\Omega)_{T}\\
\boldsymbol{u}(0,\cdot)&=\boldsymbol{u_0}
\end{align*}

To obtain well-posedness of system \eqref{H2E:SystemReg} we interpret it as a system of abstract ODE's \eqref{H2E:abs1}-\eqref{H2E:abs2}.

Assume that
\bs\label{H2E:Sysucond}
\eq{
&d, \boldsymbol{b}>0,\  \boldsymbol{c},\boldsymbol{p}\geq 0,\\
&1/2<s<s'<3/4,\label{H2E:sass}\\
&0\leq\omega\in L_{\infty}(I).
}
\es

Define spaces
\begin{align*}
\mathcal{X}_1&=X^{-1+s'}(\Omega), \mathcal{X}_2=X^0(I), \mathcal{X}_3=\mathcal{X}_4=\mathcal{X}_5=L_{\infty}(I)\\
\mathcal{X}_1^1&=X^{s'}(\Omega), \mathcal{X}_2^1=X^1(I), \mathcal{X}_3^1=\mathcal{X}_4^1=\mathcal{X}_5^1=L_{\infty}(I)
\end{align*}
Set $\alpha=(1+s-s',1/2,1/2,1/2,1/2)$ and observe that due to Lemma \ref{H2E:reiteration} we have 
\begin{align*}
\mathcal{X}^{\alpha}=X^s(\Omega)\times X^{1/2}(I)\times (L_{\infty}(I))^3.
\end{align*}
Define operators
\begin{align*}
\mathcal{A}_1u&=(A_h-b_1)u, \ u\in \mathcal{X}_1^1,\\
\mathcal{A}_2u&=dA_{0}u, \ u\in \mathcal{X}_2^1\\
\mathcal{A}_i&=0, \ i=3,4,5
\end{align*}
and for $\bsym{u}\in\mathcal{X}^{\alpha}$ set
\begin{align*} 
\mathcal{F}_1(\bsym{u})&=Tr'[f_1(Tr(u_1),u_2,\ldots,u_5)+\omega]\\
\mathcal{F}_i(\bsym{u})&=f_i(Tr(u_1),u_2,\ldots,u_5), \ i=2,3,4,5.
\end{align*}
The main result of the present section is the following\\

\begin{theo}\label{H2E:existencereg}
Assume \eqref{H2E:Sysucond}. Then for every $0\leq\bsym{u}_0\in\mathcal{X}^{\alpha}$ system \eqref{H2E:SystemReg} has a unique globally in time defined $\mathcal{X}^{\alpha}$ solution $\bsym{u}$. The solution $\bsym{u}$ is nonnegative and satisfies for all times the following estimate
\begin{align}
\n{u_3(t)}_{\infty}+\n{u_4(t)}_{\infty}+\n{u_5(t)}_{\infty}\leq C,\label{H2E:linftybound}
\end{align}
where $C$ depends only on $\n{u_{30}}_{\infty}+\n{u_{40}}_{\infty}+\n{u_{50}}_{\infty}, b_3,b_4,b_5,p_3$.
\end{theo}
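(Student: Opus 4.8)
The plan is to recast \eqref{H2E:SystemReg} as the abstract ODE system \eqref{H2E:abs1}--\eqref{H2E:abs2} with the data $(\mathcal A_i,\mathcal F_i,\mathcal X_i,\mathcal X_i^1,\alpha_i)$ introduced above, apply Lemma \ref{H2E:exlem} to obtain a unique maximal $\mathcal X^{\alpha}$ solution on an interval $[0,T_{\max})$, prove that it is nonnegative, use the nonnegativity to establish \eqref{H2E:linftybound}, and finally combine \eqref{H2E:linftybound} with the blow-up criterion of Lemma \ref{H2E:exlem} to conclude $T_{\max}=\infty$.

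\emph{Step 1 (local well-posedness).} We verify the three hypotheses of Lemma \ref{H2E:exlem}. Since $A_h$ and $A_0$ generate analytic $C_0$-semigroups (Section \ref{H2E:secOperators}), so do the bounded perturbation $\mathcal A_1=A_h-b_1$ on $\mathcal X_1=X^{-1+s'}(\Omega)$ and the rescaling $\mathcal A_2=dA_0$ on $\mathcal X_2=X^0(I)$, while $\mathcal A_3=\mathcal A_4=\mathcal A_5=0$ trivially generate analytic semigroups on $L_\infty(I)$. Next, since $1/2<s<s'<3/4$, Lemma \ref{H2E:tracelem} gives $Tr\in\mathcal L(X^s(\Omega),X^{s-1/4}(I))$ with $s-1/4>1/4$ and $Tr\in\mathcal L(X^{1-s'}(\Omega),X^0(I))$ (as $1-s'>1/4$ and $3/4-s'>0$), hence $Tr'\in\mathcal L(X^0(I),\mathcal X_1)$ by transposition; by Lemma \ref{H2E:imbeddings} both $X^{s-1/4}(I)$ and $X^{1/2}(I)$ embed into $C(\overline I)$, so for $\bsym u\in\mathcal X^{\alpha}=X^s(\Omega)\times X^{1/2}(I)\times(L_\infty(I))^3$ every argument of the quadratic polynomials $f_i$ lies in $L_\infty(I)$. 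Therefore $\mathcal F_i(\bsym u)=f_i(Tr(u_1),u_2,\dots,u_5)\in\mathcal X_i$ for $i\in\{2,3,4,5\}$, while for $i=1$ one has $f_1(Tr(u_1),u_2,\dots,u_5)+\omega\in L_\infty(I)\subset X^0(I)$, whence $\mathcal F_1(\bsym u)=Tr'[\,\cdot\,]\in\mathcal X_1$; Lipschitz continuity of each $\mathcal F_i$ on bounded sets of $\mathcal X^{\alpha}$ follows because multiplication is Lipschitz on bounded sets of $L_\infty(I)$ and $Tr,Tr'$ are bounded linear. Finally $u_{0i}\in\mathcal X_i^{\alpha_i}$ is precisely the hypothesis $\bsym u_0\in\mathcal X^{\alpha}$ (by Lemma \ref{H2E:reiteration}). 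Lemma \ref{H2E:exlem} then yields the unique maximal $\mathcal X^{\alpha}$ solution $\bsym u$ on $[0,T_{\max})$, the Duhamel formulas, and the blow-up alternative \eqref{H2E:blowupcond}.

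\emph{Step 2 (nonnegativity).} Each $f_i$ has the quasipositive form $f_i(\bsym u)=-g_i(\bsym u)u_i+h_i(\bsym u)$ with $g_i,h_i\ge0$ whenever $\bsym u\ge0$ (for instance $g_1=c_1+u_3$, $h_1=c_2u_2+c_4u_4$, and analogously for the other indices), and the boundary flux $f_1(\bsym u)+\omega$ has the same structure since $\omega\ge0$. Replacing, in every $g_i$ and $h_i$, each $u_j$ by its positive part $u_j^{+}$ produces a modified system; since $u\mapsto u^{+}$ is Lipschitz on $L_\infty(I)$ and on $C(\overline I)$, it still satisfies the hypotheses of Lemma \ref{H2E:exlem} and hence has a unique maximal $\mathcal X^{\alpha}$ solution $\hat{\bsym u}$. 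Testing the $\hat u_1$- and $\hat u_2$-equations of the modified system with $-\hat u_1^{-}$ and $-\hat u_2^{-}$ (legitimate since $\hat u_1(t),\hat u_2(t)\in H^1$ for $t>0$) and using that the remaining boundary and source terms are nonnegative on the positive cone gives $\frac{d}{dt}\n{\hat u_i^{-}(t)}_{L_2(\cdot)}^2\le0$ for $i=1,2$, while the $\hat u_3,\hat u_4,\hat u_5$-equations, being scalar linear ODEs driven by nonnegative data (equivalently, using positivity of $e^{tA_h}$, $e^{tdA_0}$ and of the multiplication semigroups of Lemma \ref{H2E:Multi}), give $\hat u_i\ge0$ for $i=3,4,5$. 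As $\bsym u_0\ge0$ we conclude $\hat{\bsym u}\ge0$, and on the positive cone the modified nonlinearities coincide with the original ones, so $\hat{\bsym u}$ is a nonnegative $\mathcal X^{\alpha}$ solution of \eqref{H2E:SystemReg}; by the uniqueness in Lemma \ref{H2E:exlem} and a standard continuation argument, $\bsym u=\hat{\bsym u}\ge0$ on $[0,T_{\max})$.

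\emph{Step 3 (uniform bound and global existence).} Since $\mathcal A_i=0$ for $i\in\{3,4,5\}$, the functions $u_3,u_4,u_5$ lie in $C^1([0,T_{\max});L_\infty(I))$, and summing \eqref{H2E:SystemRegC}--\eqref{H2E:SystemRegE} the quantity $v=u_3+u_4+u_5$ satisfies $\partial_tv=-b_3u_3-b_4u_4-b_5u_5+p_3\le-\beta v+p_3$ a.e. on $I$, with $\beta=\min\{b_3,b_4,b_5\}>0$ and the inequality using Step 2. A pointwise Gronwall estimate gives $v(t)\le\max\{v(0),p_3/\beta\}$ a.e., hence $\n{u_3(t)}_{\infty}+\n{u_4(t)}_{\infty}+\n{u_5(t)}_{\infty}\le3\n{v(t)}_{\infty}\le C$ with $C$ of the asserted form, which is \eqref{H2E:linftybound}. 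To exclude $T_{\max}<\infty$ we use the structural fact that every quadratic term in $f_1,\dots,f_5$ carries a factor $u_3$ (namely $u_3u_1$ and $u_3u_2$): once $\n{u_3(t)}_{\infty}\le C$ the nonlinearities are affine in the remaining variables with $L_\infty(I)$-bounded coefficients, so together with $X^{s-1/4}(I),X^{1/2}(I)\subset C(\overline I)$ and the mapping property of $Tr'$ from Step 1 we obtain
\begin{align*}
\sum_{i=1}^{5}\n{\mathcal F_i(\bsym u(t))}_{\mathcal X_i}\le C\big(\n{\bsym u(t)}_{\mathcal X^{\alpha}}+1\big),\qquad t\in[0,T_{\max}).
\end{align*}
By \eqref{H2E:sublinear} in Lemma \ref{H2E:exlem} this forces $T_{\max}=\infty$, completing the proof. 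I expect Step 2 to be the main obstacle: nonnegativity must be proved in the negative-order/$L_\infty$ functional setting dictated by the (regularised) point source, which enters through $Tr'$, so one cannot simply quote a textbook invariant-region theorem and must instead argue via the quasipositive splitting of the $f_i$, the positivity of the constituent semigroups, and a truncation that keeps the Lipschitz hypotheses of Lemma \ref{H2E:exlem} intact.
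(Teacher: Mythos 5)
Your proposal is correct and follows essentially the same route as the paper: local existence via Lemma \ref{H2E:exlem} with the stated choice of spaces and the trace/embedding estimates, nonnegativity via the truncated system $f_{i+}(\bsym{v})=f_i((v_1)_+,\ldots,(v_5)_+)$ tested against negative parts and identified with the original solution by uniqueness and the blow-up alternative, and globality by summing \eqref{H2E:SystemRegC}--\eqref{H2E:SystemRegE} to get \eqref{H2E:linftybound} and then invoking the sublinearity criterion \eqref{H2E:sublinear} (since every quadratic term carries a factor $u_3$). The only cosmetic difference is that the paper also tests the $v_3,v_4,v_5$ equations with $(v_i)_-$ rather than arguing via scalar ODE comparison, but the two are interchangeable here.
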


\begin{proof}
\
\\
\textbf{Step 1 - local existence of solution.}
\

Using assumption \eqref{H2E:sass}, Lemma \ref{H2E:imbeddings} and Lemma \ref{H2E:tracelem} we get
\begin{align*}
\mathcal{X}^{\alpha}&=X^s(\Omega)\times X^{1/2}(I)\times (L_{\infty}(I))^3\subset C(\overline{\Omega})\times C(\overline{I})\times (L_{\infty}(I))^3,\\
Tr'&\in\mathcal{L}(L_{\infty}(I),\mathcal{X}_1),\\
Tr&\in\mathcal{L}(\mathcal{X}_1^{\alpha_1},C(\overline{I}))
\end{align*}
from where we deduce that for $\bsym{u},\bsym{w}\in\mathcal{X}^\alpha$ the following estimates hold
\begin{align}
&\sum_{i=1}^5\n{\mathcal{F}_i(\bsym{u})}_{\mathcal{X}_i}\leq C\Big\{(1+\sum_{i=1}^2\n{u_i}_{\mathcal{X}_i^{\alpha_i}})(1+\n{u_3}_{\mathcal{X}_3^{\alpha_3}})+\sum_{i=4}^5\n{u_i}_{\mathcal{X}_i^{\alpha_i}}+\n{\omega}_{\infty}\Big\}\label{H2E:Fsublinear}\\
&\sum_{i=1}^5\n{\mathcal{F}_i(\bsym{u})-\mathcal{F}_i(\bsym{u}')}_{\mathcal{X}_i^{\alpha_i}}\leq C\Big\{\sum_{i=1}^2\n{u_i-u_i'}_{\mathcal{X}_i^{\alpha_i}}(1+\n{u_3}_{\mathcal{X}_3^{\alpha_3}}+\n{u_3'}_{\mathcal{X}_3^{\alpha_3}})\nonumber\\
&+\n{u_3-u_3'}_{\mathcal{X}_3^{\alpha_3}}(1+\sum_{i=1}^2(\n{u_i}_{\mathcal{X}_i^{\alpha_i}}+\n{u_i'}_{\mathcal{X}_i^{\alpha_i}}))+\sum_{i=4}^5\n{u_i-u_i'}_{\mathcal{X}_i^{\alpha_i}}\Big\}\nonumber
\end{align}

Using above estimates we conclude that assumptions of Lemma \ref{H2E:exlem} are satisfied which results in the existence of a unique maximally defined $\mathcal{X}^{\alpha}$ solution to \eqref{H2E:SystemReg}. 
\
\newline
\\
\textbf{Step 2 - nonnegativity of solution.}
\

Reasoning as in Step 1 we obtain that system 
\bs\label{H2E:SystemRegPlus}
\eq{
\partial_t v_1+div(J_h(v_1))+b_1v_1&=0,&& (t,x)\in\Omega_{T}\label{H2E:SystemRegPlusA}\\
\partial_t v_2-d\partial^2_{x_1} v_2&=f_{2+}(\boldsymbol{v}),&&(t,x)\in(\partial_1\Omega)_{T}\label{H2E:SystemRegPlusB}\\
\partial_t v_3&=f_{3+}(\boldsymbol{v}),&&(t,x)\in(\partial_1\Omega)_{T}\label{H2E:SystemRegPlusC}\\
\partial_t v_4&=f_{4+}(\boldsymbol{v}),&&(t,x)\in(\partial_1\Omega)_{T}\label{H2E:SystemRegPlusD}\\
\partial_t v_5&=f_{5+}(\boldsymbol{v}),&&(t,x)\in(\partial_1\Omega)_{T}\label{H2E:SystemRegPlusE}
}
\es
with boundary and initial conditions
\begin{align*}
-J_h(v_1)\nu&=0,&&(t,x)\in(\partial_0\Omega)_{T}\\
-J_h(v_1)\nu&=f_{1+}(\boldsymbol{v})+\omega,&&(t,x)\in(\partial_1\Omega)_{T}\\
\partial_{x_1} v_2&=0,&&(t,x)\in(\partial\partial_1\Omega)_{T}\\
\boldsymbol{v}(0,\cdot)&=\boldsymbol{u_0}
\end{align*}

where for $i=1,\ldots, 5$ and $\bsym{v}\in\mathbb{R}^5$
\begin{align*}
f_{i+}(\bsym{v})=f_i((v_1)_{+},\ldots,(v_5)_{+})
\end{align*}
has a unique maximal $\mathcal{X}^{\alpha}$ solution $\bsym{v}(t)$. Note by $T_{\max}'$ its time of existence.\\
Testing \eqref{H2E:SystemRegPlusA},$\ldots$,\eqref{H2E:SystemRegPlusE} by $(v_1)_{-},\ldots,(v_5)_{-}$ we obtain 
\begin{align*}
&-\frac{1}{2}\frac{d}{dt}\n{(v_1)_{-}}^2_{X(\Omega)}-
\n{\partial_{x_1}(v_1)_{-}}^2_{X(\Omega)}-h^{-2}\n{\partial_{x_2}(v_1)_{-}}^2_{X(\Omega)}-b_1\n{(v_1)_-}^2_{X(\Omega)}=\\
&\int_I(f_{1+}(v_1(x_1,0),v_2(x_1),\ldots,v_5(x_1))+\omega(x_1))(v_1(x_1,0))_{-}dx_1\\
&-\frac{1}{2}\frac{d}{dt}\n{(v_2)_{-}}^2_{X(I)}-d\n{\partial_{x_1}(v_2)_{-}}^2_{X(I)}=\int_If_{2+}(v_1(x_1,0),v_2(x_1),\ldots,v_5(x_1))(v_2(x_1))_{-}dx_1\\
&-\frac{1}{2}\frac{d}{dt}\n{(v_i)_{-}}^2_{X(I)}=\int_If_{i+}(v_1(x_1,0),v_2(x_1),\ldots,v_5(x_1))(v_i(x_1))_{-}dx_1, \ i=3,4,5.
\end{align*}
Since right hand sides of above equalities are nonnegative we obtain that
\begin{align*}
&\frac{d}{dt}[\n{(v_1)_{-}}^2_{X(\Omega)}+\sum_{i=2}^5\n{(v_i)_{-}}^2_{X(I)}]\leq0\\
&\n{(v_1(t))_{-}}^2_{X(\Omega)}+\sum_{i=2}^5\n{(v_i(t))_{-}}^2_{X(I)}\leq \n{(v_{01})_{-}}^2_{X(\Omega)}+\sum_{i=2}^5\n{(v_{0i})_{-}}^2_{X(I)}=0.
\end{align*}
Which proves that the only solution of system \eqref{H2E:SystemRegPlus} is nonnegative. Since for $\boldsymbol{v}\geq0$ there is $f_{i+}(\bsym{v})=f_{i}(\bsym{v})$ we see that $T_{\max}\geq T_{\max}'$ and $\bsym{u}(t)=\bsym{v}(t)$ for $t\in[0,T_{max}')$. Finally observe that if $T_{\max}'<\infty$ then owing to the blow-up condition \eqref{H2E:blowupcond}
\begin{align*}
\limsup_{t\to {T_{\max}'}^{-}}\n{\bsym{u}(t)}_{\mathcal{X}^{\alpha}}=\limsup_{t\to {T_{\max}'}^{-}}\n{\bsym{v}(t)}_{\mathcal{X}^{\alpha}}=\infty
\end{align*}
whence $T_{\max}=T_{\max}'$ and finally $\bsym{u}(t)\geq0$ for $t\in[0,T_{\max})$.

\textbf{Step 3 - global solvability: $T_{\max}=\infty$.}
\

Adding equations \eqref{H2E:SystemRegC},\eqref{H2E:SystemRegD},\eqref{H2E:SystemRegE} and using nonnegativity of $\bsym{u}$ we get
\begin{align*}
\partial_t(u_3+u_4+u_5)+\min\{b_3,b_4,b_5\}(u_3+u_4+u_5)\leq p_3
\end{align*}
from which we conclude that there exists $C$ depending only on $\n{u_{30}}_{\infty}+\n{u_{40}}_{\infty}+\n{u_{50}}_{\infty}, b_3,b_4,b_5,p_3$ such that
\begin{align}
\n{u_3(t)}_{\infty}+\n{u_4(t)}_{\infty}+\n{u_5(t)}_{\infty}\leq C,\ t\in[0,T_{\max})\label{H2E:linftybound2}.
\end{align}
Using \eqref{H2E:linftybound2} and \eqref{H2E:Fsublinear} we get that condition \eqref{H2E:sublinear} is satisfied which gives $T_{\max}=\infty$.

\end{proof}

\section{The case of a singular source and dimension reduction}\label{H2E:secdimred}

We begin by introducing auxiliary function which are used in the definition of M-mild solution presented in section \ref{H2E:secM-mild}.

\subsection{Auxiliary functions}\label{H2E:secAuxiliary}

Let us recall the definition of the standard one dimensional mollifier
\begin{align*}
\eta(x_1)=\begin{cases} C\exp\Big(\frac{1}{|x_1|^2-1}\Big), \quad |x_1|<1\\
0, \quad |x_1|\geq1
\end{cases},\quad
\eta^{\epsilon}(x_1)=\eta(x_1/\epsilon)/\epsilon, \ \epsilon>0
\end{align*}
where $C$ is such that $\int_{\mathbb{R}}\eta=1$. 
\\

The next lemma concerns convergence of $\eta^{\epsilon}$ as $\epsilon\to0$.\\
\begin{lem}\label{H2E:deltacon}
For any $0<s$ the following convergence holds
\begin{align}
\lim_{\epsilon\to0^+}\n{\eta^{\epsilon}-\delta}_{X^{-1/4-s}(I)}=0.\label{H2E:deltaconv}
\end{align}
\end{lem}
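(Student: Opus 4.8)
The plan is to exploit the explicit Fourier (cosine) description of the norm on $X^{-1/4-s}(I)$: writing $\sigma = 1/4 + s > 0$, we have
\[
\n{g}_{X^{-\sigma}(I)}^2 = \sum_{i\in\mathbb{N}}(1-\lambda_i^I)^{-2\sigma}\,\big|\langle g, u_i\rangle\big|^2
\]
for $g$ in the dual space $X^{-\sigma}(I) = (X^{\sigma}(I))^*$, where $\langle g,u_i\rangle$ denotes the pairing (for $g\in X(I)$ this is just $(g|u_i)_{X(I)}$). Since $1-\lambda_i^I = 1+(i\pi/2)^2 \sim i^2$, the weights behave like $i^{-4\sigma} = i^{-1-4s}$, which is summable; this is exactly what makes $\delta$ (and each $\eta^\epsilon$, a fortiori) lie in $X^{-\sigma}(I)$, because $|\langle\delta,u_i\rangle| = |u_i(0)| \le C$ is bounded. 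First I would record that both $\delta$ and $\eta^\epsilon$ belong to $X^{-\sigma}(I)$: for $\delta$, $\langle\delta,u_i\rangle = u_i(0) = c_{1i}\cos(i\pi/2)$, which is bounded, so $\sum_i (1-\lambda_i^I)^{-2\sigma}|u_i(0)|^2 < \infty$; for $\eta^\epsilon\in L_2(I)\subset X(I)$ this is automatic (note $\eta^\epsilon$ is supported in $I$ once $\epsilon<1$).

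Next I would compute the Fourier coefficients of the difference. By definition of the pairing and of $u_i$,
\[
\langle \eta^\epsilon - \delta, u_i\rangle = \int_I \eta^\epsilon(x_1)u_i(x_1)\,dx_1 - u_i(0) = \int_{\mathbb{R}} \eta^\epsilon(y)\big(u_i(y) - u_i(0)\big)\,dy,
\]
using $\int\eta^\epsilon = 1$ and $\operatorname{supp}\eta^\epsilon\subset(-\epsilon,\epsilon)\subset I$ for small $\epsilon$. Since $u_i$ is smooth with $\n{u_i'}_\infty \le c_{1i}(i\pi/2) \le C i$, we get the pointwise-in-$i$ bound
\[
\big|\langle \eta^\epsilon - \delta, u_i\rangle\big| \le \n{u_i'}_\infty \int_{\mathbb{R}} |y|\,\eta^\epsilon(y)\,dy \le C i \epsilon,
\]
and simultaneously the uniform bound $|\langle \eta^\epsilon - \delta, u_i\rangle| \le |Pw$-type estimate$|$... more simply $\le \int\eta^\epsilon(|u_i(y)|+|u_i(0)|) \le C$ (since $\n{u_i}_\infty \le c_{1i}\le 1$). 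Therefore
\[
\n{\eta^\epsilon - \delta}_{X^{-\sigma}(I)}^2 = \sum_{i\in\mathbb{N}}(1-\lambda_i^I)^{-2\sigma}\,\big|\langle\eta^\epsilon-\delta,u_i\rangle\big|^2 \le \sum_{i\in\mathbb{N}}(1-\lambda_i^I)^{-2\sigma}\min\{C, C i^2\epsilon^2\}.
\]

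Finally I would finish by a standard split-the-sum / dominated-convergence argument. Each term $(1-\lambda_i^I)^{-2\sigma}\min\{C,Ci^2\epsilon^2\}$ tends to $0$ as $\epsilon\to0^+$ for every fixed $i$, and is dominated by the $\epsilon$-independent summable sequence $(1-\lambda_i^I)^{-2\sigma}C$ (summable because $2\sigma = 1/2 + 2s > 1/2$, and $(1-\lambda_i^I)^{-2\sigma}\sim i^{-4\sigma}$ with $4\sigma = 1+4s>1$). Hence by dominated convergence for series the sum tends to $0$, which is \eqref{H2E:deltaconv}. Alternatively, one can make the split explicit: given $\eta>0$ choose $N$ with $\sum_{i\ge N}(1-\lambda_i^I)^{-2\sigma}C < \eta/2$, then choose $\epsilon$ small enough that $\sum_{i<N}(1-\lambda_i^I)^{-2\sigma}Ci^2\epsilon^2 < \eta/2$. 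The only mild subtlety — hardly an obstacle — is making sure the pairing $\langle\eta^\epsilon-\delta,u_i\rangle$ is legitimately computed against the smooth functions $u_i\in X^\sigma(I)$ (which holds since $\sigma<1$ and $u_i\in C^\infty(\overline I)$, so $u_i$ lies in every $X^\sigma(I)$ by Lemma \ref{H2E:characterisation}/\ref{H2E:imbeddings}), and checking $\operatorname{supp}\eta^\epsilon\subset I$ so that no boundary contribution from $\delta$ is missed; both are immediate.
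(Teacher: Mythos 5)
Your proof is correct, but it takes a genuinely different route from the paper. The paper argues softly: since $X^{1/4+s}(I)\subset\subset C(\overline I)$, the dual embedding $\mathcal{M}(\overline I)\subset\subset X^{-1/4-s}(I)$ is compact, so from the bounded family $(\eta^{\epsilon_n})$ in $\mathcal{M}(\overline I)$ one extracts a subsequence converging strongly in $X^{-1/4-s}(I)$, and the limit is identified as $\delta$ by testing against $v\in X^{1/4+s}(I)$ and using the classical weak-star convergence $\eta^\epsilon\rightharpoonup\delta$; a subsequence-of-subsequence argument then gives convergence of the whole family. You instead compute directly with the spectral representation \eqref{H2E:scalprodnegI} of the dual norm, bounding the coefficients $\langle\eta^\epsilon-\delta,u_i\rangle$ by $\min\{C,Ci\epsilon\}$ via $\int\eta^\epsilon=1$ and $\n{u_i'}_\infty\leq Ci$, and conclude by dominated convergence for series. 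Your argument is more elementary (no compactness, no extraction of subsequences) and strictly stronger in that it yields an explicit rate of convergence in $\epsilon$, which the paper's argument cannot produce; the paper's argument is shorter and reuses machinery (Lemma \ref{H2E:imbeddings}) already in place. One small point you should make explicit, as the paper does with its ``without loss of generality $s<1/8$'' reduction: the representation \eqref{H2E:scalprodnegI} of the norm is only available for exponents in $[-1,0)$, i.e.\ for $1/4+s\leq 1$, so for large $s$ you should first reduce to small $s$ via the continuous embedding $X^{-1/4-s'}(I)\subset X^{-1/4-s}(I)$ for $s'<s$. With that caveat your proof is complete.
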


\begin{proof}
Without loss of generality assume that $s<1/8$. It is enough to show that every sequence $(\epsilon_n)_{n=1}^{\infty}$ of positive numbers which converges to $0$ has a subsequence $(\epsilon_{n_k})_{k=1}^{\infty}$ such that 
\begin{align}
\eta^{\epsilon_{n_k}}\to\delta \ {\rm in} \  X^{-1/4-s}(I).\label{H2E:auxlemgoal}
\end{align}
Since $X^{1/4+s}(I)\subset\subset C(\overline{I})$ (see Lemma \ref{H2E:imbeddings}) thus $\mathcal{M}(\ov{I})=C(\overline{I})^*\subset\subset X^{-1/4-s}(I)$. Fix any sequence $(\epsilon_n)_{n=1}^{\infty}$ of positive numbers which converges to $0$. 
Since $(\eta^{\epsilon_n})_{n=1}^{\infty}$ is a bounded sequence in $\mathcal{M}(\ov{I})$ then, by the previous observation, one can choose a subsequence $(\epsilon_{n_k})_{k=1}^{\infty}$ such that 
\begin{align*}
\eta^{\epsilon_{n_k}}\to u \ {\rm in} \  X^{-1/4-s}(I),
\end{align*}
for certain $u\in X^{-1/4-s}(I)$. Finally observe that for any $v\in X^{1/4+s}(I)$ one has
\begin{align*}
\Big<u,v\Big>_{(X^{-1/4-s}(I),X^{1/4+s}(I))}=\lim_{k\to\infty}\Big<\eta^{\epsilon_{n_k}},v\Big>_{(X^{-1/4-s}(I),X^{1/4+s}(I))}=\lim_{k\to\infty}\int_I\eta^{\epsilon_{n_k}}v=v(0),
\end{align*}
where the first equality is a consequence of the fact that strong convergence in $X^{-1/4-s}(I)$ implies convergence in the weak star topology of $X^{-1/4-s}(I)$ while the third equality follows from a well known fact that $\eta^{\epsilon}$ converges to $\delta$ in the weak star topology of $\mathcal{M}(\ov{I})$. Thus $u=\delta$ and \eqref{H2E:auxlemgoal} follows.
\end{proof}

From now on we denote
\begin{align}
\eta^0=\delta, \ \mu=(h,\epsilon)\in(0,1]\times[0,1] \ {\rm and } \ \mu_0=(h,0).
\end{align}
Next we define auxilliary functions $m^{\mu}$ and $m^0$ which play a fundamental role in the definition of M-mild solution which is given in section \ref{H2E:secM-mild}:
\begin{align}
m^{\mu}=R(b_1,A_h)(p_1Tr'\eta^{\epsilon}),\ m^{0}=R(b_1,A_{0})(p_1\delta).\label{H2E:auxdef}
\end{align}

From \eqref{H2E:auxdef} we get that $m^{\mu}$ for $\epsilon>0$ and $m^0$ are $W^1_2$ weak solutions of the following boundary value problems
\bs\label{H2E:bvp}
\eq{
b_1m^{\mu}+div(J_h(m^{\mu}))&=0, && x\in\Omega\\
-J_h(m^{\mu})\nu&=0, && x\in\partial_0\Omega\\
-J_h(m^{\mu})\nu&=p_1\eta^{\epsilon}, && x\in\partial_1\Omega,
}
\es
\bs\label{H2E:bvp0}
\eq{
b_1m^{0}-d\partial^2_{x_1}m^{0}&=p_1\delta, && x_1\in I\\
\partial_{x_1}m^{0}&=0, && x_1\in\partial I.
}
\es

Concernig regularity of $m^0$ and $m^{\mu}$ we have the following \\

\begin{lem}\label{H2E:swallow}
Let $m^0$ and $m^{\mu}$ be given by \eqref{H2E:auxdef} then 
\begin{align}
m^0&\in W^1_{\infty}(I),\label{H2E:regstat0}\\
m^{\mu}&\in W^1_p(\Omega) \ {\rm for} \ {\rm any} \ 1\leq p <2,\label{H2E:regstat}\\
\n{m^{\mu}}_{X^{1/2-s}(\Omega)}&\leq C, \ 0<s\leq3/2 \label{H2E:swallow0} \\
\n{m^{\mu}-m^{\mu_0}}_{X^{1/2-s}(\Omega)}&\leq C\n{\eta^{\epsilon}-\delta}_{X^{-1/4-s}(I)}, \ 0<s\leq3/4 \label{H2E:swallow1} \\
\n{m^{\mu_0}-Em^{0}}_{X^{1/2-s}(\Omega)}&\leq C\frac{1}{|\lambda_{01,h}^{\Omega}|^{s/2}}, \ 0<s\leq3/2 \label{H2E:swallow2},
\end{align}
where $C$ does not depend on $\mu$. Moreover $m^{\mu},m^{0}\geq0$.
\end{lem}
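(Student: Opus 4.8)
The idea is to reduce \eqref{H2E:swallow0}, \eqref{H2E:swallow1}, \eqref{H2E:swallow2} to a single uniform-in-$h$ bound for the operator $v\mapsto R(b_1,A_h)Tr'v$ together with the ironing estimate \eqref{H2E:resestiron}, and to handle \eqref{H2E:regstat0}, \eqref{H2E:regstat} and positivity separately. First I would prove that for every $s\in(0,3/4]$ there is $C=C(s)$, \emph{independent of $h\in(0,1]$}, with
\[
\n{R(b_1,A_h)Tr'v}_{X^{1/2-s}(\Omega)}\le C\n{v}_{X^{-1/4-s}(I)},\qquad v\in X^{-1/4-s}(I).
\]
For $0<s\le1/2$ this is a composition of bounded maps with $h$-independent norms: $Tr'\in\mathcal L\big(X^{-1/4-s}(I),X^{-1/2-s}(\Omega)\big)$ (transpose of Lemma \ref{H2E:tracelem}; note $-1/2-s\ge-1$) followed by $R(b_1,A_h)\in\mathcal L\big(X^{-1/2-s}(\Omega),X^{1/2-s}(\Omega)\big)$ (Lemma \ref{H2E:sembasicest}, index gap $1$). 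For $1/2<s\le3/4$ the intermediate index $-1/2-s$ drops below the scale, so I would instead expand directly: for $v\in X_{fin}(I)$,
\[
R(b_1,A_h)Tr'v=\sum_{i,j}\frac{c_{2j}\,(v|u_i)_{X(I)}}{b_1-\lambda^{\Omega}_{ij,h}}\,w_{ij},
\]
whence, by orthogonality of $\{w_{ij}\}$ in $X^{1/2-s}(\Omega)$, $c_{2j}\le\sqrt2$ and $b_1-\lambda^{\Omega}_{ij,h}=b_1+(i\pi/2)^2+(j\pi/h)^2$,
\[
\n{R(b_1,A_h)Tr'v}_{X^{1/2-s}(\Omega)}^2\le\Big(\sup_{i\ge0}(1-\lambda^I_i)^{1/2+2s}\sum_{j\ge0}\frac{2\,(1-\lambda^{\Omega}_{ij})^{1-2s}}{(b_1-\lambda^{\Omega}_{ij,h})^{2}}\Big)\n{v}_{X^{-1/4-s}(I)}^2 .
\]
Splitting the inner sum at $j\asymp h\,(1-\lambda^I_i)^{1/2}$, using $h\le1$ and the elementary tails $\sum_{j\ge N}j^{-4}\asymp N^{-3}$, $\sum_{j\ge N}j^{-2-4s}\asymp N^{-1-4s}$, one checks the supremum is finite and $h$-independent precisely for $s\le3/4$. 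This computation, close in spirit to the one in the proof of Lemma \ref{H2E:lemiron}, is the technical heart, and keeping all constants independent of $h$ is the main obstacle.

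Granting this, \eqref{H2E:swallow0}--\eqref{H2E:swallow1} follow at once: $\eta^\epsilon,\delta$ are probability measures, so $\n{\eta^\epsilon}_{X^{-1/4-s}(I)}+\n{\eta^\epsilon-\delta}_{X^{-1/4-s}(I)}\le C(s)$ by the embedding $\mathcal M(\overline I)\subset X^{-1/4-s}(I)$ (used in Lemma \ref{H2E:deltacon}); since $m^\mu=R(b_1,A_h)Tr'(p_1\eta^\epsilon)$ and $m^\mu-m^{\mu_0}=R(b_1,A_h)Tr'\big(p_1(\eta^\epsilon-\delta)\big)$, the bound above yields \eqref{H2E:swallow0} and \eqref{H2E:swallow1} for $0<s\le3/4$, while for $3/4<s\le3/2$, \eqref{H2E:swallow0} holds because $1/2-s\le-1/4$, hence $\n{\cdot}_{X^{1/2-s}(\Omega)}\le\n{\cdot}_{X^{-1/4}(\Omega)}$. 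For \eqref{H2E:regstat}: when $\epsilon>0$, $m^\mu\in W^1_2(\Omega)$ by the remark following \eqref{H2E:auxdef}; when $\epsilon=0$, $Tr'\delta$ is the Dirac mass $\delta_{(0,0)}$ at the flat boundary point $(0,0)\in\partial_1\Omega$, so $m^{\mu_0}=p_1R(b_1,A_h)\delta_{(0,0)}$ is a constant times the Green function of $b_1-\mathrm{div}\,J_h$ with pole at $(0,0)$, which is $C^\infty$ on $\overline\Omega\setminus\{(0,0)\}$ with a logarithmic singularity at $(0,0)$, hence in $W^1_p(\Omega)$ for every $p<2$.

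For \eqref{H2E:swallow2} I would exploit the $(I-EP)$-structure. By \eqref{H2E:Iden0}, $PTr'\delta=\delta=PE\delta$, so $(I-EP)(Tr'\delta-E\delta)=Tr'\delta-E\delta$; combining with $Em^0=ER(b_1,A_{0})(p_1\delta)=R(b_1,A_h)E(p_1\delta)$ (identity \eqref{H2E:Iden1}) gives $m^{\mu_0}-Em^0=R(b_1,A_h)(I-EP)\big(p_1(Tr'\delta-E\delta)\big)$. Since $\delta\in\mathcal M(\overline I)$ we have $Tr'\delta\in X^{\rho}(\Omega)$ for all $\rho\in[-1,-1/2)$ and $E\delta\in X^{-1/4-\tau}(\Omega)$ for $\tau>0$, so $\n{Tr'\delta-E\delta}_{X^{s_{ir}}(\Omega)}\le C$ ($h$-free) for $s_{ir}:=-1/2-s/2$ if $0<s\le1$ and $s_{ir}:=1/2-s$ if $1<s\le3/2$; in both cases $s_{ir}\ge-1$, $1/2-s\ge-1$ and $0\le(1/2-s)-s_{ir}\le1$, so \eqref{H2E:resestiron} applies with $s'=1/2-s$ and lower index $s_{ir}$. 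Using $\lambda^{\Omega}_{01,h}=-\pi^2/h^2$, so $b_1-\lambda^{\Omega}_{01,h}=b_1+\pi^2/h^2\le(b_1+\pi^2)h^{-2}$ and $(b_1-\lambda^{\Omega}_{01,h})^{-1}\le\pi^{-2}h^{2}$, the prefactor in \eqref{H2E:resestiron} is $\le C\,h^{s}$, which together with $|\lambda^{\Omega}_{01,h}|^{-s/2}=\pi^{-s}h^{s}$ gives \eqref{H2E:swallow2} (one uses $h^2\le h^s$ since $s\le2$).

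It remains to prove \eqref{H2E:regstat0} and positivity. As $m^0$ is the $W^1_2(I)$ weak solution of \eqref{H2E:bvp0} (and $W^1_2(I)\subset C(\overline I)$), it satisfies $b_1m^0=d\,\partial^2_{x_1}m^0$ on $I\setminus\{0\}$; by uniqueness, the symmetry of \eqref{H2E:bvp0} and the jump relation $d\big(\partial_{x_1}m^0(0^-)-\partial_{x_1}m^0(0^+)\big)=p_1$, one computes the explicit even solution $m^0(x_1)=\dfrac{p_1}{2\sqrt{b_1d}\,\sinh\sqrt{b_1/d}}\cosh\!\big(\sqrt{b_1/d}\,(1-|x_1|)\big)$, which lies in $W^1_\infty(I)$ and is $\ge0$ since $p_1\ge0$. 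For $\epsilon>0$, $m^\mu\in W^1_2(\Omega)$ is the weak solution of \eqref{H2E:bvp}; testing the weak formulation with $(m^\mu)_-$ and using $p_1\eta^\epsilon\ge0$ forces $\n{(m^\mu)_-}_{X(\Omega)}=0$, i.e. $m^\mu\ge0$ (exactly the $u_1$-step in the proof of Theorem \ref{H2E:existencereg}). Finally, \eqref{H2E:swallow1} with $s=1/4$ and Lemma \ref{H2E:deltacon} give $m^\mu\to m^{\mu_0}$ in $X^{1/4}(\Omega)\subset L_2(\Omega)$ as $\epsilon\to0^+$, so $m^{\mu_0}\ge0$ a.e. as an a.e. limit of nonnegative functions.
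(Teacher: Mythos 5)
Your proposal is correct and follows the same overall strategy as the paper: \eqref{H2E:swallow0} and \eqref{H2E:swallow1} come from factoring $m^{\mu}$ through $R(b_1,A_h)\circ Tr'$ with $h$-independent operator norms, \eqref{H2E:swallow2} from the identity $m^{\mu_0}-Em^0=p_1R(b_1,A_h)(I-EP)Tr'\delta$ (your version with $Tr'\delta-E\delta$ is the same element, since $(I-EP)E\delta=0$) combined with the ironing estimate \eqref{H2E:resestiron}, and positivity from a maximum-principle argument for $\epsilon>0$ followed by passage to the limit via \eqref{H2E:swallow1} and \eqref{H2E:swallow2}. Where you genuinely diverge: (i) the paper simply writes the composition $\n{R(b_1,A_h)}_{\mathcal L(X^{-1/2-s},X^{1/2-s})}\n{Tr'}_{\mathcal L(X^{-1/4-s},X^{-1/2-s})}$ for the whole stated range of $s$, whereas you notice that the intermediate space $X^{-1/2-s}(\Omega)$ leaves the duality scale for $s>1/2$ and patch this with a direct spectral computation for $1/2<s\le3/4$ and with monotonicity of the $X^{\sigma}$-norms for larger $s$; this is more careful than the paper and the splitting of the inner sum at $j\asymp h(1+i)$ does close the gap uniformly in $h$; (ii) for \eqref{H2E:regstat0} you compute the explicit $\cosh$ profile of $m^0$ (which also gives $m^0\ge0$ for free), while the paper subtracts $\frac{p_1}{2d}|x_1|$ and invokes elliptic regularity for the remainder; (iii) for \eqref{H2E:regstat} the paper cites [Mal4, Lemma 1], while you sketch a Green's-function/logarithmic-singularity argument — this is the one place where your write-up is only a sketch, but it is no less complete than the paper's citation. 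No gaps that would invalidate the argument.
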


\begin{proof}
To prove \eqref{H2E:regstat0} define $u(x_1)=m^0(x_1)+\frac{p_1}{2d}|x_1|$. Then using \eqref{H2E:bvp0} we obtain that $b_1u-d\partial^2_{x_1}u=\frac{b_1p_1}{2d}|x_1|$ for $x_1\in I$. We conclude that $u\in C^2(\ov{I})$ from where \eqref{H2E:regstat0} follows. The claim \eqref{H2E:regstat} is a consequence of [\cite{Mal4}, Lemma 1]. \\
Using \eqref{H2E:resest},  \eqref{H2E:traceest}, \eqref{H2E:deltaconv} we estimate
\begin{align*}
\n{m^{\mu}}_{X^{1/2-s}(\Omega)}&\leq p_1\n{R(b_1,A_h)}_{\mathcal{L}(X^{-1/2-s}(\Omega),X^{1/2-s}(\Omega))}\n{Tr'}_{\mathcal{L}(X^{-1/4-s}(I),X^{-1/2-s}(\Omega))}\n{\eta^{\epsilon}}_{X^{-1/4-s}(I)}\leq C,
\end{align*}
from which \eqref{H2E:swallow0} follows.
To prove \eqref{H2E:swallow1} we proceed in a similar manner:
\begin{align*}
\n{m^{\mu}-m^{\mu_0}}_{X^{1/2-s}(\Omega)}&\leq p_1\n{R(b_1,A_h)}_{\mathcal{L}(X^{-1/2-s}(\Omega),X^{1/2-s}(\Omega))}\n{Tr'}_{\mathcal{L}(X^{-1/4-s}(I),X^{-1/2-s}(\Omega))}\n{\eta^{\epsilon}-\delta}_{X^{-1/4-s}(I)}.
\end{align*}
Using \eqref{H2E:Iden0} we get that $\delta=PTr'\delta$ hence using \eqref{H2E:Iden1} and \eqref{H2E:resestiron} we obtain: 
\begin{align*}
&\n{m^{\mu_0}-Em^{0}}_{X^{1/2-s}(\Omega)}=p_1\n{R(b_1,A_h)Tr'\delta-ER(b_1,A_{0})PTr'\delta}_{X^{1/2-s}(\Omega)}\\
&=p_1\n{R(b_1,A_h)(I-EP)Tr'\delta)}_{X^{1/2-s}(\Omega)}\leq C\Big(\frac{1}{(b_1-\lambda_{01,h}^{\Omega})^{s/2}}+\frac{1}{b_1-\lambda_{01,h}^{\Omega}}\Big)\n{(I-EP)Tr'\delta}_{X^{-1/2-s/2}(\Omega)}.
\end{align*}
Moreover using \eqref{H2E:traceest}, \eqref{H2E:deltaconv} we have 
\begin{align*}
\n{(I-EP)Tr'\delta}_{X^{-1/2-s/2}(\Omega)}\leq\n{I-EP}_{\mathcal{L}(X^{-1/2-s/2}(\Omega))}\n{Tr'}_{\mathcal{L}(X^{-1/4-s/2}(I),X^{-1/2-s/2}(\Omega))}\n{\delta}_{X^{-1/4-s/2}(I)}\leq C.
\end{align*}
Finally to finish the proof of \eqref{H2E:swallow2} observe that
\begin{align*}
\frac{1}{(b_1-\lambda_{01,h}^{\Omega})^{s/2}}+\frac{1}{b_1-\lambda_{01,h}^{\Omega}}\leq C\frac{1}{|\lambda_{01,h}^{\Omega}|^{s/2}}.
\end{align*}
Using maximum principle for elliptic boundary value problem \eqref{H2E:bvp} we get that $m^{\mu}\geq0$ for $\epsilon>0$. Then \eqref{H2E:swallow1} implies that $m^{\mu_0}\geq0$ while $m^0\geq0$ follows from \eqref{H2E:swallow2}.
\end{proof}

Recall that $\mu=(h,\epsilon)\in(0,1]\times[0,1]$. Substituting $\delta$ by $\eta^{\epsilon}$ in \eqref{H2E:System}  we get 
\bs\label{H2E:SystemApu}
\eq{
\partial_t u_{1}^{\mu}+div(J_h(u_{1}^{\mu}))+b_1u_{1}^{\mu}&=0,&& (t,x)\in\Omega_{T}\label{H2E:SystemApuA}\\
\partial_t u_{2}^{\mu}-d\partial^2_{x_1} u_{2}^{\mu}&=f_2(\boldsymbol{u}^{\mu}),&&(t,x)\in(\partial_1\Omega)_{T}\label{H2E:SystemApuB}\\
\partial_t u_{3}^{\mu}&=f_3(\boldsymbol{u}^{\mu}),&&(t,x)\in(\partial_1\Omega)_{T}\label{H2E:SystemApuC}\\
\partial_t u_{4}^{\mu}&=f_4(\boldsymbol{u}^{\mu}),&&(t,x)\in(\partial_1\Omega)_{T}\label{H2E:SystemApuD}\\
\partial_t u_{5}^{\mu}&=f_5(\boldsymbol{u}^{\mu}),&&(t,x)\in(\partial_1\Omega)_{T}\label{H2E:SystemApuE}
}
\es
with boundary and initial conditions
\begin{align}
-J_h(u_{1}^{\mu})\nu&=0,&&(t,x)\in(\partial_0\Omega)_{T}\nonumber\\
-J_h(u_{1}^{\mu})\nu&=f_1(\boldsymbol{u}^{\mu})+p_1\eta^{\epsilon},&&(t,x)\in(\partial_1\Omega)_{T}\label{H2E:SystemApuBCD}\\
\partial_{x_1} u_{2}^{\mu}&=0,&&(t,x)\in(\partial\partial_1\Omega)_{T}\nonumber\\
\boldsymbol{u}^{\mu}(0,\cdot)&=\boldsymbol{u}_{0}\nonumber,
\end{align}
where
\begin{align*}
\bsym{u}^{\mu}=(u_{1}^{\mu},u_{2}^{\mu},u_{3}^{\mu},u_{4}^{\mu},u_{5}^{\mu}).
\end{align*}

\subsection{Definition of M-mild solution}\label{H2E:secM-mild}

Using Theorem \ref{H2E:existencereg} we obtain that for $\epsilon\in(0,1]$ system \eqref{H2E:SystemApu} has a unique globally defined $\mathcal{X}^{\alpha}$ solution. Unfortunately due to regularity issues the notion of $\mathcal{X}^{\alpha}$ solution is insufficient for the case $\epsilon=0$. Any potential solution $u_{1}^{\mu_0}$ has to be unbounded function of $x$ for any positive time which causes problems in the ODE part of the system. This motivates us to generalize the notion of solution. We rewrite our problem in the new variables so that system \eqref{H2E:SystemApu} with singular source term is transformed into system \eqref{H2E:SystemApz} with regular sources and low regularity initial data.  

Observe that  putting 
\begin{align}
\bsym{z}^{\mu}=(z_{1}^{\mu},z_{2}^{\mu},z_{3}^{\mu},z_{4}^{\mu},z_{5}^{\mu})=M(u_{1}^{\mu}-m^{\mu},u_{2}^{\mu},u_{3}^{\mu},u_{4}^{\mu},u_{5}^{\mu}),\label{H2E:zmitrans} \\
\bsym{z}_{0}^{\mu}=(z_{01}^{\mu},z_{02},z_{03},z_{04},z_{05})=M(u_{01}-m^{\mu},u_{02},u_{03},u_{04},u_{05})\label{H2E:zmi0trans},
\end{align}
where $m^{\mu}$ was defined in \eqref{H2E:auxdef} and $M$ denotes the following matrix
\begin{align*}
M=\begin{bmatrix}1,0,0,0,0\\0,1,0,0,0\\0,0,1,0,0\\0,0,1,1,0\\0,0,1,1,1\end{bmatrix},
\end{align*}
system \eqref{H2E:SystemApu} can be rewritten as

\bs\label{H2E:SystemApz}
\eq{
\partial_t z_{1}^{\mu}+div(J_h(z_{1}^{\mu}))+b_1z_{1}^{\mu}&=0,&& (t,x)\in\Omega_{T}\label{H2E:SystemApzA}\\
\partial_t z_{2}^{\mu}-d\partial^2_{x_1} z_{2}^{\mu}&=g_{2}^{\mu}(\bsym{z}^{\mu}),&&(t,x_1)\in(\partial_1\Omega)_{T}\label{H2E:SystemApzB}\\
\partial_t z_{3}^{\mu}+Tr(m^{\mu})z_{3}^{\mu}&=g_3(\boldsymbol{z}^{\mu}),&&(t,x_1)\in(\partial_1\Omega)_{T}\label{H2E:SystemApzC}\\
\partial_t z_{4}^{\mu}&=g_4(\boldsymbol{z}^{\mu}),&&(t,x_1)\in(\partial_1\Omega)_{T}\label{H2E:SystemApzD}\\
\partial_t z_{5}^{\mu}&=g_5(\boldsymbol{z}^{\mu}),&&(t,x_1)\in(\partial_1\Omega)_{T}\label{H2E:SystemApzE}
}
\es
with boundary and initial conditions
\begin{align*}
-J_h(z_{1}^{\mu})\nu&=0,&&(t,x)\in(\partial_0\Omega)_{T}\\
-J_h(z_{1}^{\mu})\nu&=g_{1}^{\mu}(\bsym{z}^{\mu}),&&(t,x_1)\in(\partial_1\Omega)_{T}\\
\partial_{x_1} z_{2}^{\mu}&=0,&&(t,x_1)\in(\partial\partial_1\Omega)_{T}\\
\boldsymbol{z}^{\mu}(0,\cdot)&=\boldsymbol{z}_{0}^{\mu},
\end{align*}
where 
\begin{align*}
g_{1}^{\mu},g_{2}^{\mu}&:I\times\mathbb{R}^5\to\mathbb{R},\ g_{3},g_{4},g_{5}:\mathbb{R}^5\to\mathbb{R},\\
g_{1}^{\mu}(\boldsymbol{z})&=-c_1z_1+c_2z_2-z_1z_3+c_4(z_4-z_3)-(c_1+z_3)Tr(m^{\mu}),\\
g_{2}^{\mu}(\boldsymbol{z})&=-b_2z_2+c_1z_1-c_2z_2-c_3z_2z_3+c_5(z_5-z_4)+c_1Tr(m^{\mu}),\\
g_{3}(\boldsymbol{z})&=-b_3z_3-z_1z_3-c_3z_2z_3+c_4(z_4-z_3)+c_5(z_5-z_4)+p_3,\\
g_{4}(\boldsymbol{z})&=-b_3z_3-b_4(z_4-z_3)-c_3z_2z_3+c_5(z_5-z_4)+p_3,\\
g_{5}(\boldsymbol{z})&=-b_3z_3-b_4(z_4-z_3)-b_5(z_5-z_4)+p_3.
\end{align*}

Assume that:
\bs\label{H2E:asssing}
\eq{
&d,\bsym{b}>0, \ \bsym{c}, \bsym{p}\geq0, \label{H2E:asssing1}\\
&2<p<\infty, \ 0<\theta<\min\Big\{\frac{1}{16},\frac{1}{2p}\Big\},\label{H2E:asssing2}\\
&0\leq \bsym{u}_0=(u_{01},\ldots, u_{05})\in X^{1/2+\theta}(\Omega)\times X^{1/2}(I)\times\{L_{\infty}(I)\}^3\label{H2E:asssing3}.
}
\es

Define Banach spaces
\begin{align*}
\bsym{Z}_{-}&=Z_{1-}\times Z_{2-}\times Z_{3-}\times Z_{4-}\times Z_{5-}=X^{-1/4-\theta}(\Omega)\times X(I)\times L_p(I)\times L_p(I)\times L_p(I),\\
\bsym{Z}&=Z_1\times Z_2\times Z_3\times Z_4\times Z_5=X^{1/2-\theta}(\Omega)\times X^{1/2}(I)\times L_p(I)\times L_p(I)\times L_p(I),\\
\bsym{Z}_{+}&=Z_{1+}\times Z_{2+}\times Z_{3+}\times Z_{4+}\times Z_{5+}=X^{1/2+\theta}(\Omega)\times X^{1/2}(I)\times L_{\infty}(I)\times L_{\infty}(I)\times L_{\infty}(I).
\end{align*}
For $\bsym{z}\in \bsym{Z_{+}}$ put
\begin{align*}
G_{1}^{\mu}(\bsym{z})&=Tr'(g_{1}^{\mu}(Tr(z_1),z_2,z_3,z_4,z_5)), \\
G_{2}^{\mu}(\bsym{z})&=g_{2}^{\mu}(Tr(z_1),z_2,z_3,z_4,z_5), \\
G_{i}(\bsym{z})&=g_i(Tr(z_1),z_2,z_3,z_4,z_5), \ i\in\{3,4,5\}.
\end{align*}

\begin{defi}
Fix $\mu=(h,\epsilon)\in(0,1]\times[0,1]$ and let $\bsym{z}^{\mu}, \bsym{z}_{0}^{\mu}$ be related with $\bsym{u}^{\mu}, \bsym{u}_{0}$  by equations \eqref{H2E:zmitrans} and \eqref{H2E:zmi0trans}. We define $\bsym{u}^{\mu}$ as a \textbf{M-mild} solution of system \eqref{H2E:SystemApu} on $[0,T)$ if the following three conditions are satisfied
\begin{enumerate} 
 \item Assumptions \eqref{H2E:asssing} hold. 
\item Function $\bsym{z}^{\mu}$ has the following regularity
\bs\label{H2E:deficond}
\eq{
z_{1}^{\mu}&\in C([0,T),Z_1), \ t^{2\theta}z_{1}^{\mu}\in L_{\infty}(0,T';Z_{1+}) \ {\rm for} \ T'<T,\label{H2E:deficond1}\\
z_{2}^{\mu}&\in C([0,T),Z_2),\\
z_{3}^{\mu}&\in C([0,T),Z_3)\cap L_{\infty}(0,T;Z_{3+}),\\
z_{i}^{\mu}&\in C([0,T),Z_{i+}), \ i\in\{4,5\}.
}
\es
\item For every $t\in[0,T)$ the following Duhamel formulas hold
\bs\label{H2E:Duhamelz}
\eq{
z_{1}^{\mu}(t)&=e^{t(A_h-b_1)}z_{01}^{\mu}+\int_0^te^{(t-\tau)(A_h-b_1)}G_{1}^{\mu}(\bsym{z}^{\mu}(\tau))d\tau,\\
z_{2}^{\mu}(t)&=e^{tdA_{0}}z_{02}+\int_0^te^{(t-\tau)dA_{0}}G_{2}^{\mu}(\bsym{z}^{\mu}(\tau))d\tau,\\
z_{3}^{\mu}(t)&=e^{-tTr(m^{\mu})}z_{03}+\int_0^te^{-(t-\tau)Tr(m^{\mu})}G_3(\bsym{z}^{\mu}(\tau))d\tau,\\
z_{i}^{\mu}(t)&=z_{0i}+\int_0^t G_i(\bsym{z}^{\mu}(\tau))d\tau, \ i\in\{4,5\}.
}
\es
\end{enumerate} 
\end{defi}

Concerning regularity of M-mild solutions we have the following\\

\begin{rem}
If $\bsym{u}^{\mu}$ is a M-mild solution of system \eqref{H2E:SystemApu} then 
\bs\label{H2E:M-mildreg}
\eq{
&u_1^{\mu}\in C([0,T),W^{1-2\theta}_2(\Omega)),  \ t^{2\theta}u_{1}^{\mu}\in L_{\infty}(0,T';W^1_p(\Omega)) \ {\rm for} \ 1\leq p<2,\\
&u_2^{\mu}\in C([0,T),W^1_2(I))\label{H2E:M-mildreg2},\\
&u_i\in C([0,T),L_p(I))\cap L_{\infty}(0,T';L_{\infty}(I)) \ {\rm for} \ i\in\{3,4,5\}\label{H2E:M-mildreg3}.
}
\es
\end{rem}

\begin{proof}
Using Lemma \ref{H2E:characterisation} we obtain that $Z_1=W^{1-2\theta}_2(\Omega), Z_{1+}=W^{1+2\theta}_2(\Omega), Z_2=W^1_2(I)$. Using Lemma \ref{H2E:swallow} $m^{\mu}\in W^1_p(\Omega)\cap W^{1-2\theta}_2(\Omega)$ for $1\leq p<2$. Thus using \eqref{H2E:zmitrans} and \eqref{H2E:deficond} we obtain that 
\begin{align*}
&u_1^{\mu}=z_1^{\mu}+m^{\mu}\subset C([0,T),W^{1-2\theta}_2(\Omega)),\\
&t^{2\theta}u^{\mu}_1=t^{2\theta}z^{\mu}_1+t^{2\theta}m^{\mu}\subset L_{\infty}(0,T';W^1_2(\Omega))+L_{\infty}(0,T';W^1_p(\Omega))\subset L_{\infty}(0,T';W^1_p(\Omega)).
\end{align*}
Similarly one shows \eqref{H2E:M-mildreg2} and \eqref{H2E:M-mildreg3}.
\end{proof}

\subsection{The main results}\label{H2E:secMain}

We first prove that for $\epsilon>0$ system \eqref{H2E:SystemApu} has a unique M-mild solution and study its convergence as $\epsilon\to 0$. 
\\
\begin{theo}\label{H2E:Maintheorem1}
Assume \eqref{H2E:asssing}. Then
\begin{enumerate}
\item For every $\mu=(h,\epsilon)\in(0,1]\times(0,1], \ 0<T\leq\infty$
system \eqref{H2E:SystemApu} has a unique M-mild solution $\bsym{u}^{\mu}$ defined on $[0,T)$. This solution is nonnegative and is also $\mathcal{X}^{\alpha}$ solution.
\item For every $h\in(0,1], \ \epsilon=0, \ 0<T\leq\infty$ system \eqref{H2E:SystemApu} has a unique M-mild solution $\bsym{u}^{\mu_0}$ defined on $[0,T)$. The solution is nonnegative. Moreover if $T=\infty$ then for every $0<T'<\infty$ the following convergence holds
\begin{align}
\lim_{\epsilon\to0^+}\Big\{\sum_{i=1}^5\n{u_{i}^{\mu}-u_{i}^{\mu_0}}_{L_{\infty}(0,T';Z_i)}\Big\}=0.\label{H2E:limeps}
\end{align}
\end{enumerate} 
\end{theo}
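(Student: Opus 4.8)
Part 1 is essentially a translation: for $\epsilon>0$ the rescaled source $p_1\eta^{\epsilon}$ belongs to $L_{\infty}(I)$, so \eqref{H2E:SystemApu} is exactly the regular-source system of Theorem \ref{H2E:existencereg} with $\omega=p_1\eta^{\epsilon}$, which supplies a unique, global, nonnegative $\mathcal{X}^{\alpha}$ solution $\bsym{u}^{\mu}$ together with the bound \eqref{H2E:linftybound}. Setting $\bsym{z}^{\mu}$ by \eqref{H2E:zmitrans}, I would note that in the Duhamel formula for $u_1^{\mu}$ (Lemma \ref{H2E:exlem}) the source splits as $Tr'(f_1(\dots))+p_1Tr'\eta^{\epsilon}$; since $(b_1-A_h)m^{\mu}=p_1Tr'\eta^{\epsilon}$ by \eqref{H2E:auxdef}, the semigroup identity $\int_0^te^{(t-\tau)(A_h-b_1)}(b_1-A_h)m^{\mu}\,d\tau=m^{\mu}-e^{t(A_h-b_1)}m^{\mu}$ removes the singular contribution and leaves precisely the first line of \eqref{H2E:Duhamelz}; substituting $u_1=z_1+m^{\mu}$, $u_2=z_2$, $u_3=z_3$, $u_4=z_4-z_3$, $u_5=z_5-z_4$ into $f_1,\dots,f_5$ and using the combination encoded in $M$ turns the remaining equations into the rest of \eqref{H2E:Duhamelz} (the $-Tr(m^{\mu})z_3$ term in \eqref{H2E:SystemApzC} being just the $m^{\mu}$-part of $-u_1u_3$). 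The regularity \eqref{H2E:deficond} then follows from the $\mathcal{X}^{\alpha}$-regularity of $\bsym{u}^{\mu}$, from \eqref{H2E:swallow0} and \eqref{H2E:regstat}, and from the smoothing estimate \eqref{H2E:semest}. So $\bsym{u}^{\mu}$ is M-mild; uniqueness, nonnegativity and the $\mathcal{X}^{\alpha}$ property are then inherited from Theorem \ref{H2E:existencereg} (uniqueness also follows from the contraction argument of Part 2 applied to \eqref{H2E:Duhamelz}).

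\textbf{Part 2, local existence and uniqueness.} For $\epsilon=0$ there is no regular-source system to invoke, so I would construct the solution directly from the integral system \eqref{H2E:Duhamelz}, by a contraction mapping argument carried out uniformly in $\mu\in(0,1]\times[0,1]$. The inputs are: (i) by Lemma \ref{H2E:swallow}, $0\le m^{\mu}$ with $\|m^{\mu}\|_{X^{1/2-s}(\Omega)}\le C$ uniformly in $\mu$, whence by Lemma \ref{H2E:tracelem} and Lemma \ref{H2E:imbeddings} $Tr(m^{\mu})$ lies in a fixed $L_q(I)$-ball for any $q$ we wish (so in particular $Tr(m^{\mu})\in L_p(I)$) and $M_{-Tr(m^{\mu})}$ generates the contraction semigroup $e^{-tTr(m^{\mu})}$ with the estimates of Lemma \ref{H2E:Multi}; (ii) $z_{01}^{\mu}=u_{01}-m^{\mu}\in Z_1=X^{1/2-\theta}(\Omega)$ by \eqref{H2E:asssing3} and \eqref{H2E:swallow0}; (iii) $Tr'\in\mathcal{L}(X^{-\theta}(I),X^{-1/4-\theta}(\Omega))$ by Lemma \ref{H2E:tracelem}, while $Tr(Z_{1+})=Tr(X^{1/2+\theta}(\Omega))\subset X^{1/4-\theta}(I)\subset L_p(I)$ (using $\theta<1/(2p)$ from \eqref{H2E:asssing2}), so $G_1^{\mu}$ maps $\bsym{Z}_{+}$ into $Z_{1-}=X^{-1/4-\theta}(\Omega)$ and $G_2^{\mu},\dots,G_5$ map $\bsym{Z}_{+}$ into the respective spaces, with quadratic Lipschitz bounds on bounded sets; (iv) by \eqref{H2E:semest}, $e^{t(A_h-b_1)}$ maps $Z_{1-}$ into $Z_{1+}$ with norm $\lesssim 1+t^{-3/4-2\theta}$, which is integrable because $\theta<1/16$. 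With these, the right-hand side of \eqref{H2E:Duhamelz} is a contraction on the space given by \eqref{H2E:deficond} over $[0,T_1]$ with $T_1$ depending only on $\|\bsym{z}_0^{\mu}\|$ and the uniform bound on $m^{\mu}$ (hence bounded below uniformly in $\mu$), yielding a unique local M-mild solution $\bsym{u}^{\mu}$ for every $\mu$, with the usual continuation alternative.

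\textbf{Part 2, globality and nonnegativity.} For $\epsilon>0$ we already have $\bsym{u}^{\mu}\ge0$; for $\epsilon=0$, nonnegativity of $\bsym{u}^{\mu_0}$ on $[0,T_1]$ follows by passing to the limit in the convergence estimate below (the $Z_i$ embed into $L_p$ or $C(\overline{I})$, $C(\overline{\Omega})$, where nonnegativity survives limits), and then on the whole existence interval by continuation. Given $\bsym{u}^{\mu_0}\ge0$, adding the equations for $z_3^{\mu_0},z_4^{\mu_0},z_5^{\mu_0}$ — equivalently for $u_3^{\mu_0},u_4^{\mu_0},u_5^{\mu_0}$ — and repeating Step 3 of the proof of Theorem \ref{H2E:existencereg} gives $\|z_3^{\mu_0}(t)\|_{\infty}+\|z_4^{\mu_0}(t)\|_{\infty}+\|z_5^{\mu_0}(t)\|_{\infty}\le C$; feeding this into the Duhamel formulas for $z_1^{\mu_0},z_2^{\mu_0}$ and using \eqref{H2E:semest}, Lemma \ref{H2E:tracelem}, Lemma \ref{H2E:imbeddings} together with the Gronwall inequality of Lemma \ref{H2E:Gronlem} gives an a priori bound on $\|z_1^{\mu_0}(t)\|_{Z_1}+\|z_2^{\mu_0}(t)\|_{Z_2}$ over bounded time intervals, ruling out blow-up. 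Hence the M-mild solution is global, i.e. $T=\infty$ is admissible (and the solution on $[0,T)$ for finite $T$ is its restriction).

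\textbf{Part 2, convergence — the main obstacle.} Fix $T'<\infty$. Subtract the Duhamel formulas \eqref{H2E:Duhamelz} for $\bsym{z}^{\mu}$ and $\bsym{z}^{\mu_0}$ and estimate in the $Z_i$-norms, weighting the $z_1$-component by $\tau^{2\theta}$. Three families of terms appear: (a) data terms, $e^{t(A_h-b_1)}(z_{01}^{\mu}-z_{01}^{\mu_0})=-e^{t(A_h-b_1)}(m^{\mu}-m^{\mu_0})$, controlled via \eqref{H2E:swallow1} and $\|\eta^{\epsilon}-\delta\|_{X^{-1/4-s}(I)}\to0$ (Lemma \ref{H2E:deltacon}), and $(e^{-tTr(m^{\mu})}-e^{-tTr(m^{\mu_0})})z_{03}$, controlled via \eqref{H2E:Multi2} and $\|Tr(m^{\mu}-m^{\mu_0})\|_{L_p(I)}\lesssim\|\eta^{\epsilon}-\delta\|_{X^{-1/4-s}(I)}\to0$ (Lemma \ref{H2E:tracelem} and \eqref{H2E:swallow1}); (b) the coefficient differences inside the nonlinearities, $g_i^{\mu}-g_i^{\mu_0}$, which involve only $Tr(m^{\mu}-m^{\mu_0})$ and the semigroup difference above, hence are $o(1)$; (c) the Lipschitz differences $G_i^{\mu}(\bsym{z}^{\mu})-G_i^{\mu}(\bsym{z}^{\mu_0})$, controlled on the bounded set where both solutions live (the a priori bounds above, uniform in $\mu$). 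Collecting, with $\phi(t)$ a suitably weighted sum of $\|z_i^{\mu}(t)-z_i^{\mu_0}(t)\|_{Z_i}$, one arrives at an inequality of the form
\[
\phi(t)\le C\,\omega(\epsilon)+C\,b\int_0^t\frac{\phi(\tau)}{\tau^{2\theta}(t-\tau)^{3/4+2\theta}}\,d\tau,\qquad t\in[0,T'],
\]
with $\omega(\epsilon)\to0$; since $2\theta+(3/4+2\theta)<1$ by \eqref{H2E:asssing2}, Lemma \ref{H2E:Gronlem} yields $\|\phi\|_{L_{\infty}(0,T')}\le C(T')\,\omega(\epsilon)\to0$, i.e. \eqref{H2E:limeps} in the $\bsym{z}$-variables, and transferring back through \eqref{H2E:zmitrans} (using \eqref{H2E:swallow1} once more for the $m^{\mu}-m^{\mu_0}$ piece) gives \eqref{H2E:limeps}. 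The hard part is precisely this last step: tracking the negative-order and fractional-order norms simultaneously, absorbing the $\tau^{-2\theta}$ singularity of $Tr(z_1^{\mu})$ near $\tau=0$ into an integrable kernel, and checking that all exponents stay strictly below $1$ so that Lemma \ref{H2E:Gronlem} applies — which is exactly what the smallness of $\theta$ in \eqref{H2E:asssing2} is for.
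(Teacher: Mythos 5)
Your proposal follows essentially the same route as the paper: Part 1 via Theorem \ref{H2E:existencereg} with $\omega=p_1\eta^{\epsilon}$ and the change of variables \eqref{H2E:zmitrans}; Part 2 via a contraction mapping argument for \eqref{H2E:Duhamelz} that is uniform in $\mu$, followed by a singular Gronwall comparison of $\bsym{z}^{\mu}$ and $\bsym{z}^{\mu_0}$ using Lemmas \ref{H2E:swallow}, \ref{H2E:deltacon}, \ref{H2E:Multi} and \ref{H2E:Gronlem}, with the exponent bookkeeping $2\theta+(3/4+2\theta)<1$ exactly as in the paper. The only structural deviations are harmless: you derive the global-in-time bounds for $\epsilon=0$ directly from the ODE components, whereas the paper transfers the $\mu$-uniform bounds $M_1(T),M_2(T),M_3$ from the $\epsilon>0$ solutions through the convergence estimate; both work.

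One point is stated too quickly: uniqueness. The theorem asserts uniqueness in the full class of M-mild solutions, and the contraction mapping principle only gives uniqueness of the fixed point inside the ball $\bsym{\mathcal{Z}}$ of radius $R$ on $[0,T_1]$. An arbitrary M-mild solution need not lie in that ball, and you cannot force it in by shrinking $T_1$, because the definition \eqref{H2E:deficond1} only requires $t^{2\theta}z_1^{\mu}\in L_{\infty}(0,T';Z_{1+})$ without any smallness as $T'\to0^+$. The paper therefore runs a separate argument (its Step 2): given two M-mild solutions with the same data, the Lipschitz estimates on the $G_i$ and Lemma \ref{H2E:sembasicest} yield
\begin{align*}
f(t)\leq C\bigl(1+\overline{K}(T')\bigr)\bigl(1+(T')^{3/4+2\theta}\bigr)\int_0^t\frac{f(\tau)}{\tau^{2\theta}(t-\tau)^{3/4}}\,d\tau
\end{align*}
for $f(t)=\sum_{i=1}^5\n{z_i(t)-z_i'(t)}_{Z_i}$, and Lemma \ref{H2E:Gronlem} with $a=0$ forces $f\equiv0$. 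You have all the ingredients for this (it is your convergence estimate with $\omega(\epsilon)$ replaced by $0$), but it needs to be carried out explicitly rather than attributed to the contraction principle.
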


Next we consider the dimension reduction problem. We show that for $\epsilon=0$ the solution of system \eqref{H2E:SystemApu} converges to the solution of an appropriate one dimensional problem when $h\to 0$.\\

\begin{theo}\label{H2E:Maintheorem2}
Let $\bsym{u}^{\mu_0}$ be the unique, global in time M-mild solution of system \eqref{H2E:SystemApu} for $h\in(0,1]$ and $\epsilon=0$. Then for every $0<T<\infty$
\begin{align}
\lim_{h\to0^+}\Big\{\n{t^{2\theta}(z_{1}^{\mu_0}-z_{1}^{0})}_{L_{\infty}(0,T;Z_{1+})}+\sum_{i=2}^5\n{u_{i}^{\mu_0}-u_{i}^{0}}_{L_{\infty}(0,T;Z_i)}\Big\}=0,\label{H2E:limk}
\end{align} 
where $z_{1}^{\mu_0}=u_{1}^{\mu_0}-m^{\mu_0},\ z_1^{0}=E(u_1^{0}-m^{0})$  and $\bsym{u}^{0}=(u_1^{0},\ldots,u_5^{0})$ is the unique classical solution of
\bs\label{H2E:SystemApulim}
\eq{
\partial_t u_{1}-\partial^2_{x_1}u_1+b_1u_1&=f_{1}(\bsym{u})+p_1\delta && (t,x_1)\in I_{\infty}\label{H2E:SystemApuAlim}\\
\partial_t u_{2}-d\partial^2_{x_1} u_{2}&=f_{2}(\bsym{u}),&&(t,x_1)\in I_{\infty}\label{H2E:SystemApuBlim}\\
\partial_t u_{3}&=f_3(\boldsymbol{u}),&&(t,x_1)\in I_{\infty}\label{H2E:SystemApuClim}\\
\partial_t u_{4}&=f_4(\boldsymbol{u}),&&(t,x_1)\in I_{\infty}\label{H2E:SystemApuDlim}\\
\partial_t u_{5}&=f_5(\boldsymbol{u}),&&(t,x_1)\in I_{\infty}\label{H2E:SystemApuElim}
}
\es
with boundary and initial conditions
\begin{align*}
\partial_{x_1}u_1&=\partial_{x_1}u_2=0,&& (t,x_1)\in(\partial I)_T\\
\bsym{u}(0,.)&=\bsym{u}_{0}^0=[Pu_{01},u_{02},u_{03},u_{04},u_{05}].
\end{align*}
\end{theo}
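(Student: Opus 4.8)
The plan is to pull the one‑dimensional limit problem \eqref{H2E:SystemApulim} back to the variables of Section~\ref{H2E:secM-mild} and then close a singular Gronwall estimate. By \cite{Mal2} the system \eqref{H2E:SystemApulim} has a unique global classical solution $\bsym{u}^0$; set $\zeta_1^0=u_1^0-m^0$ and $\bsym{z}^0=M(\zeta_1^0,u_2^0,u_3^0,u_4^0,u_5^0)$, so that $z_1^0=E\zeta_1^0$. Subtracting $m^0$ cancels the Dirac Delta in the $u_1$‑equation, so $\zeta_1^0$ solves a parabolic problem with bounded right‑hand side and initial value $\zeta_{01}^0=Pu_{01}-m^0\in X^{1/2+\theta}(I)$ (indeed $Pu_{01}\in X^{1/2+\theta}(I)$ by \eqref{H2E:EPreg}, and $m^0\in X^{1/2+\theta}(I)$ by the representation $m^0=u-(\text{const})|x_1|$, $u\in C^2(\ov{I})$, from the proof of \eqref{H2E:regstat0}, together with $\theta<1/16<1/4$ and Lemma~\ref{H2E:characterisation}). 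Hence $\bsym{z}^0$ satisfies a one‑dimensional Duhamel system of the shape of \eqref{H2E:Duhamelz}: $\zeta_1^0(t)=e^{t(A_0-b_1)}\zeta_{01}^0+\int_0^te^{(t-\tau)(A_0-b_1)}g_1^0(\bsym{z}^0(\tau))\,d\tau$, where $g_1^0$ is the nonlinearity $g_1^{\mu}$ of Section~\ref{H2E:secM-mild} with $Tr(m^{\mu})$ replaced by $m^0$, and analogously for $z_2^0$ (with $dA_0$), $z_3^0$ (with $e^{-tm^0}$), $z_4^0,z_5^0$ (pure integrals); its regularity is that inherited from \cite{Mal2} (equivalently, obtained by running Lemma~\ref{H2E:exlem} for the transformed, smooth‑data, one‑dimensional system).

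The comparison is organized by the vertical splitting of the first component. Applying $P$ to the first line of \eqref{H2E:Duhamelz} and using the $L_2$‑transpose $Pe^{tA_h}=e^{tA_0}P$ of \eqref{H2E:Iden2} together with $PTr'=I$ from \eqref{H2E:Iden0}, one gets that $Pz_1^{\mu_0}$ obeys the one‑dimensional $\zeta_1$‑Duhamel equation with datum $Pz_{01}^{\mu_0}$ and nonlinearity evaluated at $(Tr(z_1^{\mu_0}),z_2^{\mu_0},\dots,z_5^{\mu_0})$ with $Tr(m^{\mu_0})$ instead of $m^0$. Since $(I-EP)z_1^0=0$, $TrE=PE=I$, and $E$ is an isometry of $X^s(I)$ into $X^s(\Omega)$, we have $z_1^{\mu_0}-z_1^0=E(Pz_1^{\mu_0}-\zeta_1^0)+(I-EP)z_1^{\mu_0}$ and $Tr(z_1^{\mu_0})-\zeta_1^0=(Pz_1^{\mu_0}-\zeta_1^0)+Tr\big((I-EP)z_1^{\mu_0}\big)$, so everything is controlled by $\n{Pz_1^{\mu_0}-\zeta_1^0}$, the analogous differences for $z_2,\dots,z_5$, the vertical fluctuation $(I-EP)z_1^{\mu_0}$, and the fixed $h$‑dependent data $Pz_{01}^{\mu_0}-\zeta_{01}^0=-P(m^{\mu_0}-Em^0)$ and $Tr(m^{\mu_0})-m^0=Tr(m^{\mu_0}-Em^0)$, the last two tending to $0$ by \eqref{H2E:swallow2}, \eqref{H2E:traceest} and Lemma~\ref{H2E:imbeddings}.

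The vertical fluctuation is killed by Lemma~\ref{H2E:lemiron} alone. Writing $(I-EP)z_{01}^{\mu_0}=(I-EP)u_{01}-(I-EP)(m^{\mu_0}-Em^0)$: on $(I-EP)u_{01}\in X^{1/2+\theta}(\Omega)$ the operator $t^{2\theta}e^{t(A_h-b_1)}$ has $\mathcal{L}(X^{1/2+\theta}(\Omega))$‑norm at most $Ct^{2\theta}e^{t\lambda^{\Omega}_{01,h}}\le C|\lambda^{\Omega}_{01,h}|^{-2\theta}\to0$ by \eqref{H2E:ineq1} and $\lambda^{\Omega}_{01,h}=-(\pi/h)^2$; on $(I-EP)(m^{\mu_0}-Em^0)$, which is $O(|\lambda^{\Omega}_{01,h}|^{-\theta/2})$ in $X^{1/2-\theta}(\Omega)$ by \eqref{H2E:swallow2}, the smoothing gain $1+t^{-2\theta}$ of Lemma~\ref{H2E:lemiron} combined with the weight $t^{2\theta}$ still produces a bound $\to0$; and the Duhamel contribution $\int_0^te^{(t-\tau)(A_h-b_1)}(I-EP)G_1^{\mu_0}(\bsym{z}^{\mu_0}(\tau))\,d\tau$, with $(I-EP)G_1^{\mu_0}(\bsym{z}^{\mu_0})$ bounded in $X^{-1/4-\theta}(\Omega)$ uniformly in $h$ (by \eqref{H2E:swallow0}, the trace and embedding lemmas, and the $h$‑uniform a priori bounds from the proof of Theorem~\ref{H2E:Maintheorem1}), is estimated by \eqref{H2E:ineq3} and vanishes like a positive power of $|\lambda^{\Omega}_{01,h}|^{-1}$; all occurring exponents — in particular $\tfrac34+2\theta<1$ — are admissible thanks to \eqref{H2E:asssing2}.

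Finally I subtract the one‑dimensional Duhamel formulas (projected for the first component, plain for the others) from the two‑dimensional ones for all five unknowns and estimate the differences using: the semigroup bounds \eqref{H2E:semest}; Lemma~\ref{H2E:Multi} for $e^{-tTr(m^{\mu_0})}-e^{-tm^0}$ (applicable since $m^{\mu_0},m^0\ge0$ by Lemma~\ref{H2E:swallow}); the trace bound $\n{Tr\,w}_{C(\ov{I})}\le C\n{w}_{X^{1/2+\theta}(\Omega)}$ and Lemma~\ref{H2E:imbeddings}; and local Lipschitz continuity of the polynomial nonlinearities $g_i$ on the bounded set, uniform in $h$, in which the solutions stay. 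Setting $\psi(t)=t^{2\theta}\n{z_1^{\mu_0}(t)-z_1^0(t)}_{X^{1/2+\theta}(\Omega)}+\sum_{i=2}^5\n{z_i^{\mu_0}(t)-z_i^0(t)}_{Z_i}$ and using $\n{z_1^{\mu_0}(\tau)-z_1^0(\tau)}_{X^{1/2+\theta}(\Omega)}\le\tau^{-2\theta}\psi(\tau)$ inside the nonlinearities, the above collapses to $\psi(t)\le a(h)+C\int_0^t\tau^{-\alpha}(t-\tau)^{-\beta}\psi(\tau)\,d\tau$ on $[0,T]$, with $a(h)\to0$, $\alpha,\beta\ge0$, $\alpha+\beta<1$ (the extra prefactor $t^{2\theta}$ absorbed via $(t-\tau+\tau)^{2\theta}\le(t-\tau)^{2\theta}+\tau^{2\theta}$). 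Lemma~\ref{H2E:Gronlem} then yields $\psi(t)\le Ca(h)\exp\big(CT^{1+\alpha+\beta}\big)$, so $\sup_{0<t\le T}\psi(t)\to0$; since the bracket in \eqref{H2E:limk} is dominated by $\sup_{0<t\le T}\psi(t)$ (the $u_i^{\mu_0}-u_i^0$, $i\ge2$, being fixed linear combinations of the $z_j^{\mu_0}-z_j^0$ via $M^{-1}$), this finishes the proof. I expect the main obstacle to be the exponent bookkeeping that keeps every kernel — obtained by composing the smoothing of $e^{tdA_0}$, $e^{t(A_h-b_1)}$ and $Tr$ with the $t^{2\theta}$‑weight — of the integrable form $\tau^{-\alpha}(t-\tau)^{-\beta}$ with $\alpha+\beta<1$ (this is precisely where \eqref{H2E:asssing2} is used), together with the point that $(I-EP)z_1^{\mu_0}$ must \emph{vanish}, not merely stay bounded, as $t\to0^+$, which is what forces the splitting of the initial datum above.
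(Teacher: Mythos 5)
Your proposal is correct and follows essentially the same route as the paper: embed the one-dimensional Duhamel system via $E$, use the commutation identities \eqref{H2E:Identities}, kill the vertical part with Lemma \ref{H2E:lemiron} together with \eqref{H2E:ineq1}, \eqref{H2E:ineq3} and \eqref{H2E:swallow2}, compare the multiplication semigroups via Lemma \ref{H2E:Multi}, and close with the singular Gronwall Lemma \ref{H2E:Gronlem} under the exponent condition \eqref{H2E:asssing2}. Your explicit splitting $z_1^{\mu_0}-z_1^0=E(Pz_1^{\mu_0}-\zeta_1^0)+(I-EP)z_1^{\mu_0}$ is only a cosmetic reorganization of the paper's direct subtraction of the two Duhamel formulas.
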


\begin{rem}
Global well-posedness of system \eqref{H2E:SystemApulim} was established in \cite{Mal2}.
\end{rem}

\subsection{Proof of Theorem \ref{H2E:Maintheorem1}}\label{H2E:ProofMain1}

\textbf{Step 1 - estimates for $G_i$'s.}
\\
\
\begin{lem}\label{H2E:Gest}
For $\bsym{z},\bsym{z}'\in \bsym{Z}_+,\ \mu\in\mathbb{Z}_+\times[0,1]$ the following estimates hold
\begin{align*}
&\sum_{i=1}^2\n{G_{i}^{\mu}(\bsym{z})}_{Z_{i-}}+\sum_{i=3}^5\n{G_i(\bsym{z})}_{Z_{i+}}\leq C\Big((1+\sum_{i=1}^2\n{z_i}_{Z_{i+}})(1+\n{z_3}_{Z_{3+}})+\sum_{i=4}^5\n{z_i}_{Z_{i+}}\Big),\\
&\sum_{i=1}^2\n{G_{i}^{\mu}(\bsym{z})-G_{i}^{\mu}(\bsym{z}')}_{Z_{i-}}+\sum_{i=3}^5\n{G_i(\bsym{z})-G_i(\bsym{z}')}_{Z_i}\leq C\Big((1+\n{z_3}_{Z_{3+}}+\n{z_3'}_{Z_{3+}})\sum_{i=1}^2\n{z_i-z_i'}_{Z_i}\\
&+(1+\sum_{i=1}^2(\n{z_i}_{Z_{i+}}+\n{z_i'}_{Z_{i+}}))\n{z_3-z_3'}_{Z_3}+\sum_{i=4}^5\n{z_i-z_i'}_{Z_i}\Big),\\
&\sum_{i=1}^2\n{G_{i}^{\mu}(\bsym{z})-G_{i}^{\mu}(\bsym{z}')}_{Z_{i-}}+\sum_{i=3}^5\n{G_i(\bsym{z})-G_i(\bsym{z}')}_{Z_{i+}}\leq  C\Big((1+\n{z_3}_{Z_{3+}}+\n{z_3'}_{Z_{3+}})\sum_{i=1}^2\n{z_i-z_i'}_{Z_{i+}}\\
&+(1+\sum_{i=1}^2(\n{z_i}_{Z_{i+}}+\n{z_i'}_{Z_{i+}}))\n{z_3-z_3'}_{Z_{3+}}+\sum_{i=4}^5\n{z_i-z_i'}_{Z_{i+}}\Big),\nonumber\\
&\sum_{i=1}^2\n{G_{i}^{\mu}(\bsym{z})-G_{i}^{\mu_0}(\bsym{z})}_{Z_{i-}}\leq C(1+\n{z_3}_{Z_{3+}})\n{\eta^{\epsilon}-\delta}_{X^{-1/4-\theta}(I)},
\end{align*}
where $C$ does not depend on $\mu$.
\end{lem}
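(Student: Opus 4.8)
The plan is to exploit that every $g_i^{\mu}$ and every $g_i$ is a polynomial of degree at most $2$ in its five scalar arguments, whose sole $\mu$-dependence is the coefficient function $Tr(m^{\mu})$, which enters $g_1^{\mu}$ and $g_2^{\mu}$ linearly. Thus all four estimates reduce, after substituting $Tr(z_1)$ for the first argument and applying $Tr'$ to the output of $g_1^{\mu}$, to two routine devices: Hölder's inequality for the quadratic monomials, and the splitting $\phi\psi-\phi'\psi'=(\phi-\phi')\psi+\phi'(\psi-\psi')$ for the Lipschitz bounds. First I would assemble the mapping properties needed.

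From \eqref{H2E:asssing2} we have $2<p<\infty$ and $0<\theta<\min\{1/16,1/(2p)\}$; combining Lemma \ref{H2E:tracelem}, Lemma \ref{H2E:imbeddings} and Lemma \ref{H2E:reiteration} this yields: (i) $Tr\in\mathcal{L}(Z_{1+},X^{1/4+\theta}(I))$ with $X^{1/4+\theta}(I)\subset\subset C(\overline{I})\subset L_\infty(I)$, and $Z_{2+}=X^{1/2}(I)\subset\subset C(\overline{I})$; (ii) since $\theta<1/4$ we get $Tr\in\mathcal{L}(Z_1,X^{1/4-\theta}(I))$, and since $\theta<1/(2p)$ we have $X^{1/4-\theta}(I)\subset L_p(I)$, so $Tr\in\mathcal{L}(Z_1,L_p(I))$; (iii) dualising $Tr\in\mathcal{L}(X^{1/4+\theta}(\Omega),X^{\theta}(I))$ and precomposing with $L_p(I)\subset L_2(I)=X^0(I)\subset X^{-\theta}(I)$ gives $Tr'\in\mathcal{L}(L_p(I),Z_{1-})$. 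It remains to control $Tr(m^{\mu})$ uniformly in $\mu$: fixing any finite $q$ and choosing $s<\min\{1/4,1/(2q)\}$, estimate \eqref{H2E:swallow0} together with the trace of (i)-type and the embedding $X^{1/4-s}(I)\subset L_q(I)$ gives $\sup_{\mu}\n{Tr(m^{\mu})}_{L_q(I)}<\infty$; likewise \eqref{H2E:swallow1} with $s=\theta$ gives $\n{Tr(m^{\mu}-m^{\mu_0})}_{L_p(I)}\le C\n{\eta^{\epsilon}-\delta}_{X^{-1/4-\theta}(I)}$.

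With this toolbox the four estimates follow by bookkeeping. For the boundedness estimate, expand $g_1^{\mu},g_2^{\mu}$ and bound each monomial in $L_p(I)$, using $Tr(z_1),z_2,z_3,z_4,z_5\in L_\infty(I)$ and the uniform bound on $\n{Tr(m^{\mu})}_{L_p(I)}$, then apply $Tr'\in\mathcal{L}(L_p(I),Z_{1-})$ to the resulting function and note $L_p(I)\subset L_2(I)=Z_{2-}$; for $G_3,G_4,G_5$ every monomial (including $Tr(z_1)z_3$ and $z_2z_3$) lies in $L_\infty(I)=Z_{i+}$ with norm bounded by products of $Z_{\cdot+}$-norms, and $z_4,z_5$ enter only linearly, which produces exactly the bilinear form on the right. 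For the two Lipschitz estimates apply $\phi\psi-\phi'\psi'=(\phi-\phi')\psi+\phi'(\psi-\psi')$ to each quadratic monomial and to $(c_1+z_3)Tr(m^{\mu})$; the factor $z_i$ or $z_i'$ multiplying an increment goes in $L_\infty$ and the increment $z_i-z_i'$ in its own norm, while $(z_3-z_3')Tr(m^{\mu})$ is handled by Hölder into $L_2(I)$ using $Tr(m^{\mu})\in L_{2p/(p-2)}(I)$ uniformly — the second and third estimates differ only in whether the increments are measured in $Z_i$ or $Z_{i+}$, the coefficient bookkeeping being identical. For the last estimate, $g_i^{\mu}(\bsym{z})-g_i^{\mu_0}(\bsym{z})$ equals $-(c_1+z_3)(Tr(m^{\mu})-Tr(m^{\mu_0}))$ for $i=1$ and $c_1(Tr(m^{\mu})-Tr(m^{\mu_0}))$ for $i=2$, so the bound is immediate from $\n{c_1+z_3}_{L_\infty}\le C(1+\n{z_3}_{Z_{3+}})$, the uniform estimate on $\n{Tr(m^{\mu}-m^{\mu_0})}_{L_p(I)}$, and $Tr'\in\mathcal{L}(L_p(I),Z_{1-})$.

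The only genuinely non-mechanical point — and the one to be careful about — is the uniform-in-$\mu$ control of $Tr(m^{\mu})$ and $Tr(m^{\mu}-m^{\mu_0})$: a direct trace of $m^{\mu}\in W^1_p(\Omega)$, $p<2$, is not uniformly integrable enough, so one must route through the $X^s$-scale bounds of Lemma \ref{H2E:swallow}, which are uniform in $\mu$, spending the slack in $\theta<1/(2p)$ precisely to place $X^{1/4-\theta}(I)$ inside $L_p(I)$. Everything else is Hölder plus the bilinear splitting; the remaining care is only in checking that the indices $1/4\pm\theta$ and $1/2\pm\theta$ land on the correct side of the thresholds $1/4$ (for $Tr$ and the $C(\overline{I})$-embedding) and the $L_p$-exponent $2/(1-4\cdot)$, which is exactly what $0<\theta<\min\{1/16,1/(2p)\}$ guarantees.
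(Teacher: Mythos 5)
Your proposal is correct and follows essentially the same route as the paper: the paper likewise reduces everything to the at-most-quadratic structure of the $g_i$'s, the trace mapping $Tr\in\mathcal{L}(Z_1,L_p(I))$ obtained from Lemma \ref{H2E:tracelem} and the embedding $X^{1/4-\theta}(I)\subset L_p(I)$, H\"older's inequality with $Tr(m^{\mu})\in L_{2p/(p-2)}(I)$ uniformly in $\mu$, and the bilinear splitting for the Lipschitz bounds. The only cosmetic differences are that the paper measures $g_1^{\mu}$ in $L_2(I)$ before applying $Tr'$ (you use $L_p(I)$, which is equivalent for the purpose) and that you spell out, via \eqref{H2E:swallow0}--\eqref{H2E:swallow1}, the uniform-in-$\mu$ control of $Tr(m^{\mu})$ that the paper uses implicitly.
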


\begin{proof}
We will prove inequalities involving $G_{1}^{\mu}$ and $G_3$. Inequalities involving $G_{2}^{\mu},G_4$ and $G_5$ can be derived analogously. Using condition \eqref{H2E:asssing2} and Lemma \ref{H2E:imbeddings} we get that $X^{1/4-\theta}(I)\subset L_p(I)$, from which $Tr\in\mathcal{L}(Z_1,L_p(I))$ by Lemma \ref{H2E:tracelem}. Using above observation and H\"older inequality we estimate
\begin{align*}
&\n{G_{1}^{\mu}(\bsym{z})}_{Z_{1-}}=\n{Tr'g_{1}^{\mu}(Tr(z_1),z_2,z_3,z_4,z_5)}_{X^{-1/4-\theta}(\Omega)}\leq C\n{g_{1}^{\mu}(Tr(z_1),z_2,z_3,z_4,z_5)}_{L_2(I)}\\
&\leq C\Big(\n{Tr(z_1)}_{L_2(I)}+\n{z_2}_{L_{2}(I)}+\n{Tr(z_1)}_{L_2(I)}\n{z_3}_{\infty}+\n{z_4}_{L_2(I)}+\n{z_3}_{L_{2}(I)}\\
&+(1+\n{z_3}_{\infty})\n{Tr(m^{\mu})}_{L_2(I)}\Big)\leq C\Big((1+\sum_{i=1}^2\n{z_i}_{Z_{i+}})(1+\n{z_3}_{Z_{3+}})+\sum_{i=4}^5\n{z_i}_{Z_{i+}}\Big),\\
&\n{G_3(\bsym{z})}_{Z_{3+}}\leq C\Big(\n{z_3}_{\infty}+\n{Tr(z_1)}_{\infty}\n{z_3}_{\infty}+\n{z_2}_{\infty}\n{z_3}_{\infty}+\sum_{i=3}^5\n{z_i}_{\infty}+1\Big)\\
&\leq C\Big((1+\sum_{i=1}^2\n{z_i}_{Z_{i+}})(1+\n{z_3}_{Z_{3+}})+\sum_{i=4}^5\n{z_i}_{Z_{i+}}\Big),\\
&\n{G_{1}^{\mu}(\bsym{z})-G_{1}^{\mu}(\bsym{z}')}_{Z_{1-}}\leq C\n{g_{1}^{\mu}(Tr(z_1),z_2,z_3,z_4,z_5)-g_{1}^{\mu}(Tr(z_1'),z_2',z_3',z_4',z_5')}_{L_2(I)}\\
&\leq C\Big(\n{Tr(z_1-z_1')}_{L_2(I)}+\n{z_2-z_2'}_{L_2(I)}+\n{Tr(z_1-z_1')}_{L_2(I)}\n{z_3}_{\infty}+\n{z_3-z_3'}_{L_{2}(I)}\n{Tr(z_1')}_{\infty}\\
&+\n{z_4-z_4'}_{L_2(I)}+\n{z_3-z_3'}_{L_2(I)}+\n{Tr(m^{\mu})}_{L_{\frac{2p}{p-2}}(I)}\n{z_3-z_3'}_{L_{p}(I)}\Big)\leq C\Big((1+\n{z_3}_{Z_{3+}}\\
&+\n{z_3'}_{Z_{3+}})\sum_{i=1}^2\n{z_i-z_i'}_{Z_i}+(1+\sum_{i=1}^2(\n{z_i}_{Z_{i+}}+\n{z_i'}_{Z_{i+}}))\n{z_3-z_3'}_{Z_3}+\sum_{i=4}^5\n{z_i-z_i'}_{Z_i}\Big),\\
&\n{G_3(\bsym{z})-G_3(\bsym{z}')}_{Z_3}\leq C\Big(\n{z_3-z_3'}_{L_p(I)}+\n{Tr(z_1-z_1')}_{L_p(I)}\n{z_3}_{\infty}+\n{z_3-z_3'}_{L_p(I)}\n{Tr(z_1')}_{\infty}\\
&+\n{z_2-z_2'}_{L_p(I)}\n{z_3}_{\infty}+\n{z_3-z_3'}_{L_p(I)}\n{z_2'}_{\infty}+\sum_{i=4}^5\n{z_i-z_i'}_{L_p(I)}\Big)\leq C\Big((1+\n{z_3}_{Z_{3+}}\\
&+\n{z_3'}_{Z_{3+}})\sum_{i=1}^2\n{z_i-z_i'}_{Z_i}+(1+\sum_{i=1}^2(\n{z_i}_{Z_{i+}}+\n{z_i'}_{Z_{i+}}))\n{z_3-z_3'}_{Z_3}+\sum_{i=4}^5\n{z_i-z_i'}_{Z_i}\Big),\\
&\n{G_3(\bsym{z})-G_3(\bsym{z}')}_{Z_{3+}}\leq C\Big(\n{z_3-z_3'}_{\infty}+\n{Tr(z_1-z_1')}_{\infty}\n{z_3}_{\infty}+\n{z_3-z_3'}_{\infty}\n{Tr(z_1')}_{\infty}\\
&+\n{z_2-z_2'}_{\infty}\n{z_3}_{\infty}+\n{z_3-z_3'}_{\infty}\n{z_2'}_{\infty}+\sum_{i=4}^5\n{z_i-z_i'}_{\infty}\Big)\leq C\Big((1+\n{z_3}_{Z_{3+}}\\
&+\n{z_3'}_{Z_{3+}})\sum_{i=1}^2\n{z_i-z_i'}_{Z_{i+}}+(1+\sum_{i=1}^2(\n{z_i}_{Z_{i+}}+\n{z_i'}_{Z_{i+}}))\n{z_3-z_3'}_{Z_{3+}}+\sum_{i=4}^5\n{z_i-z_i'}_{Z_{i+}}\Big),\\
&\n{G_{1}^{\mu}(\bsym{z})-G_{1}^{\mu_0}(\bsym{z})}_{Z_{1-}}\leq C\n{(c_1+z_3)Tr(m^{\mu}-m^{\mu_0})}_{L_2(I)}\leq C(1+\n{z_3}_{\infty})\n{Tr(m^{\mu}-m^{\mu_0})}_{L_{2}(I)}\\
&\leq C(1+\n{z_3}_{Z_{3+}})\n{\eta^{\epsilon}-\delta}_{X^{-1/4-\theta}(I)}.
\end{align*}
\end{proof}

\textbf{Step 2 - uniqueness of M-mild solution to system \eqref{H2E:SystemApu}.}
\

Assume that $\bsym{u},\bsym{u}'$ are two M-mild solutions of system \eqref{H2E:SystemApu} on $[0,T), 0<T\leq\infty$, with the same initial condition. Let $\bsym{z},\bsym{z}'$ be related with  $\bsym{u},\bsym{u}'$ by \eqref{H2E:zmitrans}, \eqref{H2E:zmi0trans}. Fix $T'<T$.

For $t\in(0,T')$ denote $f(t)=\sum_{i=1}^5\n{z_{i}(t)-z_{i}'(t)}_{Z_i}$. 
Put
\begin{align*}
&K_1(T')=\n{t^{2\theta}z_{1}}_{L_{\infty}(0,T';Z_{1+})}+\n{t^{2\theta}z_{1}'}_{L_{\infty}(0,T';Z_{1+})}\\,
&K_{i}(T')=\n{z_{i}}_{L_{\infty}(0,T';Z_{i+})}+\n{z_{i}'}_{L_{\infty}(0,T';Z_{i+})}, \ i=2,3\\
&\overline{K}(T')=\max\{K_1(T'),K_2(T'),K_3(T')\}
\end{align*}
Using condition \eqref{H2E:deficond} we get that $f\in L_{\infty}(0,T')$ and $\overline{K}(T')<\infty$. Owing to Lemma \ref{H2E:Gest} we obtain that for $t\in(0,T')$ there is
\begin{align*}
&\sum_{i=1}^2\n{G_{i}^{\mu}(\bsym{z}(t))-G_{i}^{\mu}(\bsym{z}'(t))}_{Z_{i-}}+\sum_{i=3}^5\n{G_{i}(\bsym{z}(t))-G_{i}(\bsym{z}'(t))}_{Z_i}\leq C\Big\{\Big(1+\n{z_3(t)}_{Z_{3+}}\\
&+\n{z_3'(t)}_{Z_{3+}}\Big)\sum_{i=1}^2\n{z_{i}(t)-z'_{i}(t)}_{Z_i}+\Big(1+\sum_{i=1}^2(\n{z_i(t)}_{Z_{i+}}+\n{z_i'(t)}_{Z_{i+}})\Big)\n{z_3(t)-z_3'(t)}_{Z_3}\\
&+\sum_{i=4}^5\n{z_i(t)-z_i'(t)}_{Z_i}\Big\}\leq C\Big\{\Big(1+K_3(T')\Big)\sum_{i=1}^2\n{z_{i}(t)-z'_{i}(t)}_{Z_i}+\Big(1+\frac{1}{t^{2\theta}}K_1(T')\\
&+K_2(T')\Big)\n{z_3(t)-z_3'(t)}_{Z_3}+\sum_{i=4}^5\n{z_i(t)-z_i'(t)}_{Z_i}\Big\}\leq C(1+\overline{K}(T'))\Big(1+\frac{1}{t^{2\theta}}\Big)f(t).
\end{align*}
Using Lemma \ref{H2E:sembasicest} and owing to the fact that $\bsym{z},\bsym{z}'$ satisfy \eqref{H2E:Duhamelz} we obtain for $t\in(0,T')$
\begin{align*}
&f(t)\leq\int_0^t\Big\{\n{e^{(t-\tau)A_h}}_{\mathcal{L}(Z_{1-},Z_1)}\n{G_{1}^{\mu}(\bsym{z}(\tau))-G_{1}^{\mu}(\bsym{z}'(\tau))}_{Z_{1-}}\\
&+\n{e^{(t-\tau)dA_{0}}}_{\mathcal{L}(Z_{2-},Z_2)}\n{G_{2}^{\mu}(\bsym{z}(\tau))-G_{2}^{\mu}(\bsym{z}'(\tau))}_{Z_{2-}}+\n{e^{-(t-\tau)Tr(m^{\mu})}}_{Z_{3+}}\n{G_3(\bsym{z}(\tau))-G_3(\bsym{z}'(\tau))}_{Z_3}\\
&+\sum_{i=4}^5\n{G_i(\bsym{z}(\tau))-G_i(\bsym{z}'(\tau))}_{Z_i}\Big\}d\tau\leq C(1+\overline{K}(T'))\int_0^t\Big(1+\frac{1}{(t-\tau)^{3/4}}+\frac{1}{(t-\tau)^{1/2}}\Big)\Big(1+\frac{1}{\tau^{2\theta}}\Big)f(\tau)d\tau\\
&\leq C(1+\overline{K}(T'))(1+(T')^{3/4+2\theta})\int_0^t\frac{f(\tau)}{\tau^{2\theta}(t-\tau)^{3/4}}d\tau.
\end{align*}
Finally using  Lemma \eqref{H2E:Gronlem} (see \eqref{H2E:asssing2}) we conclude that $f\equiv 0$ on $(0,T')$ hence $\bsym{u}\equiv\bsym{u}'$.\\

\textbf{Step 3 - existence of global solutions for $\epsilon>0$ and $\mu$-independence of bounds.}

Using Theorem \ref{H2E:existencereg} with $s=1/2+\theta,\ s'=1/2+2\theta, \ \omega=\eta^{\epsilon}$ we obtain that system \eqref{H2E:SystemApu} has for $\mu\in(0,1]\times(0,1]$ a unique global $\mathcal{X}^\alpha$ solution $\bsym{u}^{\mu}$ which is nonnegative. Let $\bsym{z}^{\mu}, \bsym{z}_{0}^{\mu}$ be related with $\bsym{u}^{\mu}, \bsym{u}_{0}$ by \eqref{H2E:zmitrans} and \eqref{H2E:zmi0trans}. It is easy to see that $\bsym{z}^{\mu}$ satisfies formulas \eqref{H2E:Duhamelz} from which one concludes that  $\bsym{u}^{\mu}$ is also a M-mild solution of system \eqref{H2E:SystemApu}. Using estimate \eqref{H2E:linftybound} from Theorem \ref{H2E:existencereg} we get that
\begin{align}
M_3=\sup_{\mu\in(0,1]\times(0,1]}\sum_{i=3}^5\n{z_{i}^{\mu}}_{L_{\infty}(0,\infty;Z_{i+})}\label{H2E:M3}
\end{align}
is finite. Fix $T<\infty$ and for $0<t<T$ denote $g(t)=1+t^{2\theta}\n{z_{1}^{\mu}(t)}_{Z_{1+}}+\n{z_{2}^{\mu}(t)}_{Z_{2+}}$. Owing to Lemma \ref{H2E:Gest} we obtain
\begin{align*}
&\sum_{i=1}^2\n{G_{i}^{\mu}(\bsym{z}^{\mu}(t))}_{Z_{i-}}\leq C\Big((1+\n{z_{1}^{\mu}(t)}_{Z_{1+}}+\n{z_{2}^{\mu}(t)}_{Z_{2+}})(1+\n{z_{3}^{\mu}(t)}_{Z_{3+}})+\n{z_{4}^{\mu}(t)}_{Z_{4+}}+\n{z_{5}^{\mu}(t)}_{Z_{5+}}\Big)\\
&\leq C(1+M_3)(1+\n{z_{1}^{\mu}(t)}_{Z_{1+}}+\n{z_{2}^{\mu}(t)}_{Z_{2+}})\leq C(1+M_3)\Big(1+\frac{1}{t^{2\theta}}\Big)g(t).
\end{align*}

Using \eqref{H2E:Duhamelz} and Lemma \ref{H2E:sembasicest} we estimate (recall that $Z_2=Z_{2+}$)
\begin{align*}
g(t)&\leq 1+t^{2\theta}\n{e^{tA_h}}_{\mathcal{L}(Z_1,Z_{1+})}\n{z_{01}^{\mu}}_{Z_1}+\n{e^{tdA_{0}}}_{\mathcal{L}(Z_2)}\n{z_{02}}_{Z_2}\\
&+\int_0^t\Big(t^{2\theta}\n{e^{(t-\tau)A_h}}_{\mathcal{L}(Z_{1-},Z_{1+})}\n{G_{1}^{\mu}(\bsym{z}^{\mu}(\tau))}_{Z_{1-}}+\n{e^{(t-\tau)dA_{0}}}_{\mathcal{L}(Z_{2-},Z_2)}\n{G_{2}^{\mu}(\bsym{z}^{\mu}(\tau))}_{Z_{2-}}\Big)d\tau\\
&\leq C(1+t^{2\theta})\Big\{1+(1+M_3)\int_0^t\Big(1+\frac{1}{(t-\tau)^{3/4+2\theta}}+\frac{1}{(t-\tau)^{1/2}}\Big)\Big(1+\frac{1}{\tau^{2\theta}}\Big)g(\tau)d\tau\Big\}\\
&\leq C(1+T^{2\theta})\Big\{1+(1+M_3)(1+T^{3/4+4\theta})\int_0^t\frac{g(\tau)}{(t-\tau)^{3/4+2\theta}\tau^{2\theta}}d\tau\Big\}\\
&\leq C(1+T^{2\theta})+C(1+M_3)(1+T^{3/4+6\theta})\int_0^t\frac{g(\tau)}{(t-\tau)^{3/4+2\theta}\tau^{2\theta}}d\tau.
\end{align*}

Thus using Lemma \ref{H2E:Gronlem} (see \eqref{H2E:asssing2}) we get that for every $T>0$
\begin{align}
M_1(T)=\sup_{\mu\in(0,1]\times(0,1]}\n{t^{2\theta}z_{1}^{\mu}}_{L_{\infty}(0,T;Z_{1+})} \ {\rm and} \ M_2(T)=\sup_{\mu\in(0,1]\times(0,1]}\n{z_{2}^{\mu}}_{L_{\infty}(0,T;Z_{2+})}\label{H2E:M12}
\end{align}
are finite.

\textbf{Step 4 - existence of local M-mild solutions for $\epsilon=0$.}

To prove existence of local M-mild solutions we use the contraction mapping principle in appropriate weighted in time spaces. For $R, T>0$ define  
\begin{align*}
&\mathcal{Z}_1=\{z_1\in C([0,T],Z_1):\ \n{z_1}_{L_{\infty}(0,T;Z_1)}+\n{t^{2\theta}z_1}_{L_{\infty}(0,T;Z_{1+})}\leq R \},\\ &d_{\mathcal{Z}_1}(z_1,z_1')=\n{z_1-z_1'}_{L_{\infty}(0,T;Z_1)}+\n{t^{2\theta}(z_1-z_1')}_{L_{\infty}(0,T;Z_{1+})},\\
&\mathcal{Z}_2=\{z_2\in C([0,T],Z_2): \n{z_2}_{L_{\infty}(0,T;Z_2)}\leq R\}, \ d_{\mathcal{Z}_2}(z_2,z_2')=\n{z_2-z_2'}_{L_{\infty}(0,T;Z_2)},\\
&\mathcal{Z}_i=\{z_i\in C([0,T],Z_i): \n{z_i}_{L_{\infty}(0,T;Z_{i+})}\leq R\}, \ d_{\mathcal{Z}_i}(z_i,z_i')=\n{z_i-z_i'}_{L_{\infty}(0,T;Z_{i+})}, \ i=3,4,5\\
&\bsym{\mathcal{Z}}=\mathcal{Z}_1\times\ldots\times\mathcal{Z}_5, \ d_{\bsym{\mathcal{Z}}}(\bsym{z},\bsym{z}')=\sum_{i=1}^5d_{\mathcal{Z}_i}(z_i,z_i').
\end{align*}
Observe that $\mathcal{Z}_i$ and $\bsym{\mathcal{Z}}$ are complete metric spaces.

For $\bsym{z}\in\bsym{\mathcal{Z}}, \ \mu=(h,\epsilon)\in(0,1]\times[0,1]$ define
\begin{align*}
[\Phi_{1}^{\mu}(\bsym{z})](t)&=e^{t(A_h-b_1)}z_{01}^{\mu}+\int_0^te^{(t-\tau)(A_h-b_1)}G_{1}^{\mu}(\bsym{z}(\tau))d\tau,\\
[\Phi_{2}^{\mu}(\bsym{z})](t)&=e^{tdA_0}z_{02}+\int_0^te^{(t-\tau)dA_{0}}G_{2}^{\mu}(\bsym{z}(\tau))d\tau,\\
[\Phi_{3}^{\mu}(\bsym{z})](t)&=e^{-tTr(m^{\mu})}z_{03}+\int_0^te^{-(t-\tau)Tr(m^{\mu})}G_{3}(\bsym{z}(\tau))d\tau,\\
[\Phi_{i}(\bsym{z})](t)&=z_{0i}+\int_0^tG_i(\bsym{z}(\tau))d\tau, \ i=4,5\\
\bsym{\Phi}^{\mu}&=(\Phi_{1}^{\mu},\Phi_{2}^{\mu},\Phi_{3}^{\mu},\Phi_{4},\Phi_{5}).
\end{align*}

\begin{lem}\label{H2E:lemunifball}
There exist $R, T>0$ such that for every $\mu\in(0,1]\times[0,1]$ the map $\bsym{\Phi}^{\mu}$ maps $\bsym{\mathcal{Z}}$ into itself and satisfies for every $\bsym{z},\bsym{z}'\in\bsym{\mathcal{Z}}$ the following condition
\begin{align}
d_{\bsym{\mathcal{Z}}}(\bsym{\Phi}^{\mu}(\bsym{z}),\bsym{\Phi}^{\mu}(\bsym{z}'))\leq (1/2)d_{\bsym{\mathcal{Z}}}(\bsym{z},\bsym{z}')\label{H2E:contract}.
\end{align}
\end{lem}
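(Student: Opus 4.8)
The plan is a contraction-mapping argument carried out in the weighted space $\bsym{\mathcal{Z}}$, with the order of quantifiers chosen as follows: first fix $R$ so that the linear (initial-data) parts of the $\Phi_i^{\mu}$ stay inside the $R$-ball uniformly in $\mu$, and then shrink $T$ so that the Duhamel integral parts contribute both a term $\leq R/2$ to the ball estimate and the factor $1/2$ in \eqref{H2E:contract}. The three facts that keep every constant independent of $\mu=(h,\epsilon)$ are: the semigroup bounds of Lemma \ref{H2E:sembasicest}, which are uniform in $h$; the bound $\n{e^{-tTr(m^{\mu})}}_{\mathcal{L}(L_{\infty}(I))}\leq1$ from Lemma \ref{H2E:Multi}, applicable because $m^{\mu}\geq0$ (Lemma \ref{H2E:swallow}) so that $-Tr(m^{\mu})\leq0$; and the $\mu$-independent nonlinearity estimates of Lemma \ref{H2E:Gest}. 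The initial data are likewise controlled uniformly: $z_{0i}^{\mu}=u_{0i}$ for $i\geq2$ does not depend on $\mu$, while $z_{01}^{\mu}=u_{01}-m^{\mu}$ is bounded in $Z_1=X^{1/2-\theta}(\Omega)$ independently of $\mu$ by \eqref{H2E:swallow0} with $s=\theta$ together with $u_{01}\in X^{1/2+\theta}(\Omega)\subset X^{1/2-\theta}(\Omega)$ from \eqref{H2E:asssing3}. Write $C_0$ for a $\mu$-independent bound on all these initial-data contributions (measured in the $Z_i$, resp. weighted $Z_{1+}$, norms dictating $\bsym{\mathcal{Z}}$).

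For the self-mapping property, fix $\bsym{z}\in\bsym{\mathcal{Z}}$ and $t\in(0,T]$ with $T\leq1$. Each component of $\bsym{\Phi}^{\mu}(\bsym{z})$ splits into a semigroup-applied-to-initial-data term, bounded by $C_0$ (for the $z_1$-component one uses $\n{e^{tA_h}}_{\mathcal{L}(Z_1,Z_{1+})}\leq C(1+t^{-2\theta})$, a gap of $2\theta=(1/2+\theta)-(1/2-\theta)$, so the $t^{2\theta}$-weight gives a bounded contribution), and a Duhamel integral. For $\bsym{z}\in\bsym{\mathcal{Z}}$ one has $\n{z_1(\tau)}_{Z_{1+}}\leq R\tau^{-2\theta}$, so Lemma \ref{H2E:Gest} yields $\sum_{i=1}^2\n{G_i^{\mu}(\bsym{z}(\tau))}_{Z_{i-}}+\sum_{i=3}^5\n{G_i(\bsym{z}(\tau))}_{Z_{i+}}\leq C(R)(1+\tau^{-2\theta})$. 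Combining this with $\n{e^{tA_h}}_{\mathcal{L}(Z_{1-},Z_{1+})}\leq C(1+t^{-(3/4+2\theta)})$, $\n{e^{tdA_0}}_{\mathcal{L}(Z_{2-},Z_2)}\leq C(1+t^{-1/2})$, $\n{e^{-tTr(m^{\mu})}}_{\mathcal{L}(L_{\infty}(I))}\leq1$ and the Beta-type bound \eqref{H2E:ineq2}, the worst (the $z_1$) integral contribution is
\begin{align*}
t^{2\theta}\int_0^t C(1+(t-\tau)^{-(3/4+2\theta)})\,C(R)(1+\tau^{-2\theta})\,d\tau\leq C(R)\,t^{1/4-2\theta}\qquad(t\leq1),
\end{align*}
the positive exponent $1/4-2\theta$ being available because $\int_0^t(t-\tau)^{-(3/4+2\theta)}\tau^{-2\theta}d\tau\leq Ct^{1/4-4\theta}$ requires $3/4+4\theta<1$, i.e. $\theta<1/16$, which holds by \eqref{H2E:asssing2}; the $z_2,z_3,z_4,z_5$ integral contributions are treated identically and are powers of $T$ with positive exponents ($1/2-2\theta$, and $1,1-2\theta$). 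Hence taking $R=4C_0$ and then $T$ small enough that every integral contribution is $\leq R/4$ gives $\bsym{\Phi}^{\mu}(\bsym{\mathcal{Z}})\subset\bsym{\mathcal{Z}}$; the continuity $\Phi_i^{\mu}(\bsym{z})\in C([0,T],Z_i)$ follows from strong continuity of the semigroups and dominated convergence, the singularities $\tau^{-2\theta}$, $(t-\tau)^{-(3/4+2\theta)}$, etc. being integrable.

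For the contraction estimate I keep the same $R$ and $T$ and repeat the computation with the Lipschitz bound of Lemma \ref{H2E:Gest}: using $\n{z_1(\tau)-z_1'(\tau)}_{Z_{1+}}\leq\tau^{-2\theta}d_{\bsym{\mathcal{Z}}}(\bsym{z},\bsym{z}')$ one gets, for $\bsym{z},\bsym{z}'\in\bsym{\mathcal{Z}}$,
\begin{align*}
\sum_{i=1}^2\n{G_i^{\mu}(\bsym{z}(\tau))-G_i^{\mu}(\bsym{z}'(\tau))}_{Z_{i-}}+\sum_{i=3}^5\n{G_i(\bsym{z}(\tau))-G_i(\bsym{z}'(\tau))}_{Z_{i+}}\leq C(R)(1+\tau^{-2\theta})\,d_{\bsym{\mathcal{Z}}}(\bsym{z},\bsym{z}').
\end{align*}
Pushing this through the same semigroup bounds and \eqref{H2E:ineq2} gives $d_{\bsym{\mathcal{Z}}}(\bsym{\Phi}^{\mu}(\bsym{z}),\bsym{\Phi}^{\mu}(\bsym{z}'))\leq C(R)(T^{1/4-2\theta}+T^{1/2-2\theta}+T+T^{1-2\theta})\,d_{\bsym{\mathcal{Z}}}(\bsym{z},\bsym{z}')$, so after a further shrinking of $T$ (which does not destroy the self-mapping property) the prefactor is $\leq1/2$, which is \eqref{H2E:contract}.

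The main obstacle is not the mechanics of the fixed-point scheme but the requirement that every constant be independent of $\mu$, in particular as $\epsilon\to0$, when $m^{\mu_0}$ genuinely loses regularity, and as $h\to0$. This is precisely why the argument is built on the uniform-in-$h$ semigroup estimates (Lemma \ref{H2E:sembasicest}), the $\mu$-uniform bounds and sign of $m^{\mu}$ (Lemma \ref{H2E:swallow}), and the $\mu$-uniform nonlinearity estimates (Lemma \ref{H2E:Gest}). The one quantitatively delicate point is the exponent bookkeeping: the convolution $\int_0^t(t-\tau)^{-(3/4+2\theta)}\tau^{-2\theta}\,d\tau$ — coming from the $3/4+2\theta$ derivatives needed to map $Z_{1-}$ into $Z_{1+}$ together with the $\tau^{-2\theta}$ blow-up of $G_1^{\mu}(\bsym{z})$ at $\tau=0$ — produces a positive power of $t$ only when $\theta<1/16$, which is the condition tying together the exponents in the definition of an M-mild solution.
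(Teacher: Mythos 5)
Your proposal is correct and follows essentially the same route as the paper: a contraction argument in the weighted spaces $\bsym{\mathcal{Z}}$, with $R$ fixed by the ($\mu$-uniform) initial-data contributions, $T$ then shrunk using the bound $\int_0^t(t-\tau)^{-(3/4+2\theta)}\tau^{-2\theta}d\tau\leq Ct^{1/4-4\theta}$ (valid precisely because $\theta<1/16$), and all constants kept independent of $\mu$ via Lemmas \ref{H2E:sembasicest}, \ref{H2E:swallow}, \ref{H2E:Multi} and \ref{H2E:Gest}. The only cosmetic discrepancy is that $z_{0i}$ for $i=4,5$ equals a sum of the $u_{0j}$ via the matrix $M$ rather than $u_{0i}$ itself, but these are still $\mu$-independent, which is all your argument uses.
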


\begin{proof}
\
Fix $R\geq 1\geq T>0$. Using Lemma \ref{H2E:Gest}  we have for $t\in[0,T]$ and  $\bsym{z},\bsym{z}'\in\bsym{\mathcal{Z}}$
\begin{align}
&\sum_{i=1}^2\n{G_{i}^{\mu}(\bsym{z}(t))}_{Z_{i-}}+\sum_{i=3}^5\n{G_i(\bsym{z}(t))}_{Z_{i+}}\leq CR^2\Big(1+\frac{1}{t^{2\theta}}\Big)\label{H2E:blah1}\\
&\sum_{i=1}^2\n{G_{i}^{\mu}(\bsym{z}(t))-G_{i}^{\mu}(\bsym{z}'(t))}_{Z_{i-}}+\sum_{i=3}^5\n{G_i(\bsym{z}(t))-G_i(\bsym{z}'(t))}_{Z_{i+}}\leq CR\Big(1+\frac{1}{t^{2\theta}}\Big)d_{\bsym{\mathcal{Z}}}(\bsym{z},\bsym{z}')\label{H2E:blah2}. 
\end{align}
Using \eqref{H2E:blah1} and Lemma \ref{H2E:sembasicest} we estimate 
\begin{align*}
&t^{2\theta}\n{[\Phi_{1}^{\mu}(\bsym{z})](t)}_{Z_{1+}}+\n{[\Phi_{1}^{\mu}(\bsym{z})](t)}_{Z_1}+\sum_{i=2}^5\n{[\Phi_{i}^{\mu}(\bsym{z})](t)}_{Z_{i+}}\leq (t^{2\theta}\n{e^{t A_h}}_{\mathcal{L}(Z_1,Z_{1+})}+\n{e^{t A_h}}_{\mathcal{L}(Z_1)})\n{z_{01}^{\mu}}_{Z_1}\\
&+\n{e^{tdA_0}}_{\mathcal{L}(Z_2)}\n{z_{02}}_{Z_2}+\n{e^{-tTr(m^{\mu})}}_{Z_{3+}}\n{z_{03}}_{Z_{3+}}+\sum_{i=4}^5\n{z_{0i}}_{Z_{i+}}
+\int_0^t\Big\{\Big(t^{2\theta}\n{e^{(t-\tau) A_h}}_{\mathcal{L}(Z_{1-},Z_{1+})}\\
&+\n{e^{(t-\tau) A_h}}_{\mathcal{L}(Z_{1-},Z_1)}\Big)\n{G_{1}^{\mu}(\bsym{z}(\tau))}_{Z_{1-}}+\n{e^{(t-\tau)dA_0}}_{\mathcal{L}(Z_{2-},Z_2)}\n{G_{2}^{\mu}(\bsym{z}(\tau))}_{Z_{2-}}\\
&+\n{e^{-(t-\tau)Tr(m^{\mu})}}_{Z_{3+}}\n{G_3(\bsym{z}(\tau))}_{Z_{3+}}+\sum_{i=4}^5\n{G_i(\bsym{z}(\tau))}_{Z_{i+}}\Big\}d\tau\leq C(t^{2\theta}+1)\Big\{\n{z_{01}^{\mu}}_{Z_1}+\n{z_{02}}_{Z_2}\\
&+\sum_{i=3}^5\n{z_{0i}}_{Z_{i+}}+R^2\int_0^t\Big(1+\frac{1}{(t-\tau)^{3/4+2\theta}}+\frac{1}{(t-\tau)^{3/4}}+\frac{1}{(t-\tau)^{1/2}}\Big)\Big(1+\frac{1}{\tau^{2\theta}}\Big)d\tau\Big\}\\
&\leq C\Big\{\n{z_{01}^{\mu}}_{Z_1}+\n{z_{02}}_{Z_2}+\sum_{i=3}^5\n{z_{0i}}_{Z_{i+}}+R^2\int_0^t\frac{1}{(t-\tau)^{3/4+2\theta}\tau^{2\theta}}d\tau\Big\}\leq C(\n{z_{01}^{\mu}}_{Z_1}+\n{z_{02}}_{Z_2}\\
&+\sum_{i=3}^5\n{z_{0i}}_{Z_{i+}})+CR^2T^{1/4-4\theta}.
\end{align*}
Taking $R,T$ such that $R\geq \max\{1,2C(\n{z_{01}^{\mu}}_{Z_1}+\n{z_{02}}_{Z_2}+\sum_{i=3}^5\n{z_{0i}}_{Z_{i+}})\}$ and $T\leq\min\{1,(2CR)^{4/(16\theta-1)}\}$ we obtain 
\begin{align*}
t^{2\theta}\n{[\Phi_{1}^{\mu}(\bsym{z})](t)}_{Z_{1+}}+\n{[\Phi_{1}^{\mu}(\bsym{z})](t)}_{Z_1}+\sum_{i=2}^5\n{[\Phi_{i}^{\mu}(\bsym{z})](t)}_{Z_{i+}}\leq R/2+R/2=R
\end{align*}
which proves that $\bsym{\Phi}^{\mu}$ maps $\bsym{\mathcal{Z}}$ into itself. Using \eqref{H2E:blah2} we prove analogously that condition \eqref{H2E:contract} holds after making $T$ smaller if needed.
\end{proof}

We obtain from Lemma \ref{H2E:lemunifball} that the map $\bsym{\Phi}^{\mu}:\bsym{\mathcal{Z}}\to\bsym{\mathcal{Z}}$ satisfies, for certain $R,T$ which are independent of $\mu$, the assumptions of the contraction mapping principle.  We conclude that system \eqref{H2E:SystemApu} has for $\epsilon=0$ a unique maximally defined M-mild solution $\bsym{u}^{\mu_0}$ defined on $[0,T_{\max}^h)$, where $T^*:=\inf\{T_{\max}^{h}: h\in(0,1]\}>0$.

\textbf{Step 5 - For any fixed $h\in(0,1]$:  $\bsym{u}^{\mu}$ converges to $\bsym{u}^{\mu_0}$ as $\epsilon\to 0$. Moreover $T_{\max}^h=\infty$.}

Fix $T<T_{\max}^h$ and for $0<t<T$ denote: $f^{\mu}(t)=\sum_{i=1}^5\n{z_{i}^{\mu}(t)-z_{i}^{\mu_0}(t)}_{Z_i}$. Put
\begin{align*}
K_{1}^h(T)&=\sup_{\epsilon\in[0,1]}\n{t^{2\theta}z_{1}^{\mu}}_{L_{\infty}(0,T;Z_{1+})}, \ K_{i}^h(T)=\sup_{\epsilon\in[0,1]}\n{z_{i}^{\mu}}_{L_{\infty}(0,T;Z_{i+})}, \ i=2,3
\end{align*}
Observe that due to \eqref{H2E:M3},\eqref{H2E:M12} $K_{i}^h(T)$ are finite. Denote $\overline{K}^h(T)=\max\{K_{1}^h(T),K_{2}^h(T),K_{3}^h(T)\}$.
Using Lemma \ref{H2E:Gest}  we have for $0<t<T$
\begin{align*}
&\sum_{i=1}^2\n{G_{i}^{\mu}(\bsym{z}^{\mu}(t))-G_{i}^{\mu}(\bsym{z}^{\mu_0}(t))}_{Z_{i-}}+\sum_{i=3}^5\n{G_i(\bsym{z}^{\mu}(t))-G_i(\bsym{z}^{\mu_0}(t))}_{Z_i}\leq C(1+\overline{K}^h(T))\Big(1+\frac{1}{t^{2\theta}}\Big)f^{\mu}(t)\\
&\sum_{i=1}^2\n{G_{i}^{\mu}(\bsym{z}^{\mu_0}(t))-G_{i}^{\mu_0}(\bsym{z}^{\mu_0}(t))}_{Z_{i-}}\leq C(1+\overline{K}^h(T))\n{\eta^{\epsilon}-\delta}_{X^{-1/4-\theta}(I)}\\
&\n{G_3(\bsym{z}^{\mu_0}(t))}_{Z_{3+}}\leq C\Big(1+\frac{1}{t^{2\theta}}\Big)\Big(1+(\overline{K}^h(T))^2\Big)
\end{align*}
Thus owing to \eqref{H2E:Duhamelz} we estimate
\begin{align*}
&f^{\mu}(t)\leq\n{e^{tA_h}}_{\mathcal{L}(Z_1)}\n{z_{01}^{\mu}-z_{01}^{\mu_0}}_{Z_1}+\n{e^{-tTr(m^{\mu})}-e^{-tTr(m^{\mu_0})}}_{Z_3}\n{z_{03}}_{Z_{3+}}\\
&+\int_0^t\Big\{\n{e^{(t-\tau)A_h}}_{\mathcal{L}(Z_{1-},Z_1)}\Big(\n{G_{1}^{\mu}(\bsym{z}^{\mu}(\tau))-G_{1}^{\mu}(\bsym{z}^{\mu_0}(\tau))}_{Z_{1-}}+\n{G_{1}^{\mu}(\bsym{z}^{\mu_0}(\tau))-G_{1}^{\mu_0}(\bsym{z}^{\mu_0}(\tau))}_{Z_{1-}}\Big)\\
&+\n{e^{(t-\tau)dA_{0}}}_{\mathcal{L}(Z_{2-},Z_2)}\Big(\n{G_{2}^{\mu}(\bsym{z}^{\mu}(\tau))-G_{2}^{\mu}(\bsym{z}^{\mu_0}(\tau))}_{Z_{2-}}+\n{G_{2}^{\mu}(\bsym{z}^{\mu_0}(\tau))-G_{2}^{\mu_0}(\bsym{z}^{\mu_0}(\tau))}_{Z_{2-}}\Big)\\
&+\Big(\n{e^{-(t-\tau)Tr(m^{\mu})}}_{Z_{3+}}\n{G_3(\bsym{z}^{\mu}(\tau))-G_3(\bsym{z}^{\mu_0}(\tau))}_{Z_3}\\
&+\n{e^{-(t-\tau)Tr(m^{\mu})}-e^{-(t-\tau)Tr(m^{\mu_0})}}_{Z_3}\n{G_3(\bsym{z}^{\mu_0}(\tau))}_{Z_{3+}}\Big)+\sum_{i=4}^5\n{G_i(\bsym{z}^{\mu}(\tau))-G_i(\bsym{z}^{\mu_0}(\tau))}_{Z_i}\Big\}d\tau.
\end{align*}

Using Lemma \ref{H2E:sembasicest} and Lemma \eqref{H2E:swallow} we obtain
\begin{align*}
&f^{\mu}(t)\leq C\n{m^{\mu}-m^{\mu_0}}_{Z_1}+t\n{Tr(m^{\mu}-m^{\mu_0})}_{Z_3}\n{z_{03}}_{Z_{3+}}\\
&+\int_0^t\Big\{\Big(\n{e^{(t-\tau)A_h}}_{\mathcal{L}(Z_{1-},Z_1)}+\n{e^{(t-\tau)dA_{0}}}_{\mathcal{L}(Z_{2-},Z_2)}+\n{e^{-(t-\tau)Tr(m^{\mu})}}_{Z_{3+}}+1\Big)\Big(\sum_{i=1}^2\n{G_{i}^{\mu}(\bsym{z}^{\mu}(\tau))\\
&-G_{i}^{\mu}(\bsym{z}^{\mu_0}(\tau))}_{Z_{i-}}+\sum_{i=3}^5\n{G_i(\bsym{z}^{\mu}(\tau))-G_i(\bsym{z}^{\mu_0}(\tau))}_{Z_i}\Big)\Big\}d\tau\\
&+\int_0^t\Big\{\Big(\n{e^{(t-\tau)(A_h-b_1)}}_{\mathcal{L}(Z_{1-},Z_1)}+\n{e^{(t-\tau)dA_{0}}}_{\mathcal{L}(Z_{2-},Z_2)}\Big)\Big(\sum_{i=1}^2\n{G_{i}^{\mu}(\bsym{z}^{\mu_0}(\tau))-G_{i}^{\mu_0}(\bsym{z}^{\mu_0}(\tau))}_{Z_{i-}}\Big)\Big\}d\tau\\
&+\int_0^t\n{e^{-(t-\tau)Tr(m^{\mu})}-e^{-(t-\tau)Tr(m^{\mu_0})}}_{Z_3}\n{G_3(\bsym{z}^{\mu_0}(\tau))}_{Z_{3+}}d\tau\\
&\leq C(1+t)\n{\eta^{\epsilon}-\delta}_{X^{-1/4-\theta}(I)}+C(1+\overline{K}^h(T))\int_0^t\Big(1+\frac{1}{(t-\tau)^{3/4}}+\frac{1}{(t-\tau)^{1/2}}\Big)\Big(1+\frac{1}{\tau^{2\theta}}\Big)f^{\mu}(\tau)d\tau\\
&+C(1+\overline{K}^h(T))\n{\eta^{\epsilon}-\delta}_{X^{-1/4-\theta}(I)}\int_0^t\Big(\frac{1}{(t-\tau)^{3/4}}+\frac{1}{(t-\tau)^{1/2}}\Big)d\tau\\
&+C\Big(1+(\overline{K}^h(T))^2\Big)\n{\eta^{\epsilon}-\delta}_{X^{-1/4-\theta}(I)}\int_0^t(t-\tau)\Big(1+\frac{1}{\tau^{2\theta}}\Big)d\tau\\
&\leq a_h(T)\n{\eta^{\epsilon}-\delta}_{X^{-1/4-\theta}(I)}+b_h(T)\int_0^t\frac{f^{\mu}(\tau)}{(t-\tau)^{3/4}\tau^{2\theta}}d\tau.
\end{align*}
Using Lemma \ref{H2E:Gronlem}  (see \eqref{H2E:asssing2}) we get that
\begin{align*}
\n{f^{\mu}}_{L_{\infty}(0,T)}\leq a_h(T)\n{\eta^{\epsilon}-\delta}_{X^{-1/4-\theta}(I)}C\exp\Big(b_h(T)^{\frac{7/4+2\theta}{1/4-2\theta}}CT^{7/4+2\theta}\Big),
\end{align*}
from which we conclude that $\lim_{\epsilon\to0^+}\n{f^{\mu}}_{L_{\infty}(0,T)}=0$ for every $h\in(0,1], \ T<T_{max}^h$by Lemma \ref{H2E:deltacon}. In particular 
$\bsym{u}^{\mu_0}$ is nonnegative on $[0,T_{max}^h)$ and for every $T<T_{max}^h$
\begin{align*}
&\n{t^{2\theta}z_{1}^{\mu_0}}_{L_{\infty}(0,T;Z_{1+})}\leq M_1(T),\\
&\n{z_{2}^{\mu_0}}_{L_{\infty}(0,T;Z_{2+})}\leq M_2(T),\\
&\sum_{i=3}^5\n{z_{i}^{\mu_0}}_{L_{\infty}(0,T;Z_{i+})}\leq M_3,
\end{align*}
where $M_1(T),M_2(T)$ are defined in \eqref{H2E:M12} while $M_3$ is defined in \eqref{H2E:M3}.
We observe thus that $\bsym{z}^{\mu_0}$ does not blow-up in finite time in $\n{\cdot}_{\bsym{Z}_+}$. Using standard continuation argument we conclude that $T_{max}^h=\infty$ for any $h\in(0,1]$.

\subsection{Proof of Theorem \ref{H2E:Maintheorem2}}\label{H2E:ProofMain2}

Recall that $m^0$ is defined in \eqref{H2E:auxdef} while $u_{01}$ in \eqref{H2E:asssing3}. Denote 
\begin{align*}
z_{01}^0&=Pu_{01}-m^0,\\
g_{1}^0,g_{2}^0&: I\times\mathbb{R}^5\to\mathbb{R},\\
g_{1}^0(\boldsymbol{z})&=-c_1z_1+c_2z_2-z_1z_3+c_4(z_4-z_3)-(c_1+z_3)m^0,\\
g_{2}^0(\boldsymbol{z})&=-b_2z_2+c_1z_1-c_2z_2-c_3z_2z_3+c_5(z_5-z_4)+c_1m^0.
\end{align*}
For $\bsym{z}\in\bsym{Z}_+$ define
\begin{align*}
G_{1}^0(\bsym{z})&=Tr'(g_{1}^0(Tr(z_1),z_2,z_3,z_4,z_5)),\\
G_{2}^0(\bsym{z})&=g_{2}^0(Tr(z_1),z_2,z_3,z_4,z_5).
\end{align*}

Observe that since $\bsym{u}^0=(u_1^0,\ldots,u_5^0)$ solves \eqref{H2E:SystemApuAlim}, $\bsym{z}^0=(z_{1}^0,\ldots,z_{5}^0)=M(E(u_{1}^0-m^0),u_{2}^0,\ldots,u_{5}^0)$ satisfies the following Duhamel formulas:
\bs\label{H2E:Duhamelzinfty}
\eq{
z_{1}^0(t)&=E\Big\{e^{t(A_{0}-b_1)}z_{01}^0+\int_0^te^{(t-\tau)(A_0-b_1)}PG_{1}^0(\bsym{z}^0(\tau))d\tau\Big\},\\
z_{2}^0(t)&=e^{tdA_{0}}z_{02}+\int_0^te^{(t-\tau)dA_{0}}G_{2}^0(\bsym{z}^0(\tau))d\tau,\\
z_{3}^0(t)&=e^{-tm^0}z_{03}+\int_0^te^{-(t-\tau)m^0}G_3(\bsym{z}^0(\tau))d\tau,\\
z_{i}^0(t)&=z_{0i}+\int_0^tG_i(\bsym{z}^0(\tau))d\tau, \ i\in\{4,5\}.
}
\es
For $t<T<\infty$ denote 
\begin{align*}
N(T)&=\sup_{h\in(0,1]}\Big(\n{t^{2\theta}z_{1}^{\mu_0}}_{L_{\infty}(0,T;Z_{1+})}+\n{t^{2\theta}z_{1}^0}_{L_{\infty}(0,T;Z_{1+})}+\sum_{i=2}^3(\n{z_{i}^{\mu_0}}_{L_{\infty}(0,T;Z_{i+})}+\n{z_{i}^0}_{L_{\infty}(0,T;Z_{i+})})\Big),\\
f^{\mu_0}(t)&=t^{2\theta}\n{z_{1}^{\mu_0}(t)-z_{1}^{0}(t)}_{Z_{1+}}+\sum_{i=2}^5\n{z_{i}^{\mu_0}(t)-z_{i}^{0}(t)}_{Z_i}.
\end{align*}
Observe that $N(T)\leq M_1(T)+M_2(T)+M_3<\infty$ as was proved in Step 3 of Theorem \ref{H2E:Maintheorem1}.
Owing to Lemma \ref{H2E:Gest} and Lemma \ref{H2E:swallow} we have
\begin{align*}
&\sum_{i=1}^2\n{G_{i}^{\mu_0}(\bsym{z}^{\mu_0}(t))-G_{i}^{\mu_0}(\bsym{z}^{0}(t))}_{Z_{i-}}+\sum_{i=3}^5\n{G_i(\bsym{z}^{\mu_0}(t))-G_i(\bsym{z}^{0}(t))}_{Z_i}\leq C(1+N(T))\Big(1+\frac{1}{t^{2\theta}}\Big)f^{\mu_0}(t)\\
&\n{G_{1}^{0}(\bsym{z}^{0}(t))}_{Z_{1-}}+\n{G_3(\bsym{z}^{0}(t))}_{Z_{3+}}\leq C(1+(N(T))^2)\Big(1+\frac{1}{t^{2\theta}}\Big)\\
&\sum_{i=1}^2\n{G_{i}^{\mu_0}(\bsym{z}^{0}(t))-G_{i}^{0}(\bsym{z}^{0}(t))}_{Z_{i-}}\leq C(1+N(T))\frac{1}{|\lambda_{01,h}^{\Omega}|^{\theta/2}}.
\end{align*}

Since $\bsym{z}^{\mu_0}$ (resp. $\bsym{z}^{0}$) satisfies \eqref{H2E:Duhamelz} (resp. \eqref{H2E:Duhamelzinfty}) thus using \eqref{H2E:Identities} and Lemma \ref{H2E:Multi} we obtain

\begin{align*}
&f^{\mu_0}(t)\leq t^{2\theta}\n{e^{tA_h}(z_{01}^{\mu_0}-Ez_{01}^0)}_{Z_{1+}}+\n{e^{-tTr(m^{\mu_0})}-e^{-tm^0}}_{Z_3}\n{z_{03}}_{Z_{3+}}\\
&+\int_0^t\Big\{t^{2\theta}\n{e^{(t-\tau)A_h}\Big(G_{1}^{\mu_0}(\bsym{z}^{\mu_0}(\tau))-EPG_{1}^0(\bsym{z}^0(\tau))\Big)}_{Z_{1+}}\Big\}d\tau+\int_0^t\Big\{\n{e^{(t-\tau)dA_{0}}\Big(G_{2}^{\mu_0}(\bsym{z}^{\mu_0}(\tau))\\
&-G_{2}^{0}(\bsym{z}^0(\tau))\Big)}_{Z_{2}}\Big\}d\tau+\int_0^t\Big\{\n{e^{-(t-\tau)Tr(m^{\mu_0})}G_3(\bsym{z}^{\mu_0}(\tau))-e^{-(t-\tau)m^0}G_3(\bsym{z}^0(\tau))}_{Z_3}\Big\}d\tau\\
&+\sum_{i=4}^5\int_0^t\n{G_i(\bsym{z}^{\mu_0}(\tau))-G_i(\bsym{z}^0(\tau))}_{Z_i}d\tau\leq t^{2\theta}\n{e^{tA_h}(I-EP)u_{01}}_{Z_{1+}}+t^{2\theta}\n{e^{tA_h}}_{\mathcal{L}(Z_1,Z_{1+})}\n{m^{\mu_0}-Em^0}_{Z_1}\\
&+t\n{Tr(m^{\mu_0})-m^0}_{Z_3}\n{z_{03}}_{Z_{3+}}+T^{2\theta}\int_0^t\Big\{\n{e^{(t-\tau)A_h}}_{\mathcal{L}(Z_{1-},Z_{1+})}\Big(\n{G_{1}^{\mu_0}(\bsym{z}^{\mu_0}(\tau))-G_{1}^{\mu_0}(\bsym{z}^0(\tau))}_{Z_{1-}}\\
&+\n{G_{1}^{\mu_0}(\bsym{z}^0(\tau))-G_{1}^0(\bsym{z}^0(\tau))}_{Z_{1-}}\Big)\Big\}d\tau+T^{2\theta}\int_0^t\Big\{\n{e^{(t-\tau)A_h}(I-EP)G_{1}^0(\bsym{z}^0(\tau))}_{Z_{1+}}\Big\}d\tau\\
&+\int_0^t\Big\{\n{e^{(t-\tau)dA_0}}_{\mathcal{L}(Z_{2-},Z_{2})}\Big(\n{G_{2}^{\mu_0}(\bsym{z}^{\mu_0}(\tau))-G_{2}^{\mu_0}(\bsym{z}^0(\tau))}_{Z_{2-}}+\n{G_{2}^{\mu_0}(\bsym{z}^0(\tau))-G_{2}^0(\bsym{z}^0(\tau))}_{Z_{2-}}\Big)\Big\}d\tau\\
&+\int_0^t\Big\{\n{e^{-(t-\tau)Tr(m^{\mu_0})}}_{Z_{3+}}\n{G_3(\bsym{z}^{\mu_0}(\tau))-G_3(\bsym{z}^0(\tau))}_{Z_3}\\
&+\n{e^{-(t-\tau)Tr(m^{\mu_0})}-e^{-(t-\tau)m^0}}_{Z_3}\n{G_3(\bsym{z}^0(\tau))}_{Z_{3+}}\Big\}d\tau+\sum_{i=4}^5\int_0^t\n{G_i(\bsym{z}^{\mu_0}(\tau))-G_i(\bsym{z}^0(\tau))}_{Z_i}d\tau.
\end{align*}

Using Lemma \ref{H2E:sembasicest}, Lemma \ref{H2E:semestiron}, Lemma \ref{H2E:swallow}, Lemma \ref{H2E:lemiron} and Lemma \ref{H2E:ineq} we have
\begin{align*}
&f^{\mu_0}(t)\leq Ct^{2\theta}e^{t\lambda_{01,h}}\n{u_{01}}_{Z_{1+}}+C(1+t^{2\theta})\frac{1}{|\lambda_{01,h}^{\Omega}|^{\theta/2}}+Ct\frac{1}{|\lambda_{01,h}^{\Omega}|^{\theta/2}}\n{z_{03}}_{Z_{3+}}\\
&+CT^{2\theta}(1+N(T))\int_0^t\Big(1+\frac{1}{(t-\tau)^{3/4+2\theta}}\Big)\Big(\Big(1+\frac{1}{\tau^{2\theta}}\Big)f^{\mu_0}(\tau)+\frac{1}{|\lambda_{01,h}^{\Omega}|^{\theta/2}}\Big)d\tau\\
&+CT^{2\theta}(1+(N(T))^2)\int_0^t\Big(1+\frac{1}{(t-\tau)^{3/4+2\theta}}\Big)\Big(1+\frac{1}{\tau^{2\theta}}\Big)e^{(t-\tau)\lambda_{01,h}^{\Omega}}d\tau\\
&+C(1+N(T))\int_0^t\Big(1+\frac{1}{(t-\tau)^{1/2}}\Big)\Big(\Big(1+\frac{1}{\tau^{2\theta}}\Big)f^{\mu_0}(\tau)+\frac{1}{|\lambda_{01,h}^{\Omega}|^{\theta/2}}\Big)d\tau\\
&+C(1+N(T))\int_0^t\Big(1+\frac{1}{\tau^{2\theta}}\Big)f^{\mu_0}(\tau)+C(1+(N(T))^2)\frac{1}{|\lambda_{01,h}^{\Omega}|^{\theta/2}}\int_0^t(t-\tau)\Big(1+\frac{1}{\tau^{2\theta}}\Big)d\tau
\end{align*}
\begin{align*}
&\leq C(1+T)\Big(\frac{1}{|\lambda_{01,h}^{\Omega}|^{2\theta}}+\frac{1}{|\lambda_{01,h}^{\Omega}|^{\theta/2}}\Big)+C(1+T^{2\theta})(1+(N(T))^2)\Big\{\frac{1}{|\lambda_{01,h}^{\Omega}|^{\theta/2}}\int_0^t\Big(1+\frac{1}{(t-\tau)^{3/4+2\theta}}\\
&+\frac{1}{(t-\tau)^{1/2}}+(t-\tau)\Big(1+\frac{1}{\tau^{2\theta}}\Big)\Big)d\tau+\int_0^t\Big(1+\frac{1}{\tau^{3/4+2\theta}}\Big)\Big(1+\frac{1}{(t-\tau)^{2\theta}}\Big)e^{\tau\lambda_{01,h}^{\Omega}}d\tau\\
&+\int_0^t\Big(1+\frac{1}{(t-\tau)^{3/4+2\theta}}+\frac{1}{(t-\tau)^{1/2}}\Big)\Big(1+\frac{1}{\tau^{2\theta}}\Big)f^{\mu_0}(\tau)d\tau\Big\}\\
&\leq a(T)\Big(\frac{1}{|\lambda_{01,h}^{\Omega}|^{\theta/2}}+\frac{1}{|\lambda_{01,h}^{\Omega}|^{\frac{1/4-4\theta}{7/4+4\theta}}}\Big)+b(T)\int_0^t\frac{f^{\mu_0}(\tau)}{(t-\tau)^{3/4+2\theta}\tau^{2\theta}}d\tau.
\end{align*}

Using Lemma \eqref{H2E:Gronlem} (see \eqref{H2E:asssing2}) we get that
\begin{align*}
\n{f^{\mu_0}}_{L_{\infty}(0,T)}\leq a(T)\Big(\frac{1}{|\lambda_{01,h}^{\Omega}|^{\theta/2}}+\frac{1}{|\lambda_{01,h}^{\Omega}|^{\frac{1/4-4\theta}{7/4+4\theta}}}\Big)C\exp\Big(b(T)^{\frac{7/4+4\theta}{1/4-4\theta}}CT^{7/4+4\theta}\Big),
\end{align*}

from which we conclude that $\lim_{h\to0^+}\n{f^{\mu_0}}_{L_{\infty}(0,T)}=0$ since $|\lambda_{01,h}^{\Omega}|=(\pi/h)^2\to\infty$ as $h\to0$.

\section*{Acknowledgement}
The author would like to express his gratitude towards his PhD supervisors Philippe Lauren\c{c}ot and \\Dariusz Wrzosek for their constant encouragement and countless helpful remarks and towards his numerous colleagues for stimulating discussions. \\
The author was supported by the International Ph.D. Projects Programme of Foundation for Polish Science operated within the Innovative Economy Operational Programme 2007-2013 funded by European
Regional Development Fund (Ph.D. Programme: Mathematical Methods in Natural Sciences). \\
The article is supported by NCN grant no $2012/05/N/ST1/03115$.\\
This publication has been co-financed with the European Union funds by the European Social Fund.\\
Part of this research was carried out during the author's visit to the Institut de Math{\'e}matiques de Toulouse, Universit\'e Paul Sabatier, Toulouse III.

\end{document}